\numberwithin{equation}{section}
    \newtheorem{theo}{Theorem}\numberwithin{theo}{section}
    \newtheorem{coro}[theo]{Corollary}
    \newtheorem{lemma}[theo]{Lemma}
		\theoremstyle{remark}
    \def\N{\mathbb{N}}
    \def\E{\mathbb{E}}
    \def\0{{\bf 0}}
    \def\bdm{\begin{displaymath}}
    \newcommand{\edm}{\end{displaymath}}
    \def\benu{\begin{enumerate}}
    \def\eenu{\end{enumerate}}
    \def\beqn{\begin{equation}}
    \def\eeqn{\end{equation}}
    \def\be{\begin{equation}}
    \def\ee{\end{equation}}
    \def\bea{\begin{eqnarray}}
    \def\eea{\end{eqnarray}}
    \newcommand{\bean}{\begin{eqnarray*}}
    \newcommand{\eean}{\end{eqnarray*}}
    \newcommand{\bear}{\begin{eqnarray}}
    \newcommand{\eear}{\end{eqnarray}}
    \def\R{\mathbb{R}}
	\def\dint{\textup{d}}
    \def\qed{\hfill\hbox{${\vcenter{\vbox{
        \hrule height 0.4pt\hbox{\vrule width 0.4pt height 6pt
        \kern5pt\vrule width 0.4pt}\hrule height 0.4pt}}}$}}
\DeclareRobustCommand{\VAN}[3]{#2} 
\titleformat*{\section}{\normalfont\large\bfseries}
\titleformat*{\subsection}{\normalfont\bfseries}
\date{\vspace{-0.95cm}}
\begin{document}

\title{First passage percolation on {E}rd\H{o}s-{R}\'{e}nyi graphs with general weights}

\author{Fraser Daly\footnotemark[1], Matthias Schulte\footnotemark[2], and Seva Shneer\footnotemark[3]}


\maketitle

\footnotetext[1]{Heriot-Watt University, Edinburgh, UK. Email: 
    f.daly@hw.ac.uk}

\footnotetext[2]{Hamburg University of Technology, Germany. Email: matthias.schulte@tuhh.de}

\footnotetext[3]{Heriot-Watt University, Edinburgh, UK. Email: 
    v.shneer@hw.ac.uk}
		

\noindent{\bf Abstract} 
We consider first passage percolation on the Erd\H{o}s--R\'{e}nyi graph with $n$ vertices in which each pair of distinct vertices is connected independently by an edge with probability $\lambda/n$ for some $\lambda>1$. The edges of the graph are given non-negative i.i.d.\ weights with a non-degenerate distribution such that the probability of zero is not too large. We consider the paths with small total weight between two distinct typical vertices and analyse the joint behaviour of the numbers of edges on such paths, the so-called hopcounts, and the total weights of these paths. For $n\to\infty$, we show that, after a suitable transformation, the pairs of hopcounts and total weights of these paths converge in distribution to a Cox process, i.e., a Poisson process with a random intensity measure. The random intensity measure is controlled by two independent random variables, whose distribution is the solution of a distributional fixed point equation and is related to branching processes. For non-arithmetic and arithmetic edge-weight distributions we observe different behaviour. In particular, we derive the limiting distribution for the minimal total weight and the corresponding hopcount(s). Our results generalise earlier work of Bhamidi, van der Hofstad and Hooghiemstra, who assume that edge weights have an absolutely continuous distribution. The main tool we employ is the method of moments.
\vspace{12pt}

\noindent{\bf Key words and phrases:} {E}rd\H{o}s-{R}\'{e}nyi graph; first passage percolation; hopcount; total weight; Cox process; Poisson process; central limit theorem.

\vspace{12pt}

\noindent{\bf MSC 2020 subject classification:} 60F05; 05C80; 60C05; 60G55

\section{Introduction, notation and main results}

First passage percolation is a classical and fundamental topic in applied probability. The basic model, as appearing in the original work of Hammersley and Welsh \cite{hw65}, is a simple one: given an underlying connected graph with $n$ vertices, each edge is assigned an independent and identically distributed weight. For two distinct vertices chosen uniformly at random we then consider quantities such as the number of edges of the minimum weight path between these vertices (i.e., the hopcount) and the total weight of such a path. This was originally motivated by modelling fluid flow, passage of information or population growth in a random medium, in which applications the hopcount and minimal total weight are of clear importance; see, for example, the discussion of modelling internet traffic using first passage percolation on an Erd\H{o}s--R\'{e}nyi graph in \cite{hhm01}. First passage percolation has also found applications in the analysis of interacting particle systems \cite{d88,l85}, epidemic models \cite{bhk14} and the Vickrey--Clarke--Groves auction mechanism \cite{fgs11}, among other areas.

Following the original work of Hammersley and Welsh \cite{hw65}, much of the work on first passage percolation has been in the setting where the underlying graph is a multi-dimensional integer lattice (see, for example, the survey \cite{adh17} and references therein). Many other deterministic and random graph structures have also been analysed, including the hypercube \cite{fp93,KSS}, the complete graph \cite{Bhamidi08,bh12,eghn20,Janson99}, nilpotent Cayley graphs \cite{bt15}, Cartesian power graphs \cite{m18}, Erd\H{o}s--R\'{e}nyi graphs \cite{Bhamidi08,bhh11,hhm01}, inhomogeneous random graphs \cite{bhh14,kk15}, the configuration model \cite{bhh10,bhh10_err,bhh17}, small-world networks \cite{kv16} and spatial random graphs \cite{clhjv23,hdgs15}, among others. In many of these papers the edge weights are assumed to be exponentially distributed. This assumption allows for a technically more straightforward analysis, exploiting Markovian structure in the graph exploration process. Exceptions include the works of Bhamidi \emph{et al}$.$ \cite{bhh14,bhh17}, where the authors generalise their earlier work on the configuration model \cite{bhh10,bhh10_err} to allow general non-negative weights with an absolutely continuous distribution. As noted by Bhamidi \emph{et al}$.$, in many applications the edge-weight distribution is unknown and so this is an important relaxation.      

Our focus in this paper is on the Erd\H{o}s--R\'{e}nyi graph. We allow for general non-negative i.i.d.\ edge weights with a non-degenerate distribution, which does not need to have a density, and derive the joint limiting distribution of the hopcounts and total weights of paths with small total weight between two distinct uniformly chosen vertices as $n\to\infty$, thereby generalising Theorems 2.1 and 2.2 of \cite{bhh11} and Theorem 3.3 of \cite{bhh14}. We use the remainder of this section to more formally introduce the model with which we are working, before stating our main results (Theorems \ref{thm:main} and \ref{thm:shortest_path} below), whose proofs are given in Section \ref{sec:proof}.

For a fixed $\lambda>1$ consider the supercritical Erd\H{o}s--R\'{e}nyi graph $G(n,\lambda/n)$ with vertex set $[n]=\{1,\ldots,n\}$ with $n\in\mathbb{N}$ and $n\geq \lambda$, and where each pair of distinct vertices is connected independently by an edge with probability $\lambda/n$. To each of the edges we associate a random weight. We assume throughout that these edge weights are independent and identically distributed, almost surely non-negative, and not almost surely constant. Weights are assigned independently of the underlying Erd\H{o}s--R\'enyi graph. We denote the distribution of a weight by $F$ and let $X$ be a random variable with this distribution. 

The idea of first passage percolation is to study the paths with small total weight between two distinct typical vertices, where typical means that the two vertices are chosen uniformly and independently from all pairs of distinct vertices. By the symmetry properties of the Erd\H{o}s--R\'{e}nyi graph, it does not make a difference whether we choose two vertices at random or whether we take two fixed vertices. Therefore, we always consider paths between the vertices $1$ and $n$ in the following. We require that the vertices of a path are distinct. To this end, we let $\mathcal{P}_1^n$ be the collection of paths in $G(n,\lambda/n)$ between vertices $1$ and $n$. For a path $p \in \mathcal{P}_1^n$ we let $H(p)$ denote its hopcount (i.e., the total number of edges) and write $L(p)$ for its total weight (i.e., the sum of the weights of all involved edges). We note that the notation $L$ is traditionally used in the literature and refers to length of paths. We keep the notation for ease of comparison but refer to the relevant characteristic as total weight as we find that length may in principle refer to either the total weight or the total number of edges. In the work that follows we will study the collection $(H(p),L(p))_{p\in\mathcal{P}_1^n}$ of pairs of hopcounts and total weights of paths from $1$ to $n$. Formally one can think of them as a point process in $\mathbb{R}^2$. We will show that this point process converges after a suitable transformation to a Cox process as $n\to\infty$. This describes the limiting joint behaviour of all hopcounts and total weights of paths with small total weight.

Note that we require the distribution of the weights to be non-degenerate, i.e., that the weights are not almost surely constant. For such degenerate weights, the total weight is a multiple of the hopcount. The limiting distribution of the graph distance between two typical vertices of the Erd\H{o}s--R\'{e}nyi graph, which is the minimal hopcount, follows from the results of \cite{EskerHofstadHooghiemstra}. Our findings do not provide any information about paths with small hopcounts since we study paths with a small total weight and only take the hopcounts of such paths into account.

In this paper we assume for the typical weight $X$ that
\begin{equation}\label{eqn:probability_weight_zero}
\lambda\mathbb{P}(X=0)<1.
\end{equation}
This condition ensures that there are not too many edges with weight zero. Since the probability that an edge is present in $G(n,\lambda/n)$ and has weight zero is given by $\lambda\mathbb{P}(X=0)/n$, assumption \eqref{eqn:probability_weight_zero} implies that the subgraph of $G(n,\lambda/n)$ of edges with weight zero is a subcritical Erd\H{o}s--R\'{e}nyi graph.

Let also
$$
R(s) = \E\left[e^{-s X}\right],
$$
which is clearly defined for all $s \ge 0$, strictly decreasing, and such that $R(0) = 1$ and $R(\infty)=\mathbb{P}(X=0)$. Therefore, as $\lambda>1$ and using \eqref{eqn:probability_weight_zero}, there exists a unique $\alpha > 0$ that solves
\begin{equation}\label{eq:def_alpha} 
\lambda R(\alpha) = 1.
\end{equation}
With this $\alpha$, we can define a distribution $\tilde{F}$ via
\begin{equation}\label{eqn:F_tilde}
d\tilde{F}(x) = \lambda e^{-\alpha x} dF(x).
\end{equation}
Note here that $\tilde{F}$ is guaranteed to have finite exponential moments for parameters not exceeding $\alpha$ and hence, in particular, has all power moments. We will also use $\tilde{X}$ to denote a generic random variable with distribution $\tilde{F}$. Define further
\begin{equation} \label{eq:def_beta_gamma}
\beta = \frac{\bar{\sigma}^2}{\alpha \bar{\nu}^3} \quad \text{and} \quad \gamma = \frac{1}{\alpha \bar{\nu}},
\end{equation}
with $\bar{\nu} = \E[\tilde{X}]$ and $\bar{\sigma}^2 = \operatorname{Var}(\tilde{X})$.

We will obtain different results depending on whether the distribution of $X$ is non-arithmetic or arithmetic. Recall that the random variable $X$ has an arithmetic distribution with span $M$ if $\mathbb{P}(X\in M\mathbb{Z})=1$, where $M\mathbb{Z}=\{Mz:z\in\mathbb{Z}\}$, and $M$ is the largest positive real number $l>0$ such that $\mathbb{P}(X\in l\mathbb{Z})=1$. In order to study both cases simultaneously, we let 
\begin{equation}\label{eq:def_rho}
\varrho_n= \begin{cases} \frac{1}{\alpha} \log(n), & \quad \text{$X$ is non-arithmetic},\\ M\lfloor \log(n)/(M\alpha)\rfloor, & \quad \text{$X$ is arithmetic with span $M$}, \end{cases}
\end{equation}
for $n\in\mathbb{N}$. Here and throughout the rest of the paper we use the notation $\lfloor y\rfloor = \max\{z\in\mathbb{Z}: z\leq y\}$ for $y\in\mathbb{R}$.

While one can think of a point process as an at most countable collection of random points, formally we mean by a point process a random locally finite counting measure on $\mathbb{R}^d$ ($\mathbb{R}^d$ with $d\in\{1,2\}$ as an underlying space is sufficient for this paper). For $y\in\R^d$ we denote by $\delta_y$ the Dirac measure concentrated at $y$. In the following we consider the point processes
\begin{equation}\label{eq:def_psi_n}
\Psi_n = \sum_{p \in \mathcal{P}_1^n} \delta_{\left(\frac{H(p) - \gamma \log(n)}{\sqrt{\beta \log(n)}}, L(p) - \varrho_n \right)}
\end{equation}
for $n\in\mathbb{N}$ with $n\geq\lambda$. This means that $\Psi_n$ is the collection of all pairs of suitably transformed hopcounts and total weights of paths from vertex $1$ to vertex $n$ in $G(n,\lambda/n)$.

In our first main result we will show the convergence in distribution of $(\Psi_n)_{n\in\mathbb{N}}$ to a Cox process, which we introduce in the following. For $a\geq 0$ we denote by $\operatorname{Poisson}(a)$ a Poisson distribution with parameter $a$. We use the convention that $\operatorname{Poisson}(\infty)$ is the distribution concentrated at $\infty$ and write $\sim$ to indicate that a random object follows a distribution. We say that a point process $\Xi$ on $\mathbb{R}^d$ is a Poisson process with a locally finite intensity measure $\Lambda$ if $\Xi(A)\sim\operatorname{Poisson}(\Lambda(A))$ for all $A\in\mathcal{B}(\mathbb{R}^d)$ and $\Xi(A_1),\hdots,\Xi(A_k)$ are independent for all pairwise disjoint $A_1,\hdots,A_k\in\mathcal{B}(\mathbb{R}^d)$ with $k\in\mathbb{N}$. In this case we write $\Xi\sim\operatorname{Poisson}(\Lambda)$. One can think of Cox processes as a generalisation of Poisson processes, where the intensity measures are random. Let $\Theta$ be a random locally finite measure on $\mathbb{R}^d$. A point process $\Psi$ on $\mathbb{R}^d$ is called a Cox process directed by $\Theta$ if the conditional distribution of $\Psi$ given $\Theta$ is almost surely a Poisson process with intensity measure $\Theta$. Thus, for a Cox process $\Psi$ directed by the random measure $\Theta$ we use the notation $\Psi\sim\operatorname{Poisson}(\Theta)$.  

In order to provide the underlying random measure of the limiting Cox process of $\Psi_n$ as $n\to\infty$, we need a particular distribution. We denote by $W$ a random variable with $\mathbb{E}[W]=1$ satisfying the distributional fixed point equation
\begin{equation}\label{eqn:definition_W}
W\overset{d}{=} \sum_{i=1}^D e^{-\alpha X_i} W_i,
\end{equation}
where $\overset{d}{=}$ stands for equality in distribution, $(W_i)_{i\in\mathbb{N}}$ are i.i.d.\ copies of $W$, $D\sim\operatorname{Poisson}(\lambda)$, and $(X_i)_{i\in\mathbb{N}}$ are i.i.d.\ copies of $X$ such that $(W_i)_{i\in\mathbb{N}}$, $D$ and $(X_i)_{i\in\mathbb{N}}$ are independent. Note that the distribution of $W$ is unique and that $W$ is non-negative (see Lemma \ref{lem:exp_moments_W} and \cite{Roesler1992}).

We denote by $\overset{d}{\longrightarrow}$ convergence in distribution. For more details on convergence in distribution of point processes with respect to the vague topology we refer the reader to e.g.\ Chapter 23 of \cite{k21}. We are now able to state our first main result.

\begin{theo}\label{thm:main}
Assume that \eqref{eqn:probability_weight_zero} is satisfied. Let  $\mathbb{P}_{N}$ be the distribution of a standard Gaussian random variable $N$, and let $W_1$ and $W_2$ be independent copies of $W$.
\begin{itemize}
\item [a)] Let $X$ be non-arithmetic. Then,
$$
\Psi_n \overset{d}{\longrightarrow} \operatorname{Poisson}(W_1 W_2\mathbb{P}_{N}\otimes K_\mathbb{R}) \quad \text{as} \quad n\to\infty,
$$
where $K_{\mathbb{R}}$ is the measure on $\mathbb{R}$ with the density $u\mapsto \frac{1}{\bar{\nu}} e^{\alpha u}$.
\item [b)] Assume that $X$ is arithmetic with span $M$ and let $(n_m)_{m\in\mathbb{N}}$ be a strictly increasing sequence in $\mathbb{N}$ such that $\lim_{m\to\infty} M\lfloor \log(n_m)/(M\alpha) \rfloor - \log(n_m)/\alpha =\vartheta$. Then,
$$
\Psi_{n_m} \overset{d}{\longrightarrow} \operatorname{Poisson}(W_1 W_2e^{\alpha \vartheta} \mathbb{P}_{N}\otimes K_{M\mathbb{Z}}) \quad \text{as} \quad m\to\infty
$$
with
$$
K_{M\mathbb{Z}} = \frac{M}{\bar{\nu}} \sum_{j\in\mathbb{Z}} e^{\alpha j M} \delta_{jM}.
$$
\end{itemize}
\end{theo}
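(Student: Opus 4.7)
The plan is to use the method of moments for point processes. For a Cox process $\Psi\sim\operatorname{Poisson}(W_1 W_2\mu)$ with deterministic $\sigma$-finite $\mu$ and $W_1,W_2$ independent, the $k$-th mixed factorial moment on pairwise disjoint bounded sets $B_1,\dots,B_k$ equals $\mathbb{E}[(W_1W_2)^k]\prod_{i=1}^k \mu(B_i)$. Since Lemma \ref{lem:exp_moments_W} guarantees that $W$ has finite exponential moments, these factorial moments grow sub-factorially and hence, by a classical moment-determinacy argument for point processes, characterize the Cox distribution uniquely. It will therefore suffice to prove that the analogous factorial moments of $\Psi_n$ on bounded rectangles converge to those of the candidate limit, together with vague tightness on compact subsets of $\mathbb{R}^2$, the latter being automatic from the convergence of $\mathbb{E}[\Psi_n(B)]$.

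The main combinatorial step I would carry out is to write the $k$-th mixed factorial moment of $\Psi_n$ on pairwise disjoint rectangles as the expected number of ordered tuples $(p_1,\dots,p_k)$ of \emph{distinct} paths in $G(n,\lambda/n)$ from $1$ to $n$ whose scaled coordinates $(\tilde H(p_i),\tilde L(p_i))$ introduced in \eqref{eq:def_psi_n} lie in $B_i$. Enumerating paths explicitly and using the exponential tilt $\lambda\,dF(x)=e^{\alpha x}\,d\tilde F(x)$, each factor $\lambda^{H(p_i)}\,dF^{\otimes H(p_i)}$ becomes $e^{\alpha L(p_i)}\,d\tilde F^{\otimes H(p_i)}$. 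Under $\tilde F$ the edge weights have mean $\bar\nu$ and variance $\bar\sigma^2$, so the classical CLT for sums of i.i.d.\ $\tilde X$-variables supplies a Gaussian factor with mean $\gamma\log n$ and variance $\beta\log n$, producing $\mathbb{P}_N$ in the limit. The factor $e^{\alpha L(p_i)}/n$ (with the $1/n$ coming from the restriction that the path terminates at vertex $n$), evaluated at $L(p_i)=\varrho_n+u$, equals $e^{\alpha u}$ in the non-arithmetic case, and the key renewal theorem applied to sums of $\tilde X$-variables at level $\varrho_n+u$ produces the density $1/\bar\nu$; together these yield $K_\mathbb{R}(du)=e^{\alpha u}/\bar\nu\,du$. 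In the arithmetic case the lattice version of Blackwell's theorem and the definition of $\varrho_n$ give $K_{M\mathbb{Z}}$, with the extra factor $e^{\alpha\vartheta}$ arising from the asymptotic mismatch between $\varrho_{n_m}$ and $\log(n_m)/\alpha$.

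The random $W_1 W_2$ factor would then be obtained by a two-ended branching-process approximation. Truncating each path $p_i$ near its midpoint, the initial segments rooted at vertex $1$ up to depth roughly $\gamma\log(n)/2$ can be coupled with rays in a Galton-Watson tree with offspring law $\operatorname{Poisson}(\lambda)$ and i.i.d.\ edge weights $\sim F$; independently and analogously for vertex $n$. The exponentially-tilted population sum $\sum_{v\in\text{gen }m} e^{-\alpha L(v)}$ is a non-negative martingale precisely because $\lambda R(\alpha)=1$, and by Lemma \ref{lem:exp_moments_W} converges in $L^p$ for all $p\geq 1$ to the copies $W_1$ and $W_2$; summing over the initial and terminal segments of the $k$-tuple will yield $\mathbb{E}[W_1^k]\,\mathbb{E}[W_2^k]=\mathbb{E}[(W_1W_2)^k]$ by independence, and combines with the previous factors to give the total intensity $W_1 W_2(\mathbb{P}_N\otimes K_\mathbb{R})$. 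The hard part will be quantitative: showing that the dominant contribution to the factorial moments comes from $k$-tuples of paths that are pairwise vertex-disjoint except at the endpoints (other configurations being lower order, controllable via second-moment bounds and the subcriticality of the zero-weight subgraph ensured by \eqref{eqn:probability_weight_zero}), and combining a local CLT for the hopcount with Blackwell's renewal theorem uniformly over the random heights and weights of the sub-trees. Once this is done, factorial-moment convergence together with moment determinacy will complete the proof.
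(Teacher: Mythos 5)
Your high-level skeleton — method of moments, exponential tilting by $\alpha$, a CLT for sums of $\tilde F$-variables supplying the Gaussian first coordinate, and a (lattice or non-lattice) renewal theorem supplying the second coordinate intensity $K_\mathbb{R}$ or $K_{M\mathbb{Z}}$ with the $e^{\alpha\vartheta}$ offset — matches the paper's Lemmas 2.5 and 2.6 quite closely. The real divergence is in how you propose to extract the $W_1 W_2$ factor, and this is where I'd caution you. You plan a two-ended coupling with a $\operatorname{Poisson}(\lambda)$ Galton--Watson tree and then use the additive martingale $\sum_v e^{-\alpha L(v)}$ to manufacture $W$. That is precisely the route of the existing literature (\cite{bhh11,bhh14,bhh17,kk15}), which the paper \emph{deliberately avoids}: the branching-process coupling requires regularity of the weight law (a density, in \cite{bhh14,bhh17}) to control, e.g., the conditioning when one reveals the exploration process. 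The paper instead never couples anything: it restricts to $(a,w)$-uncrossed paths, observes that the shared edges near vertex $1$ and near vertex $n$ form two marked trees $T',T''\in\mathcal T_r$, defines the combinatorial quantities $M^{(r)}=\sum_{T\in\mathcal T_r}\lambda^{|T|-1}\E[e^{-\alpha\sum_I|I|S_I}]$, and checks (Lemmas 2.2 and 2.8) that $M^{(r)}$ and $\E[W^r]$ satisfy the same recursion, hence are equal. No coupling, no exploration process, and all moments are computed by explicit path enumeration. This is what lets the result hold for any non-degenerate weight law satisfying $\lambda\PP(X=0)<1$.

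There is also an internal inconsistency in your sketch worth flagging. You first say the initial segments of $p_1,\dots,p_k$ are coupled with \emph{rays in a Galton--Watson tree} — rays that necessarily share edges near the root — but later assert that the dominant contribution comes from tuples that are \emph{pairwise vertex-disjoint except at the endpoints}. Taken literally, the second claim would give $r$-th factorial moment $\Lambda(A)^r$, losing the entire $(\E[W^r])^2$ factor. The whole point is that the leading-order tuples \emph{do} share non-trivial tree-shaped segments near $1$ and near $n$; what is required (and is the content of Lemmas 2.7, 2.10 and 2.11) is that they do not \emph{cross}, i.e., any maximal joint segment of two of the paths contains either $1$ or $n$. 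You need to replace "pairwise vertex-disjoint except at the endpoints" by "non-crossing" (or the paper's "$(a,w)$-uncrossed") and then actually account for the combinatorics of the shared trees; otherwise the $W_1 W_2$ factor cannot appear. Finally, a small point on moment determinacy: with $\E[W^r]\le Cr!/t^r$ the $r$-th factorial moment of $\Psi(A)$ behaves like $(r!)^2$, which is \emph{not} sub-factorial; the distribution is still moment-determinate, but one must invoke Carleman's condition as in Lemma 2.4 rather than a sub-factorial growth bound.
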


Note that the findings for the non-arithmetic and the arithmetic cases in Theorem \ref{thm:main} are consistent since $K_{M\mathbb{Z}}\overset{v}{\longrightarrow}K_{\mathbb{R}}$ and $\vartheta\to 0$ as $M\to 0$, where $\overset{v}{\longrightarrow}$ stands for vague convergence.

We denote by $\varnothing$ the null measure on $\mathbb{R}^2$, which corresponds to the empty point configuration. One has $\Psi_n\neq \varnothing$ if and only if $1$ and $n$ are connected in $G(n,\lambda/n)$ (i.e., there exists at least one path from $1$ to $n$). In order to derive the limit of the probability of this event, recall that the size of the largest component $\mathscr{C}_{\max}$ of $G(n,\lambda/n)$ behaves as $(1-\eta_\lambda) n$ as $n\to\infty$, where $\eta_\lambda$ is the extinction probability of a discrete-time Poisson branching process with mean offspring $\lambda$, and that all other components are much smaller (see, e.g., \cite[Theorem 4.8]{hof16}). Thus, we have
$$
\lim_{n\to\infty}\mathbb{P}(\Psi_n\neq\varnothing) = \lim_{n\to\infty}\mathbb{P}(1,n\in\mathscr{C}_{\max}) = (1-\eta_{\lambda})^2.
$$
Note that $\eta_\lambda$ is the unique solution in $[0,1)$ of the fixed point equation $s=G_\lambda(s)$, where $G_\lambda$ is the probability generating function of a $\operatorname{Poisson}(\lambda)$-distributed random variable. Since $\mathbb{P}(W=0)\in[0,1)$ satisfies $\mathbb{P}(W=0)=G_\lambda(\mathbb{P}(W=0))$ by \eqref{eqn:definition_W}, we have $\mathbb{P}(W=0)=\eta_\lambda$, whence
\begin{equation}\label{eqn:probability_empty_point_process}
\lim_{n\to\infty}\mathbb{P}(\Psi_n\neq\varnothing) = \mathbb{P}(W>0)^2 = \mathbb{P}(W_1,W_2>0).
\end{equation}
Moreover, the limiting Cox processes in Theorem \ref{thm:main} are different from $\varnothing$ if and only if $W_1,W_2>0$. Thus, the statements of Theorem \ref{thm:main} remain valid if we condition on the event that $1$ and $n$ are connected on the left-hand sides and on $W_1,W_2>0$ on the right-hand sides. 

Theorem \ref{thm:main} allows us to study the minimum total weight of a path from $1$ to $n$, which we denote by $L_{\min}$. As the path with the smallest total weight does not need to be unique, we let $P_{\min}$ be the number of paths with total weight $L_{\min}$ and let $H_{\min}^{(1)},\hdots,H_{\min}^{(P_{\min})}$ be their hopcounts ordered according to size, with $H^{(1)}_{\min}$ being the smallest one. We also define $H_{\min}=H_{\min}^{(1)}$. We note that $P_{\min}=0$ if and only if $1$ and $n$ are not connected and use the convention $H_{\min}=L_{\min}=\infty$ in this case.

\begin{theo}\label{thm:shortest_path}
Assume that \eqref{eqn:probability_weight_zero} is satisfied, and let $W_1$ and $W_2$ be independent copies of $W$.
\begin{itemize}
\item [a)] Let $X$ be non-arithmetic. Then, $\lim_{n\to\infty} \mathbb{P}(P_{\min}\in \{0,1\})=1$ and, for $z,u\in\mathbb{R}$,
\begin{multline*}
\lim_{n\to\infty}\mathbb{P}\left( \frac{H_{\min}-\gamma\log(n)}{\sqrt{\beta\log(n)}} \leq z, L_{\min} \leq \frac{1}{\alpha} \log(n) + u \right) \\
= \Phi(z) \left( 1 - \mathbb{E}\left[ \exp\left(-\frac{W_1W_2 e^{\alpha u}}{\alpha\bar{\nu}}\right) \right] \right),
\end{multline*}
where $\Phi$ is the distribution function of the standard Gaussian distribution.
\item [b)] Assume that $X$ is arithmetic with span $M$ and let $(n_m)_{m\in\mathbb{N}}$ be a strictly increasing sequence in $\mathbb{N}$ such that $\lim_{m\to\infty} M\lfloor \log(n_m)/(M \alpha) \rfloor - \log(n_m)/\alpha =\vartheta$. Then, for $u\in M\mathbb{Z}$, $k\in\mathbb{N}$ and $z_1,\hdots,z_k\in\mathbb{R}$,
\begin{align*}
& \lim_{m\to\infty} \mathbb{P}\left( L_{\min} =M\left\lfloor\frac{\log(n_m)}{M\alpha} \right\rfloor + u, P_{\min}=k, \frac{H^{(i)}_{\min}-\gamma\log(n_m)}{\sqrt{\beta\log(n_m)}} \leq z_i \text{ for all } i\in[k] \right) \\
& = \mathbb{E}\left[ \left( 1 - \exp\left(-\frac{W^\prime e^{\alpha (\vartheta+u)}}{\bar{\nu}(e^{\alpha M}-1)}\right)\right) \frac{ (W^\prime e^{\alpha (\vartheta+u)})^k }{\bar{\nu}^k} \exp\left(-\frac{W^\prime e^{\alpha(\vartheta+u)}}{\bar{\nu}}\right) \right] \\
& \quad \times  \mathbb{P}(N_{(1)}\leq z_1,\hdots,N_{(k)}\leq z_k),
\end{align*}
where $W^\prime=MW_1W_2$ and $N_{(1)}\leq \hdots \leq N_{(k)}$ are $k$ independent standard Gaussian random variables ordered by size.
\end{itemize}
\end{theo}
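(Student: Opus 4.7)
The core observation is that all quantities in the statement are functionals of the point process $\Psi_n$: writing $\tilde{L}(p) = L(p) - \varrho_n$ and $\tilde{H}(p) = (H(p)-\gamma\log n)/\sqrt{\beta\log n}$ so that $\Psi_n = \sum_{p\in\mathcal{P}_1^n} \delta_{(\tilde{H}(p),\tilde{L}(p))}$, the value $L_{\min}-\varrho_n$ is the minimum second coordinate of an atom of $\Psi_n$, $P_{\min}$ is the multiplicity of this minimum, and the rescaled $H^{(i)}_{\min}$'s are the order statistics of the first coordinates of the minimising atoms. The plan is to evaluate these functionals on the limit Cox process supplied by Theorem~\ref{thm:main}.

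For part (a), $K_{\mathbb{R}}$ is non-atomic with $K_{\mathbb{R}}((-\infty,u]) = e^{\alpha u}/(\alpha\bar{\nu})$, so conditional on $(W_1,W_2)$ the number of atoms of the limit process in $\mathbb{R}\times(-\infty,u]$ is Poisson with mean $W_1 W_2 e^{\alpha u}/(\alpha\bar{\nu})$. Almost sure uniqueness of the minimiser (since $K_{\mathbb{R}}$ is non-atomic) yields $\mathbb{P}(P_{\min}\in\{0,1\})\to 1$, provided one can show that ties among the finite-$n$ total weights $L(p)$ are asymptotically negligible; this should use non-arithmeticity of $X$ together with $\lambda\mathbb{P}(X=0)<1$, which keeps the zero-weight subgraph subcritical. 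The product structure $\mathbb{P}_N\otimes K_{\mathbb{R}}$ then forces the first coordinate of the minimising atom to be standard Gaussian, independent of $\tilde{L}_{\min}$ and $(W_1,W_2)$, giving the product form $\Phi(z)\cdot(1-\mathbb{E}[\exp(-W_1W_2 e^{\alpha u}/(\alpha\bar{\nu}))])$ after combining with the void probability on $\mathbb{R}\times(-\infty,u]$.

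For part (b), $K_{M\mathbb{Z}}$ is atomic with masses $(M/\bar{\nu})e^{\alpha jM}$ at $jM$; conditional on $(W_1,W_2)$ the counts of atoms of the limit Cox process at distinct levels are independent Poisson with parameters $\mu_j = W_1 W_2 e^{\alpha\vartheta}(M/\bar{\nu})e^{\alpha jM}$. The event $\{L_{\min}=\varrho_{n_m}+u,\,P_{\min}=k\}$ then translates into no atoms at levels $jM<u$ together with exactly $k\ge 1$ atoms at level $u$; summing the geometric series $\sum_{jM<u}e^{\alpha jM}=e^{\alpha u}/(e^{\alpha M}-1)$ produces the exponential factor involving $e^{\alpha M}-1$. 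Given $k$ atoms at level $u$, their first coordinates are i.i.d.\ standard Gaussian by the product structure of the intensity, so their order statistics realise $N_{(1)}\le\hdots\le N_{(k)}$, independent of $(W_1,W_2)$ and of the level counts.

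The main technical obstacle is that $\tilde{L}_{\min}$, $P_{\min}$, and the ordered $\tilde{H}^{(i)}_{\min}$ are not continuous with respect to vague convergence on a non-compact region such as $\mathbb{R}\times(-\infty,u]$, so the continuous mapping theorem does not apply off the shelf. I would handle this by truncation: apply Theorem~\ref{thm:main} on the relatively compact window $[-T,T]\times(-\infty,u]$, which is almost surely a continuity set of the limit Cox process, and then let $T\to\infty$. The uniform-in-$n$ negligibility of atoms with $|\tilde{H}(p)|>T$ and $\tilde{L}(p)\le u$ should follow from a first-moment bound on the expected number of such paths, established in the spirit of the moment calculations underlying Theorem~\ref{thm:main}, combined with Gaussian tail control on $\tilde{H}$.
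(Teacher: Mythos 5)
Your reduction of the problem is the same as the paper's: view $(L_{\min},P_{\min},H^{(i)}_{\min})$ as functionals of $\Psi_n$, apply Theorem~\ref{thm:main}, and transfer the limit via a continuous-mapping argument after truncation. The Cox-process calculations you sketch (void probability on $\mathbb{R}\times(-\infty,u]$, independence of the Gaussian first coordinate from the second coordinate and from $(W_1,W_2)$ by the product form of the intensity, geometric series in the arithmetic case) are also in the right direction. However, your truncation step has a genuine gap: the window $[-T,T]\times(-\infty,u]$ is not relatively compact, so $l_{\min}$, $p_{\min}$ and the ordered first coordinates remain discontinuous with respect to vague convergence even after restricting to it. The example $\chi_n=\delta_{(0,0)}+\delta_{(1,-n)}\to\delta_{(0,0)}$ with $l_{\min}(\chi_n)=-n\not\to 0$ already lives inside a strip $[-T,T]\times(-\infty,u]$, so your window does not rule out escape of the second coordinate to $-\infty$. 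You need to truncate the second coordinate from below as well; the paper uses $Q(u)=[-u^2,u^2]\times[-u,u]$ and then sends $u\to\infty$.

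Two further points you omit. First, beyond the first-moment control of the escape event $\{\Psi_n(\mathbb{R}\times(-\infty,u]\setminus Q(u))>0\}$ (which is precisely \eqref{eqn:limit_first_moment_II}), the paper must also handle $\{\Psi_n(Q(u))=0,\,\Psi_n\neq\varnothing\}$: if the window is empty but the process is not, the truncated functionals do not determine the true minimiser. This is controlled via Theorem~\ref{thm:main} and $\Lambda(Q(u))\to\infty$ as $u\to\infty$; your plan addresses only the escape term. Second, for $\lim_n\mathbb{P}(P_{\min}\in\{0,1\})=1$ you propose to show directly that ties among the prelimit weights are negligible. This is not needed: the paper simply applies the continuous mapping theorem at configurations with no two atoms at the same second coordinate (an a.s.\ property of $\Psi_{\mathbb{R}^2}$ by non-atomicity of $K_{\mathbb{R}}$) and reads off $\mathbb{P}(p_{\min}(\Psi_{\mathbb{R}^2})\in\{0,1\})=1$; a direct tie-negligibility argument for the finite-$n$ process would be harder and is an unnecessary detour. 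Likewise, in the arithmetic case you should note that $h$ is not vaguely continuous because several atoms share the same second coordinate; the paper replaces $h$ by the functional $\tilde h$ collecting points within $M/3$ of the minimal level, which is continuous at the limit.
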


Again the statements of Theorem \ref{thm:shortest_path} remain valid if we condition on $H_{\min},L_{\min}<\infty$ on the left-hand side and on $W_1,W_2>0$ on the right-hand side. In this case $L_{\min}$ and $H_{\min}$ become asymptotically independent, as we would expect given other results for first passage percolation on random graphs; see, for example, \cite{bghk15,bhh14,bhh17}.

First passage percolation on the Erd\H{o}s-R\'enyi graph was considered in \cite{bhh11} for the case that $X$ follows an exponential distribution with parameter one. We note that this was later generalised to inhomogeneous random graphs in \cite{kk15}. In \cite{bhh11} the joint convergence of $(H_{\min},L_{\min})$ after a suitable transformation is shown, conditional on $1$ and $n$ being connected. This result can be also obtained from the conditional version of Theorem \ref{thm:shortest_path} a) discussed above. Our normalisations of the random variables and the limiting distribution of $H_{\min}$ are exactly as in \cite{bhh11}, while we obtain a different limiting distribution for $L_{\min}$. However, in their subsequent paper \cite{bhh17} the authors of \cite{bhh11} mention an error in their previous result. In \cite{bhh17} first passage percolation on the configuration model is studied and the weights are required to be non-negative and to have a density. For the configuration model with degrees from a Poisson distribution with parameter $\lambda$, the results from \cite{bhh17} provide the same limiting distributions for the whole point process $\Psi_n$ and $(H_{\min},L_{\min})$ as in our Theorems \ref{thm:main} a) and \ref{thm:shortest_path} a) (with the difference that it is conditioned on $1$ and $n$ being connected and on $W_1,W_2>0$). This is no surprise, as one can expect that this particular configuration model behaves as the Erd\H{o}s--R\'{e}nyi graph $G(n,\lambda/n)$, and means that our findings are consistent with those of \cite{bhh17}. In fact the approach of \cite{bhh17} was extended in \cite{bhh14} to uniform and rank-one inhomogeneous random graphs, which include Erd\H{o}s--R\'{e}nyi graphs as a special case. So \cite[Theorem 3.3]{bhh14} leads to Theorem \ref{thm:shortest_path} a) (conditioned on $1$ and $n$ being connected and $W_1,W_2>0$) under the assumption that the distribution of the weights has a density.

In order to better understand our limiting distributions and to make the comparison with the mentioned results from \cite{bhh11,bhh14,bhh17} easier, let us discuss the limiting point processes and the distribution of $W$ in more detail. The Cox process in Theorem \ref{thm:main} a) can be also written as
$$
\mathbf{1}\{W_1,W_2>0\} \sum_{i=1}^{\infty} \delta_{(Z_i,Y_i - \frac{1}{\alpha} \log(W_1) - \frac{1}{\alpha} \log(W_2))},
$$
where $(Z_i)_{i\in\mathbb{N}}$ are independent standard Gaussian random variables, $(Y_i)_{i\in\mathbb{N}}$ are the points of a Poisson process on $\mathbb{R}$ with intensity measure $K_{\mathbb{R}}$, and $(Z_i)_{i\in\N}$, $(Y_i)_{i\in\N}$, $W_1$ and $W_2$ are assumed to be independent. In this representation, the second coordinates are the points of a Poisson process shifted by the random quantity $- \frac{1}{\alpha} \log(W_1) - \frac{1}{\alpha} \log(W_2)$. For the situation of weights with an arithmetic distribution considered in Theorem \ref{thm:main} b) such a representation with a random shift is not available since the total weights are concentrated on $M\mathbb{Z}$.

So far we have defined $W$ via the distributional fixed point equation \eqref{eqn:definition_W}. In the following we provide two alternative characterisations of the distribution of $W$ in terms of branching processes. For this we assume that $\mathbb{P}(X=0)=0$. Consider a continuous-time Galton--Watson process starting from one individual and in which each individual has an independent lifetime with distribution $F$ and gives birth to a $\operatorname{Poisson}(\lambda)$-distributed number of offspring at the end of that lifetime. Let $N(t)$ be the number of individuals alive in this process at time $t$. It is known from \cite{Athreya1969} that there exists a random variable $\tilde{V}$ such that $N(t)/\mathbb{E}[N(t)]\overset{d}{\longrightarrow} \tilde{V}$ as $t\to\infty$ and that $\tilde{V}$ satisfies $\mathbb{E}[\tilde{V}]=1$ and
$$
\tilde{V} \overset{d}{=} e^{-\alpha X} \sum_{i=1}^D \tilde{V}_i,
$$
where $(\tilde{V}_i)_{i\in\mathbb{N}}$ are independent copies of $\tilde{V}$, $D\sim\operatorname{Poisson}(\lambda)$, and $D$, $X$, and $(\tilde{V}_i)_{i\in\mathbb{N}}$ are independent. We then let
$$
\tilde{W} = \sum_{i=1}^D \tilde{V}_i.
$$
For copies $(\tilde{W}_i)_{i\in\mathbb{N}}$ of $\tilde{W}$, copies $(X_i)_{i\in\mathbb{N}}$ of $X$, $\operatorname{Poisson}(\lambda)$-distributed random variables $D, (D_i)_{i\in\mathbb{N}}$ and copies $(\tilde{V}_i)_{i\in\mathbb{N}}$ and $(\tilde{V}_{i,j})_{i,j\in\mathbb{N}}$ of $\tilde{V}$, which are all independent, we obtain
$$
\sum_{i=1}^{D} e^{-\alpha X_i} \tilde{W}_i \overset{d}{=} \sum_{i=1}^{D} e^{-\alpha X_i} \sum_{j=1}^{D_i} \tilde{V}_{i,j} \overset{d}{=} \sum_{i=1}^{D} \tilde{V}_{i} \overset{d}{=} \tilde{W},
$$
whence $\tilde{W}$ is a solution of \eqref{eqn:definition_W}. Moreover, $W=\tilde{W}/\mathbb{E}[\tilde{W}]=\tilde{W}/\lambda$, where we used $\mathbb{E}[\tilde{V}]=1$, is a solution of the fixed point equation \eqref{eqn:definition_W} with expectation one. 

We remark that $\tilde{W}$ may be described as the limit of a slightly different branching process which is more common in the literature on first passage percolation. We assume that the first individual has a lifetime $0$ while all other individuals have independent lifetimes drawn from distribution $F$. Then, the number $\tilde{N}(t)$ of individuals alive at time $t$ has the same distribution as $\sum_{i=1}^D N_i(t)$ with independent $D\sim\operatorname{Poisson}(\lambda)$ and copies $(N_i(t))_{i\in\mathbb{N}}$ of $N(t)$. This implies that $\tilde{N}(t)/\mathbb{E}[N(t)]\overset{d}{\longrightarrow} \tilde{W}$ as $t\to\infty$. The unusual assumption that the first individual has a lifetime $0$ is due to the fact that the branching process corresponds to the process of exploration, starting at time $0$, of the neighbourhood of a vertex. The vertex itself is explored immediately (corresponding to the $0$ lifetime), while its immediate neighbours (corresponding to children) are explored after i.i.d.\ times. Individuals from future generations are also explored after i.i.d.\ times. The terms `active' and `period of activity' may be better to describe individuals in this process and their lifetimes, respectively.

Point process convergence of extremal processes to Cox processes was obtained for other models of first passage percolation. For directed first passage percolation on the hypercube with exponential weights, point process convergence of the total weights to a Cox process was established in \cite{KSS}. Convergence in distribution of the smallest total weight to the first point of a Cox process was shown for the complete graph in \cite{Bhamidi08,bh12,eghn20,Janson99} and for dense {E}rd\H{o}s-{R}\'{e}nyi graphs in \cite{Bhamidi08}. The random intensity measure of the Cox process in \cite{bh12} is even driven by the random variables $W_1$ and $W_2$ as in Theorem \ref{thm:main}.

Our proof of Theorem \ref{thm:main} relies on the method of moments, employing combinatorial arguments and results from \cite{im10_jap} to control sums of weights. This is a different approach than in the existing literature and, in particular, in \cite{bhh11,bhh14,bhh17,kk15}, where branching processes play key roles. The main achievement of our work is that we can deal with very general weight distributions. The proof strategies of \cite{bhh11} and \cite{kk15} heavily rely on the Markovian structure due to the exponentially distributed weights and, thus, cannot be extended to more general weight distributions. The approach in \cite{bhh17} and \cite{bhh14} only requires the weights to have a density, but is still more restrictive than what is assumed throughout this paper.

It is a natural question whether our approach can also be applied to the configuration model and inhomogeneous random graphs. We conjecture that our strategy can be adapted to derive results as for the configuration model in \cite{bhh17} and for inhomogeneous random graphs in \cite{bhh14,kk15} under weaker assumptions on the weights, and analogous results for non-arithmetic weight distributions. However, as we use the method of moments, this requires the convergence of all moments of the degree sequence or of the degree of a typical vertex, which are much stronger assumptions than those made in \cite{bhh14}, \cite{bhh17} and \cite{kk15}, respectively. Whether one can nevertheless generalise the findings of \cite{bhh14}, \cite{bhh17} and \cite{kk15} to more general weight distributions without changing the moment assumptions is a question for further research. 

All our results are subject to the assumption \eqref{eqn:probability_weight_zero}, which ensures that there are not too many edges with weight zero. It is natural to wonder what happens if
$$
\lambda\mathbb{P}(X=0)=1 \quad \text{or} \quad \lambda\mathbb{P}(X=0)>1.
$$
In these cases the edges with weight zero form a critical or supercritical Erd\H{o}s-R\'enyi graph. One can expect that, at least in the latter case, completely different behaviour will be observed as a huge number of edges with weight zero can be used to travel between the vertices $1$ and $n$.

\section{Proofs of Theorems \ref{thm:main} and \ref{thm:shortest_path}}\label{sec:proof}

In this section we give the proofs of Theorems \ref{thm:main} and \ref{thm:shortest_path}. We start with a short outline of our proof strategy for Theorem \ref{thm:main}. Although we deal with the non-arithmetic and arithmetic cases simultaneously, we focus in this description on the non-arithmetic situation to keep the notation simple. Our aim is to show for a suitable class of sets $A$ that
\begin{equation}\label{eqn:strategy_proof}
\Psi_n(A)\overset{d}{\longrightarrow} \Psi_{\mathbb{R}^2}(A) \quad \text{as} \quad n\to\infty,
\end{equation}
where $\Psi_{\mathbb{R}^2}$ is the limiting Cox process in the non-arithmetic case. In other words, we establish that the number of points in $A$ of the considered point processes converges in distribution to the number of points of the limiting point process in $A$. Note that \eqref{eqn:strategy_proof} for a suitable class of sets $A$ is sufficient for the claimed point process convergence in Theorem \ref{thm:main}.

To verify \eqref{eqn:strategy_proof}, we employ the method of moments. First we compute the factorial moments of the limiting point processes in Section \ref{sec:moments_limit}. As they involve moments of the solution $W$ of the distributional fixed point equation \eqref{eqn:definition_W}, we study the existence as well as properties of $W$ and derive in particular a recursive formula for its moments. When counting paths with small sums of weights, we need to deal with certain sums of probabilities. We prepare for this with estimates in Section \ref{sec:large_deviations}, which heavily rely on results from \cite{im10_jap} to control the sums of weights. Thereafter, we compute the first moment of the prelimit point process $\Psi_n$. Apart from that, we show in Section \ref{sec:first_moments} that $\Psi_n$ can be approximated by a point process $\Psi_n^{(a,w)}$, defined precisely in \eqref{eq:def_Psi_n_a,w} below, that takes into account only short paths that are not crossed by other short paths. In fact, we will first establish \eqref{eqn:strategy_proof} for $\Psi_n^{(a,w)}$ instead of $\Psi_n$, which then carries over to $\Psi_n$. To compute the factorial moments of $\Psi_n^{(a,w)}$ in Section \ref{sec:higher_moments}, we introduce some weighted sums $M^{(r)}$, $r\in\mathbb{N}$, over trees in Section \ref{sec:trees}, which satisfy the same recursion as the moments of $W$. Thanks to this joint recursive structure one can conclude that the factorial moments of $\Psi_{n}^{(w,a)}$ converge to those of the limiting Cox processes. This is done the final step of the proof in Section \ref{sec:proof_completion}.

Finally, Theorem \ref{thm:shortest_path} is derived from Theorem \ref{thm:main} in Section \ref{sec:proof_shortest_path}.


\subsection{Factorial moments of the limiting point processes}\label{sec:moments_limit}

First we study the existence and uniqueness of $W$ as well as its non-negativity and the existence of moments.

\begin{lemma} \label{lem:exp_moments_W}
The distributional fixed point equation \eqref{eqn:definition_W} has a unique solution $W$ with $\mathbb{E}[W]=1$. Moreover, $\mathbb{P}(W\geq 0)=1$ and there exists $t>0$ such that
$$
\E\left[e^{tW}\right] < \infty.
$$
\end{lemma}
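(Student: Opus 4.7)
The plan is to prove the three claims in sequence: existence and uniqueness of a mean-one solution, non-negativity, and existence of a finite exponential moment; the last is the main obstacle.

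For existence, uniqueness and non-negativity, I would introduce the smoothing operator $T$ that sends a probability measure $\mu$ on $[0,\infty)$ with mean $1$ to the law of $\sum_{i=1}^D e^{-\alpha X_i}W_i$, with the $W_i$ i.i.d.\ $\sim \mu$, $D\sim\operatorname{Poisson}(\lambda)$ and $(X_i)$ i.i.d.\ copies of $X$, all mutually independent. Mean one is preserved because $\mathbb{E}[T(\mu)] = \lambda R(\alpha) = 1$ by \eqref{eq:def_alpha}. Following \cite{Roesler1992}, on the subspace of such measures with finite second moment equipped with the minimal $L^2$-metric $\ell_2$, the operator $T$ is a strict contraction with constant $\sqrt{\lambda R(2\alpha)}$: in $\mathbb{E}[(T(\mu)-T(\nu))^2]$ computed via the optimal coupling the cross terms vanish because $\mu$ and $\nu$ have equal means, leaving only the diagonal contribution $\lambda R(2\alpha)\ell_2(\mu,\nu)^2$; strict monotonicity of $R$ together with $\lambda R(\alpha)=1$ forces this constant to be $<1$. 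Banach's fixed-point theorem produces a unique fixed point $W$, and non-negativity is immediate since $T$ preserves the support condition $[0,\infty)$ and the $\ell_2$-limit of non-negative distributions is non-negative. To upgrade uniqueness to the full class of mean-one solutions without an a priori second moment, I would derive the moment recursion obtained from multinomially expanding \eqref{eqn:definition_W} and using the factorial moments of $D\sim\operatorname{Poisson}(\lambda)$; the prefactors $1-\lambda R(k\alpha)\geq 1-\lambda R(2\alpha)>0$ are invertible for $k\geq 2$, so the integer moments of any mean-one solution are determined, reducing to the contraction case.

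The main step is the exponential moment. I would construct $W$ as the distributional limit of the Picard iterates $W^{(0)}\equiv 1$ and
\begin{equation*}
W^{(n+1)} \overset{d}{=} \sum_{i=1}^D e^{-\alpha X_i}W_i^{(n)}, \qquad n\geq 0,
\end{equation*}
with independent copies refreshed at each level, and would bound their moment generating functions $\phi_n(s)=\mathbb{E}[e^{sW^{(n)}}]$ uniformly on some interval $[0,s_0]$. The Laplace functional of a Poisson sum gives
\begin{equation*}
\phi_{n+1}(s) = \exp\bigl(\lambda\bigl(\mathbb{E}[\phi_n(se^{-\alpha X})]-1\bigr)\bigr).
\end{equation*}
By induction I would prove the ansatz $\phi_n(s)\leq 1+s+Cs^2$ on $[0,s_0]$. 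Substituting into the recursion and using $\lambda R(\alpha)=1$ yields $\lambda(\mathbb{E}[\phi_n(se^{-\alpha X})]-1)\leq s + C\lambda R(2\alpha)s^2$, whence
\begin{equation*}
\phi_{n+1}(s)\leq \exp\bigl(s + C\lambda R(2\alpha)s^2\bigr).
\end{equation*}
A direct Taylor expansion at $s=0$ shows this is bounded by $1+s+Cs^2$ on some $[0,s_0]$ whenever $C > (2(1-\lambda R(2\alpha)))^{-1}$, closing the induction. Combined with $W^{(n)}\overset{d}{\longrightarrow} W$ (from the contraction) and the portmanteau inequality applied to the non-negative lower-semicontinuous map $w\mapsto e^{sw}$, this gives $\mathbb{E}[e^{sW}]\leq \liminf_n \phi_n(s) < \infty$ for every $s\in[0,s_0]$, finishing the proof.

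The key ingredient throughout is the strict inequality $\lambda R(2\alpha)<1$, which comes from the strict monotonicity of $R$ combined with \eqref{eq:def_alpha}. The delicate part that I expect to require the most care is the tuning of $C$ and $s_0$ so that the exponential in the functional equation is dominated by the quadratic ansatz; the only slack is the gap $1-\lambda R(2\alpha)>0$, into which the higher-order remainders of $\exp(s + C\lambda R(2\alpha)s^2)$ must fit for sufficiently small $s_0$.
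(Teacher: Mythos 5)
Your argument is essentially the paper's: the same Picard iterates $W^{(n)}$, the same Laplace-functional recursion $\phi_{n+1}(s)=\exp\bigl(\lambda(\mathbb{E}[\phi_n(se^{-\alpha X})]-1)\bigr)$, and the same quadratic moment-generating-function ansatz closed by an induction that exploits the slack $1-\lambda R(2\alpha)>0$. The only cosmetic differences are that you use the polynomial ansatz $1+s+Cs^2$ where the paper uses $e^{s+cs^2}$ together with the elementary bound $e^u\le 1+u+u^2$ for small $u$, and you pass to the limit via portmanteau for the nonnegative lower-semicontinuous map $w\mapsto e^{sw}$ where the paper uses Skorokhod representation plus Fatou; both are fine.

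One caveat on your side remark about upgrading uniqueness to the full class of mean-one solutions: the moment recursion determines the moments of a mean-one solution \emph{provided} those moments are finite, but it does not a priori rule out a mean-one solution with $\mathbb{E}[W^2]=\infty$ (the recursion then reads $\infty=\lambda R(2\alpha)\cdot\infty+\text{finite}$, which is uninformative). Closing this is exactly what the cited portion of R\"osler's paper (Theorem 4 and the remark after it, which the paper invokes) is used for, so the correct move is simply to rely on that reference rather than the moment recursion.
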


\begin{proof}
Let us introduce
$$
W(1)= 1 \quad \text{and} \quad W(n+1) = \sum_{i=1}^D e^{-\alpha X_i} W_i(n) \quad \text{for} \quad  n\in\mathbb{N},
$$
where $(X_i)_{i\in\mathbb{N}}$ all have the same distribution as $X$, $(W_i(n))_{i\in\mathbb{N}}$ all have the same distribution as $W(n)$, $D\sim\operatorname{Poisson}(\lambda)$ and all these random variables are independent. Defining
$$
T_i = e^{-\alpha X_i} \mathbf{1}\{D \ge i\}
$$
for $i\in\mathbb{N}$, we have that
$$
\E\left[\sum_{i=1}^\infty T_i^2\right] = \E\left[\sum_{i=1}^D e^{-2 \alpha X_i}\right] = \lambda \E\left[e^{-2 \alpha X}\right] < 1
$$
due to the definition of $\alpha$ and \eqref{eqn:F_tilde}. Thus, it follows from \cite{Roesler1992} (see Theorem 4 and the remark after it) that the sequence $(W(n))_{n\in\mathbb{N}}$ converges in distribution to a unique fixed point which satisfies \eqref{eqn:definition_W} and has mean $1$. This provides the existence and uniqueness of $W$. Since $(W(n))_{n\in\mathbb{N}}$ are non-negative by construction, $W$ is also non-negative as their limit.

We now show by induction that there exist $c > 0$ and $t > 0$ such that
\begin{equation} \label{eq:ind_exp_moments}
\E\left[e^{sW(n)}\right]  \le e^{s+cs^2}\quad \text{for all} \quad 0 \le s \le t \quad\text{and} \quad n\in\mathbb{N}.
\end{equation}
For $n=1$, the above holds. Assume now that it is valid for an arbitrary $n$. As $e^{-\alpha X} \le 1$ a.s., this implies that
$$
\E\left[e^{se^{-\alpha X} W(n)}|X\right] \le e^{se^{-\alpha X} + c s^2e^{-2\alpha X}} \quad \text{a.s.}
$$
Hence,
\begin{align*}
\E\left[e^{sW(n+1)}\right] & = \exp\left\{\lambda\left(\E\left[e^{se^{-\alpha X} W(n)}\right] -1 \right) \right\} 
\\ & \le \exp\left\{\lambda\left(\E\left[e^{se^{-\alpha X} + c s^2e^{-2\alpha X}} \right] - 1\right)\right\}
\\ & \le \exp\big\{\lambda s \E\left[e^{-\alpha X}\right] + \lambda (c+1)s^2 \E\left[e^{-2\alpha X}\right] \\
& \quad  + 2 \lambda c s^3\E\left[e^{-3\alpha X}\right] + \lambda s^4 c^2\E\left[e^{-4\alpha X}\right] \big\},
\end{align*}
as long as $t$ is chosen small enough so that
\begin{equation}\label{eqn:condition_t}
e^u \le 1 + u +u^2 \quad \text{for all} \quad 0\le u\leq t+ct^2.
\end{equation}
As $\lambda \E\left[e^{-\alpha X}\right] =1$, it is sufficient to choose $c$ so that
$$
c > \frac{\lambda \E\left[e^{-2\alpha X}\right]}{1 - \lambda \E\left[e^{-2\alpha X}\right]}
$$
and $t$ small enough so that \eqref{eqn:condition_t} is satisfied and
$$
\left(\lambda (c+1) \E\left[e^{-2\alpha X}\right] - c \right)  + 2 \lambda c t\E\left[e^{-3\alpha X}\right] + \lambda t^2 c^2\E\left[e^{-4\alpha X}\right] \le 0
$$
to demonstrate that \eqref{eq:ind_exp_moments} holds for all $n$.

As $W(n) \overset{d}{\longrightarrow} W$ as $n\to\infty$, thanks to \cite[Theorem 5.31]{k21} there exists a probability space and random variables $(V(n))_{n\in \mathbb{N}}$ and $V$ defined on it such that $V(n) \overset{d}{=}W(n) $ for $n\in\mathbb{N}$, $V \overset{d}{=} W$ and $V(n) \overset{a.s.}{\longrightarrow} V$ as $n\to\infty$. This, along with \eqref{eq:ind_exp_moments} and the Fatou lemma, implies the statement of the lemma.
\end{proof}

For $u\in\R$ and $r\in\mathbb{N}$ we define $(u)_r= u(u-1)\cdots (u-r+1)$ and recall that for a random variable $Y$ the expectation $\E[(Y)_r]$ is called the $r$-th factorial moment of $Y$ if it is well defined. Let $\Pi(r)$ denote the set of partitions of $[r]$, which are collections of pairwise disjoint non-empty subsets of $[r]$ whose union is $[r]$. By $|I|$ we denote the cardinality of a finite set $I$.

Recall again that $\E[W]=1$ by definition. The following lemma provides a relation for higher moments.

\begin{lemma} \label{lem:recursion_moments_W}
The following recursive relation holds for $W$ as in Lemma \ref{lem:exp_moments_W} and all integers $r \ge 2$:
\begin{equation*} 
\E\left[W^r\right] = \frac{1}{1-\lambda \E\left[e^{-r \alpha X}\right]}\sum\limits_{\pi \in \Pi(r) \setminus \{[r]\}} \prod\limits_{I \in \pi} \lambda \E\left[e^{-|I| \alpha X}\right] \E\left[W^{|I|}\right].
\end{equation*}
\end{lemma}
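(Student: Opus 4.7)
The plan is to raise both sides of the fixed point equation \eqref{eqn:definition_W} to the $r$-th power, expand the multinomial, and index the resulting terms by set partitions according to which summation indices coincide. Concretely, I would write
\begin{equation*}
\E[W^r] = \E\biggl[\Bigl(\sum_{i=1}^D e^{-\alpha X_i} W_i\Bigr)^r\biggr] = \E\biggl[\sum_{i_1,\ldots,i_r=1}^D \prod_{j=1}^r e^{-\alpha X_{i_j}} W_{i_j}\biggr],
\end{equation*}
and then partition the sum over $(i_1,\ldots,i_r)\in[D]^r$ according to the equivalence relation $j\sim \ell \Leftrightarrow i_j=i_\ell$. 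For a fixed partition $\pi=\{I_1,\ldots,I_k\}\in\Pi(r)$, the tuples inducing $\pi$ are in bijection with injections from the blocks of $\pi$ into $[D]$, so there are $(D)_k$ of them. On such a tuple, using independence of $D$, $(X_i)_{i\in\N}$ and $(W_i)_{i\in\N}$ as well as i.i.d.\ copies, each block $I$ contributes a factor $\E[e^{-|I|\alpha X}]\E[W^{|I|}]$. All quantities are non-negative and, by Lemma \ref{lem:exp_moments_W}, all moments of $W$ are finite, so Fubini/Tonelli applies and the exchange of sums and expectations is justified.

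Since $D\sim\operatorname{Poisson}(\lambda)$ satisfies $\E[(D)_k]=\lambda^k$, this yields
\begin{equation*}
\E[W^r] = \sum_{\pi\in\Pi(r)} \prod_{I\in\pi} \lambda\E[e^{-|I|\alpha X}]\E[W^{|I|}].
\end{equation*}
Isolating the contribution of the trivial partition $\pi=\{[r]\}$ and moving it to the left-hand side gives
\begin{equation*}
\bigl(1-\lambda\E[e^{-r\alpha X}]\bigr)\E[W^r] = \sum_{\pi\in\Pi(r)\setminus\{[r]\}} \prod_{I\in\pi} \lambda\E[e^{-|I|\alpha X}]\E[W^{|I|}].
\end{equation*}
It remains to verify $1-\lambda\E[e^{-r\alpha X}]>0$ for $r\ge 2$, which permits division by this factor and completes the proof. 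For this, note that $e^{-r\alpha X}\le e^{-\alpha X}$ a.s.\ since $e^{-\alpha X}\in[0,1]$; equality of the expectations would force $e^{-\alpha X}\in\{0,1\}$ a.s., and as $X$ is a.s.\ finite this would mean $X=0$ a.s., contradicting $\lambda\mathbb{P}(X=0)<1<\lambda$. Hence $\lambda\E[e^{-r\alpha X}]<\lambda\E[e^{-\alpha X}]=1$ by \eqref{eq:def_alpha}.

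The only delicate point is bookkeeping the combinatorial identity that translates the sum over tuples $(i_1,\ldots,i_r)\in[D]^r$ into the sum over partitions with the correct falling-factorial multiplicity; the rest is a routine application of independence and linearity. All integrability issues are handled uniformly by Lemma \ref{lem:exp_moments_W}, so no additional truncation or dominated-convergence argument is needed.
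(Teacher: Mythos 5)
Your argument is correct and is essentially the same as the paper's proof: expand $\bigl(\sum_{i=1}^D e^{-\alpha X_i}W_i\bigr)^r$, group tuples $(i_1,\ldots,i_r)\in[D]^r$ by the partition of $[r]$ they induce, use that $\E[(D)_k]=\lambda^k$ for $D\sim\operatorname{Poisson}(\lambda)$, and then isolate the term $\pi=\{[r]\}$. The paper states this more tersely, whereas you additionally spell out the bijection with injections, the Fubini/Tonelli justification via Lemma \ref{lem:exp_moments_W}, and the positivity of $1-\lambda\E[e^{-r\alpha X}]$ (which the paper tacitly uses, having already noted $\lambda\E[e^{-2\alpha X}]<1$ in the proof of Lemma \ref{lem:exp_moments_W}); these are welcome details but not a different method.
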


\begin{proof}
This follows from the fact that, since $D$ is a Poisson$(\lambda)$ random variable, its $r$-th factorial moment is equal to $\lambda^r$. Hence, using \eqref{eqn:definition_W},
\begin{align*}
\E\left[W^r\right] & = \sum\limits_{\pi \in \Pi(r)} \prod\limits_{I \in \pi} \lambda \E\left[e^{-|I| \alpha X}\right] \E\left[W^{|I|}\right] 
\\ & = \lambda \E\left[e^{-r\alpha X}\right] \E\left[W^r\right] + \sum\limits_{\pi \in \Pi(r) \setminus \{[r]\}} \prod\limits_{I \in \pi} \lambda \E\left[e^{-|I| \alpha X}\right] \E\left[W^{|I|}\right],
\end{align*}
as required.
\end{proof}

Next we compute the factorial moments of the number of points in a given set for a Cox process as in Theorem \ref{thm:main}.

\begin{lemma} \label{lem:factorial_moments_limit}
For a locally finite measure $\Lambda$ on $\mathbb{R}^2$ and independent copies $W_1$ and $W_2$ of $W$, let $\Psi\sim\operatorname{Poisson}(W_1W_2\Lambda)$. Then, for $A \in \mathcal{B}(\R^2)$ such that $\Lambda(A) < \infty$ and $r\in\mathbb{N}$,
\begin{equation*} 
\E\left[\left(\Psi(A) \right)_r\right] = \Lambda(A)^r \left(\E\left[W^r\right]\right)^2.
\end{equation*}
\end{lemma}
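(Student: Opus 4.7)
The plan is to exploit the Cox structure by conditioning on $(W_1,W_2)$ and then using the well-known formula for the factorial moments of a Poisson random variable. By the definition of a Cox process directed by the random measure $W_1W_2\Lambda$, conditional on $(W_1,W_2)$ the point process $\Psi$ is a Poisson process on $\R^2$ with intensity measure $W_1W_2\Lambda$, so in particular the random variable $\Psi(A)$ is conditionally $\operatorname{Poisson}(W_1W_2\Lambda(A))$-distributed. The assumption $\Lambda(A)<\infty$, together with $W_1,W_2<\infty$ a.s., ensures that this conditional Poisson parameter is a.s.\ finite, so there is no issue with the $\operatorname{Poisson}(\infty)$ convention.

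Next, I would recall (or quickly verify by differentiating the probability generating function $\E[z^Y]=e^{\mu(z-1)}$ at $z=1$) that for $Y\sim\operatorname{Poisson}(\mu)$ with $\mu\in[0,\infty)$ one has $\E[(Y)_r]=\mu^r$ for every $r\in\N$. Applying this identity to the conditional distribution of $\Psi(A)$ gives
\begin{equation*}
\E\bigl[(\Psi(A))_r \bigm| W_1,W_2\bigr] = \bigl(W_1 W_2 \Lambda(A)\bigr)^r = \Lambda(A)^r\, W_1^r\, W_2^r \quad \text{a.s.}
\end{equation*}

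Taking expectations and using that $W_1$ and $W_2$ are independent copies of $W$, we obtain
\begin{equation*}
\E\bigl[(\Psi(A))_r\bigr] = \Lambda(A)^r\, \E[W_1^r]\, \E[W_2^r] = \Lambda(A)^r\, \bigl(\E[W^r]\bigr)^2,
\end{equation*}
which is the claim. The finiteness of $\E[W^r]$ for every $r\in\N$ (needed so that the right-hand side is well defined and so that the tower property applies without subtlety) is guaranteed by Lemma \ref{lem:exp_moments_W}, since the existence of an exponential moment $\E[e^{tW}]<\infty$ for some $t>0$ implies that all power moments of $W$ are finite.

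There is no substantive obstacle here: the lemma is essentially a bookkeeping consequence of the Cox process definition and the standard Poisson factorial moment identity. The only thing to be mindful of is the measurability/integrability justification for conditioning, which is handled cleanly by the finiteness of $\Lambda(A)$ and the exponential moment bound for $W$.
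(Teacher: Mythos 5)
Your proof is correct and follows exactly the paper's argument: condition on $(W_1,W_2)$, apply the identity $\E[(Y)_r]=\mu^r$ for $Y\sim\operatorname{Poisson}(\mu)$, and then use the tower property together with the independence of $W_1$ and $W_2$. The additional remarks on finiteness of the conditional intensity and of $\E[W^r]$ are fine but not needed beyond what the paper already establishes in Lemma \ref{lem:exp_moments_W}.
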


\begin{proof}
Since $\Psi(A)$ conditioned on $W_1$ and $W_2$ follows a Poisson distribution whose parameter is $W_1W_2\Lambda(A)$ and the $r$-th factorial moment of a Poisson distribution with parameter $a$ is $a^r$, we obtain
\begin{align*}
\E[(\Psi(A))_r] &= \E[\E[(\Psi(A))_r|W_1,W_2] ] = \E[ (W_1W_2\Lambda(A))^r ]\\
& = \E[ W_1^r ] \E[ W_2^r ] \Lambda(A)^r = \Lambda(A)^r (\E[ W^r ])^2,
\end{align*}
which is the desired identity.
\end{proof}

Since we will use the method of moments to show convergence in distribution of $(\Psi_n(A))_{n\in\mathbb{N}}$ to $\Psi(A)$ for $\Psi$ as in the previous lemma and suitable sets $A$, we need that the distribution of $\Psi(A)$ is characterised by its moments, which we establish in the following lemma.

\begin{lemma} \label{lem:moments_define_distribution}
For $\Psi$ and $A\in\mathcal{B}(\R^2)$ as in Lemma \ref{lem:factorial_moments_limit}, the distribution of $\Psi(A)$ is uniquely determined by its moments.
\end{lemma}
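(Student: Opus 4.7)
The plan is to verify Carleman's moment determinacy criterion --- more precisely its Stieltjes form, since $\Psi(A)\geq 0$. The factorial moments are already available from Lemma \ref{lem:factorial_moments_limit}, so the only additional input needed is a polynomial-moment bound for $W$, which comes from the exponential moment bound of Lemma \ref{lem:exp_moments_W}. Indeed, $\mathbb{E}[e^{tW}]<\infty$ for some $t>0$, and Markov's inequality combined with the tail integral yields
$$\mathbb{E}[W^r] \leq \mathbb{E}[e^{tW}]\cdot\frac{r!}{t^r} \leq K^r r!$$
for a suitable $K>0$ and every $r\in\mathbb{N}$. Plugging this into Lemma \ref{lem:factorial_moments_limit} gives $\mathbb{E}[(\Psi(A))_r]\leq C_1^r(r!)^2$.

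Next I would convert factorial moments to ordinary moments via the identity $Y^r=\sum_{k=0}^{r}S(r,k)(Y)_k$, where $S(r,k)$ is a Stirling number of the second kind, together with the surjection bound $k!\,S(r,k)\leq k^r$. A short calculation then produces $\mathbb{E}[\Psi(A)^r]\leq C_2^r\, r!\, r^r$ for some $C_2>0$. By Stirling's approximation $(r!)^{1/r}\sim r/e$, this gives $\mathbb{E}[\Psi(A)^r]^{1/(2r)} = O(r)$, and hence $\sum_{r\geq 1}\mathbb{E}[\Psi(A)^r]^{-1/(2r)}$ diverges like the harmonic series. Carleman's criterion then yields uniqueness of the distribution of $\Psi(A)$ from its moments.

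The hard part here is that the moment generating function route is not available: because a product of two random variables with exponential tails need only have subexponential tails, one has $\mathbb{E}[\exp(sW_1W_2)]=\infty$ for every $s>0$, and therefore $\mathbb{E}[\exp(s\Psi(A))]=\mathbb{E}[\exp((e^s-1)W_1W_2\Lambda(A))]=\infty$ as well. Consequently one is forced into Carleman. The $(r!)^2$ growth of the factorial moments then sits exactly at the boundary where Carleman still succeeds: the resulting series is comparable to the harmonic series, which diverges only by the thinnest of margins. Any genuinely heavier tail on $W$ (say, producing moments of order $(r!)^{1+\varepsilon}$) would push the Carleman sum into a convergent regime, and the argument would break down.
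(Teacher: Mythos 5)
Your proof is correct and follows essentially the same strategy as the paper: verify the Stieltjes/Carleman criterion for the non-negative random variable $\Psi(A)$, feed in the factorial moments from Lemma \ref{lem:factorial_moments_limit}, and control $\E[W^r]$ via the exponential moment bound of Lemma \ref{lem:exp_moments_W} (indeed the paper also uses $\E[e^{tW}] \ge t^r \E[W^r]/r!$, exactly as you do). The only place you diverge is the step passing from factorial moments to ordinary moments: you use the Stirling-number expansion $Y^r = \sum_{k=0}^r S(r,k)(Y)_k$ with the surjection bound $k!\,S(r,k)\le k^r$, whereas the paper uses the more elementary split $\E[\Psi(A)^r] \le 2^r\E[(\Psi(A))_r] + 2^r r^r$ obtained from the indicator of the event $\{\Psi(A)-r+1 \ge \Psi(A)/2\}$. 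Both produce a bound of the form $\E[\Psi(A)^r]^{1/(2r)} = O(r)$, so the Carleman sum diverges like the harmonic series and moment determinacy follows. Your side remark correctly identifies why the moment-generating-function route is blocked (conditionally on $(W_1,W_2)$ one gets $\E[\exp(s\Psi(A))] = \E[\exp((e^s-1)W_1W_2\Lambda(A))]$, which is infinite as soon as $W_1W_2$ has no exponential moments) and why the $(r!)^2$ growth rate sits on the boundary of Carleman's applicability; neither observation appears in the paper but both are accurate.
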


\begin{proof}
As $\Psi(A)$ is almost surely non-negative, a sufficient condition (referred to as Carleman's condition; see page 353 of \cite{s16}) for its distribution to be defined by its moments is
\begin{equation} \label{eq:stieltjes}
\sum_{r=1}^\infty \E\left[\Psi(A)^r\right]^{-\frac{1}{2r}} = \infty.
\end{equation}
Note now that
\begin{align*}
\E\left[\Psi(A)^r\right] & = \E\left[\Psi(A)^r \mathbf{1}\{\Psi(A) - r +1 \ge \Psi(A)/2\} \right] \\
& \quad + \E\left[\Psi(A)^r \mathbf{1}\{\Psi(A) - r +1 < \Psi(A)/2\} \right]
\\ & \le 2^r \E[(\Psi(A))_r] + 2^r r^r = 2^r \left(\Lambda(A)^r \left(\E\left[W^r\right]\right)^2 + r^r \right),
\end{align*}
where we used Lemma \ref{lem:factorial_moments_limit} in the last step.
Thanks to Lemma \ref{lem:exp_moments_W} and as $\E\left[e^{tW}\right] \ge t^r \E\left[W^r\right]/r!$, there exist constants $0< t, C < \infty$ such that $\E\left[W^r\right] \le C r!/t^r$ for all $r\ge 1$. This implies
$$
\E\left[\Psi(A)^r\right] \le 2^r \left(\frac{\Lambda(A)^r}{t^{2r}} C^2 (r!)^2 + r^r\right) = O\left(2^r \frac{\Lambda(A)^r}{t^{2r}} r^{2r+1} e^{-2r}\right),
$$
which proves \eqref{eq:stieltjes} and, thus, the lemma.
\end{proof}

\subsection{Truncated exponential renewal functions}\label{sec:large_deviations}

Throughout the proof of Theorem \ref{thm:main} we count short paths to obtain moment estimates for our prelimit processes. For this it is important to control products of probabilities that some path is present in the {E}rd\H{o}s--{R}\'{e}nyi graph and that the sum of its weights does not exceed a given threshold. In this subsection, we derive estimates for sums of such expressions that will be employed later on.

In order to study the non-arithmetic and the arithmetic cases simultaneously, we define
\begin{equation}\label{eq:def_tau_n}
\tau_n= n e^{-\alpha \varrho_n}
\end{equation}
for $n\in\mathbb{N}$ (noting that this simplifies to $\tau_n=1$ in the non-arithmetic case) and
\begin{equation}\label{eq:def_Lambda}
\Lambda=\begin{cases} \mathbb{P}_N\otimes K_\mathbb{R}, & \quad \text{$X$ is non-arithmetic},\\ \mathbb{P}_N\otimes K_{M\mathbb{Z}}, & \quad \text{$X$ is arithmetic with span $M$}. \end{cases}
\end{equation}
For $n\in\mathbb{N}$ and $z\in\R$ we define
\begin{equation}\label{eq:def_of_k_n(z)}
k_n(z)= \lfloor z \sqrt{\beta \log(n)} + \gamma \log(n)\rfloor
\end{equation}
with $\beta$ and $\gamma$ as in \eqref{eq:def_beta_gamma}. We let $S_k=X_1+\cdots+X_k$ for $k\in\mathbb{N}$, where $(X_i)_{i\in\mathbb{N}}$ are i.i.d.\ random variables with distribution $F$.

\begin{lemma} \label{lem:renewal}
\begin{itemize}
\item [a)] For $\lambda$ as in \eqref{eqn:probability_weight_zero} and $\alpha$ as in \eqref{eq:def_alpha}, there exists a constant $C\in(0,\infty)$ such that
$$
\sum_{\ell=1}^\infty \lambda^{\ell} \mathbb{P}(S_\ell\leq x) \leq C e^{\alpha x}
$$
for all $x\in[0,\infty)$.
\item [b)] For $\lambda$ as in \eqref{eqn:probability_weight_zero} and $u,z\in\mathbb{R}$,
$$
\lim_{n\to\infty} \frac{\tau_n}{n} \sum_{\ell=1}^{k_n(z)} \lambda^\ell \mathbb{P}\left(S_\ell \leq \varrho_n + u \right) = \Lambda((-\infty,z]\times(-\infty,u]),
$$
where $\varrho_n$ is defined by \eqref{eq:def_rho}.
\end{itemize}
\end{lemma}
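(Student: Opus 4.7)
Both parts rest on the exponential change-of-measure identity
\[
\lambda^\ell \,\mathbb{P}(S_\ell \leq x) = \mathbb{E}\bigl[e^{\alpha \tilde{S}_\ell} \mathbf{1}\{\tilde{S}_\ell \leq x\}\bigr],
\]
valid for every $\ell \in \mathbb{N}$ and $x \geq 0$, where $\tilde{S}_\ell = \tilde{X}_1 + \cdots + \tilde{X}_\ell$ is a random walk with i.i.d.\ steps having distribution $\tilde{F}$ as in \eqref{eqn:F_tilde}. The identity is immediate: writing $\prod d\tilde F(x_i) = \lambda^\ell e^{-\alpha \sum x_i}\prod dF(x_i)$ and cancelling against $e^{\alpha \tilde S_\ell}$ reproduces the left-hand side. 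Recall that $\tilde X$ has finite positive mean $\bar\nu$ and variance $\bar\sigma^2$.

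For part (a), summation yields $\sum_{\ell\geq 1}\lambda^\ell \mathbb{P}(S_\ell \leq x) = \int_{[0,x]} e^{\alpha y}\,dU(y)$, where $U(y) = \sum_{\ell\geq 1}\mathbb{P}(\tilde S_\ell \leq y)$ is the renewal measure of $\tilde S$. The subadditivity estimate $U(y+1) - U(y) \leq U(1) + 1 =: C_0$, uniform in $y\geq 0$ by a standard first-passage coupling, combined with the unit-interval bound $\int_k^{k+1} e^{\alpha y}\,dU(y) \leq C_0 e^{\alpha(k+1)}$, gives a geometric sum bounded by $C e^{\alpha x}$ for an appropriate $C$.

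For part (b), the same change of measure together with $\tau_n/n = e^{-\alpha \varrho_n}$ produces
\[
\frac{\tau_n}{n} \sum_{\ell=1}^{k_n(z)} \lambda^\ell \mathbb{P}(S_\ell \leq \varrho_n + u) = \mathbb{E}\!\left[\sum_{\ell=1}^{k_n(z)} e^{\alpha(\tilde{S}_\ell - \varrho_n)} \mathbf{1}\{\tilde{S}_\ell \leq \varrho_n + u\}\right].
\]
The strategy is to show that two asymptotic effects decouple into the product $\Phi(z)\cdot K((-\infty,u])$ with $K=K_\mathbb{R}$ or $K=K_{M\mathbb{Z}}$. The first is the local renewal behaviour near $y=\varrho_n$: in the non-arithmetic case, Blackwell's theorem applied to the directly Riemann integrable function $r\mapsto e^{-\alpha r}\mathbf{1}\{r\geq -u\}$ yields $\int e^{-\alpha(\varrho_n - y)}\mathbf{1}\{y\leq \varrho_n + u\}\,dU(y)\to e^{\alpha u}/(\alpha\bar\nu) = K_\mathbb{R}((-\infty, u])$; in the arithmetic case $\varrho_n\in M\mathbb{Z}$ and the arithmetic Blackwell theorem gives $U(\{\varrho_n+jM\})\to M/\bar\nu$ for each fixed $j$, summing to $K_{M\mathbb{Z}}((-\infty,u])$. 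The second is the truncation $\ell\leq k_n(z)$: by the CLT for $\tilde S_\ell$ and the identity $\bar\sigma^2\gamma = \bar\nu^2\beta$ built into \eqref{eq:def_beta_gamma}, the index $\ell$ producing a renewal in a bounded neighbourhood of $\varrho_n$ satisfies $(\ell-\gamma\log n)/\sqrt{\beta\log n}\overset{d}{\to}-N$ for standard Gaussian $N$, so the restriction $\ell\leq k_n(z)=\gamma\log n + z\sqrt{\beta\log n}+O(1)$ asymptotically captures a fraction $\mathbb{P}(-N\leq z) = \Phi(z)$.

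To implement this rigorously I would split the sum at $\ell_n^\pm := \lfloor\gamma\log n \pm T\sqrt{\beta\log n}\rfloor$ for a large constant $T$, letting $n\to\infty$ first and $T\to\infty$ afterwards. The lower tail $\ell<\ell_n^-$ is controlled by part (a) combined with a Gaussian lower-deviation bound $\mathbb{P}(\tilde S_\ell\leq \varrho_n+u)\leq e^{-cT^2}$ for such $\ell$; the upper tail above $\min(k_n(z),\ell_n^+)$ is similarly negligible by Gaussian upper deviations. In the central window, a joint CLT--renewal statement of the type developed in \cite{im10_jap} produces the product $\Phi(z)\cdot K((-\infty,u])$. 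Note that the prefactor $\tau_n/n=e^{-\alpha\varrho_n}$ exactly cancels the arithmetic offset $\varrho_n - \log n/\alpha$, which is why no residual $\vartheta$-factor appears in the limit here even though one does in Theorem \ref{thm:main}\,b).

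The main obstacle is precisely the decoupling in the central window: we need to combine the CLT (for the \emph{index} $\ell$) with Blackwell's theorem (for the \emph{value} $\tilde S_\ell$) at the same logarithmic scale. These are conceptually separate limit theorems, and their joint behaviour is not a routine consequence of either one alone; this is exactly why the outline announces appeal to the refined renewal results of \cite{im10_jap}.
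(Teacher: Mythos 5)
Your part~(a) is fine and is in fact more self-contained than the paper's route: the paper gets the bound from the asymptotics $V(x)e^{-\alpha x}\to 1/(\alpha\bar\nu)$ (resp.\ the lattice version) of Theorem~2.2 of \cite{im10_jap} plus monotonicity of $V$, whereas your tilting identity $\lambda^\ell\mathbb{P}(S_\ell\le x)=\mathbb{E}[e^{\alpha\tilde S_\ell}\mathbf{1}\{\tilde S_\ell\le x\}]$ combined with subadditivity of the renewal function of $\tilde F$ (finite because $\mathbb{P}(\tilde X=0)=\lambda\mathbb{P}(X=0)<1$) gives $Ce^{\alpha x}$ directly.

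Part~(b), however, has a genuine gap precisely where you flag it. The step that turns the truncation $\ell\le k_n(z)$ into the factor $\Phi(z)$ is asserted via ``a joint CLT--renewal statement of the type developed in \cite{im10_jap}'', but \cite{im10_jap} (as used in the paper) only supplies the asymptotics of the \emph{untruncated} sum $V(x)=\sum_{\ell\ge 0}\lambda^\ell\mathbb{P}(S_\ell\le x)$; no off-the-shelf result couples the Gaussian fluctuation of the index with the renewal behaviour of the value, and you yourself concede this is not a routine consequence of the two separate theorems. The paper closes exactly this gap by a different device, which your outline is missing: it analyses the complementary tail $\frac{\tau_n}{n}\sum_{\ell>k_n(z)}\lambda^\ell\mathbb{P}(S_\ell\le\varrho_n(u))$ by splitting the walk after $k_n(z)+1$ steps, $S_{k_n(z)+1+\ell}=S_{k_n(z)+1}+S'_\ell$, conditioning so that the inner sum becomes the deterministic function $V$ evaluated at the residual budget $\varrho_n(u)-S_{k_n(z)+1}$, and tilting only the first $k_n(z)+1$ increments; then the CLT applies to $\tilde S_{k_n(z)+1}$ (producing $\Phi(-z)$), the ratio $V(t)e^{-\alpha t}$ is replaced by its limit on the event that the residual budget exceeds $(\log n)^{1/4}$ (with the boundary contribution shown to vanish), and subtracting from the full-sum limit gives $\Phi(z)\Lambda(\cdot)$. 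In addition, your tail controls are misstated: for $\ell<\ell_n^-$ one has $\mathbb{P}(\tilde S_\ell\le\varrho_n+u)\to 1$, not $\le e^{-cT^2}$ (that estimate pertains to the upper range of indices); what is small in the lower range is the weight $e^{\alpha(\tilde S_\ell-\varrho_n)}$, and showing that this weighted sum is negligible uniformly in $n$ as $T\to\infty$ requires a separate moderate-deviation/extra-tilting estimate that your plan does not provide --- and which the paper's subtraction scheme avoids needing at all. As written, the proposal therefore does not constitute a proof of part~(b).
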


\begin{proof}
For $x\geq 0$ we define
$$
V(x) = \sum_{\ell=0}^\infty \lambda^\ell \mathbb{P}(S_\ell\leq x).
$$
Note that
$$
R:= -\log \inf_{t\geq 0} \mathbb{E}[e^{-t X}] = \begin{cases} \infty, & \quad \mathbb{P}(X=0)=0,\\ -\log(\mathbb{P}(X=0)), & \quad \mathbb{P}(X=0)\in(0,1). \end{cases}
$$
Thanks to \eqref{eqn:probability_weight_zero}, we have that $\log(\lambda)<R$. Thus, it follows from Theorem 2.2 of \cite{im10_jap} (with $a=\log(\lambda)$ there) that
\begin{equation}\label{eqn:limit_V_non_arithmetic}
\lim_{x\to\infty} \frac{V(x)}{e^{\alpha x}} = \frac{1}{\alpha \bar{\nu}}
\end{equation}
in the non-arithmetic case and that
\begin{equation}\label{eqn:limit_V_arithmetic}
\lim_{k\to\infty} \frac{V(kM)}{e^{\alpha kM}} = \frac{M}{1-e^{-\alpha M}} \frac{1}{\bar{\nu}} 
\end{equation}
with the limit along the natural numbers in the arithmetic case.

As $e^{\alpha x} \geq 1$ for all $x\geq 0$ and $V$ is non-decreasing and, thus, bounded on every compact interval, \eqref{eqn:limit_V_non_arithmetic} and \eqref{eqn:limit_V_arithmetic} imply that there exists a constant $C\in(0,\infty)$ such that
\begin{equation}\label{eqn:bounded_V}
\frac{V(x)}{e^{\alpha x}} \leq C
\end{equation}
for all $x\geq 0$. This proves part a).

Next we show b) simultaneously for both the non-arithmetic and the arithmetic cases. To this end we define, for $n\in\mathbb{N}$,
$$
\varrho_n(u) = \frac{1}{\alpha} \log(n) + u
$$
for the non-arithmetic case and
$$
\varrho_n(u) = M \left\lfloor \frac{1}{M\alpha} \log(n) \right\rfloor + M \left\lfloor \frac{u}{M} \right\rfloor
$$
for the arithmetic case. Since $\varrho_n(u)$ is the smallest multiple of $M$ not exceeding $\varrho_n + u$ in the arithmetic case, we can replace the latter term inside the probabilities in b) by $\varrho_n(u)$.

If $X$ is non-arithmetic, we derive from the definitions of $\tau_n$, $V$ and $\varrho_n(u)$ as well as \eqref{eqn:limit_V_non_arithmetic} that
\begin{equation}\label{eqn:V_infinity_non_arithmetic}
\begin{split}
\lim_{n\to\infty} \frac{\tau_n}{n}  \sum_{\ell=1}^{\infty} \lambda^\ell \mathbb{P}\left(S_\ell \leq \varrho_n(u) \right) & = \lim_{n\to\infty} \frac{V\left(\varrho_n(u) \right)}{n} = \lim_{n\to\infty} \frac{e^{\alpha \varrho_n(u)}}{n} \frac{V\left(\varrho_n(u) \right)}{e^{\alpha \varrho_n(u)}} = \frac{e^{\alpha u}}{\alpha \bar{\nu}}.
\end{split}
\end{equation}
For the case that $X$ is arithmetic we have
\begin{equation}\label{eqn:limit_prefactor}
\frac{\tau_n e^{\alpha \varrho_n(u)}}{n} = \frac{\tau_n e^{\alpha \varrho_n + \alpha M \lfloor u/M\rfloor} }{n} = \frac{n e^{-\alpha \varrho_n} e^{\alpha \varrho_n + \alpha M \lfloor u/M\rfloor} }{n} = e^{\alpha M \lfloor u/M\rfloor},
\end{equation}
which, together with \eqref{eqn:limit_V_arithmetic}, leads to 
\begin{equation}\label{eqn:V_infinity_arithmetic}
\begin{split}
\lim_{n\to\infty} \frac{\tau_n}{n}  \sum_{\ell=1}^{\infty} \lambda^\ell \mathbb{P}\left(S_\ell \leq \varrho_n(u) \right) & = \lim_{n\to\infty} \frac{\tau_n V\left(\varrho_n(u) \right)}{n} = \lim_{n\to\infty} \frac{\tau_n e^{\alpha \varrho_n(u)}}{n} \frac{V\left(\varrho_n(u) \right)}{e^{\alpha \varrho_n(u)}} \\
& = \frac{M}{1-e^{-\alpha M}} \frac{e^{\alpha M\lfloor u/M\rfloor}}{\bar{\nu}}.
\end{split}
\end{equation}
In the last step we used that $\varrho_n(u)$ is a multiple of $M$, which allows us to apply \eqref{eqn:limit_V_arithmetic}.

Next we compute for
$$
U_n := \frac{\tau_n}{n}  \sum_{\ell=k_n(z)+1}^\infty \lambda^\ell \mathbb{P}\left(S_\ell \leq \varrho_n(u) \right)
$$
the limit of $(U_n)_{n\in\mathbb{N}}$. Let $(S_\ell')_{\ell\in\mathbb{N}}$ be independent copies of $(S_\ell)_{\ell\in\mathbb{N}}$. We have that
\begin{align*}
U_n & = \frac{\tau_n}{n}  \sum_{\ell=0}^\infty \lambda^{k_n(z)+1+\ell} \mathbb{P}\left(S_{k_n(z)+1+\ell} \leq \varrho_n(u) \right)\\
& = \frac{\tau_n}{n}  \sum_{\ell=0}^\infty \lambda^{k_n(z)+1+\ell} \mathbb{P}\left(S_{k_n(z)+1} + S_\ell' \leq \varrho_n(u) \right) \\
& = \frac{\tau_n}{n} \mathbb{E} \left[ \lambda^{k_n(z)+1} \mathbf{1}\left\{ S_{k_n(z)+1} \leq \varrho_n(u) \right\} \sum_{\ell=0}^\infty \lambda^\ell \mathbb{P}\left( S_\ell' \leq \varrho_n(u)  -S_{k_n(z)+1} \big| S_{k_n(z)+1} \right) \right] \\
& = \frac{\tau_n}{n} \mathbb{E} \left[ \lambda^{k_n(z)+1} \mathbf{1}\left\{ S_{k_n(z)+1} \leq \varrho_n(u) \right\} V\left( \varrho_n(u) -S_{k_n(z)+1}\right) \right] \\
& = \frac{\tau_n e^{\alpha \varrho_n(u)}}{n} \mathbb{E} \left[ \lambda^{k_n(z)+1} \mathbf{1}\left\{ S_{k_n(z)+1} \leq \varrho_n(u) \right\} e^{-\alpha S_{k_n(z)+1}} \frac{V\left(\varrho_n(u)  -S_{k_n(z)+1}\right)}{e^{\alpha \left( \varrho_n(u)  -S_{k_n(z)+1} \right)}} \right].
\end{align*}
Letting $\widetilde{S}_\ell := \sum_{i=1}^\ell \widetilde{X}_i$ for $\ell\in\mathbb{N}$ with i.i.d.\ random variables $(\widetilde{X}_i)_{i\in\mathbb{N}}$ distributed according to $\tilde{F}$ as defined in \eqref{eqn:F_tilde}, we obtain
\begin{equation}\label{eqn:representation_U_n}
U_n = \frac{\tau_n e^{\alpha \varrho_n(u)}}{n} \mathbb{E}\left[ \mathbf{1}\left\{ \widetilde{S}_{k_n(z)+1} \leq \varrho_n(u) \right\} \frac{V\left( \varrho_n(u)  -\widetilde{S}_{k_n(z)+1}\right)}{e^{\alpha \left( \varrho_n(u)  -\widetilde{S}_{k_n(z)+1} \right)}} \right].
\end{equation}
We denote the factors in the expectation by $I_n$ and $V_n$. Recall that $\bar{\nu}$ and $\bar{\sigma}^2$ are the expectation and the variance of a random variable distributed according to $\tilde{F}$. It follows from the central limit theorem that, for $y\in\{0,1\}$,
\begin{align*}
& \lim_{n\to\infty} \mathbb{P}\left( \widetilde{S}_{k_n(z)+1} \leq \varrho_n(u) - y ( \log(n) )^{1/4} \right) \\
& = \lim_{n\to\infty} \mathbb{P}\left( \frac{\widetilde{S}_{k_n(z)+1} - \bar{\nu} (k_n(z)+1)}{\bar{\sigma}\sqrt{k_n(z)+1} } \leq \frac{ \varrho_n(u) - y ( \log(n) )^{1/4} - \bar{\nu} (k_n(z)+1)}{\bar{\sigma}\sqrt{k_n(z)+1} } \right) \\
& = \Phi\left( \lim_{n\to\infty} \frac{ \varrho_n(u) - y ( \log(n) )^{1/4} - \bar{\nu} (k_n(z)+1)}{\bar{\sigma}\sqrt{k_n(z)+1} } \right),
\end{align*}
where $\Phi$ is the distribution function of the standard Gaussian distribution.
The definitions of $\varrho_n(u)$, $k_n(z)$, $\beta$ and $\gamma$ (see \eqref{eq:def_beta_gamma}) imply that
\begin{align*}
& \lim_{n\to\infty} \frac{ \varrho_n(u) - y ( \log(n) )^{1/4} - \bar{\nu} (k_n(z)+1)}{\bar{\sigma}\sqrt{k_n(z)+1} } \\
& = \lim_{n\to\infty} \frac{ \frac{1}{\alpha} \log(n) - y ( \log(n) )^{1/4} - \bar{\nu} (z\sqrt{\beta\log(n)}+\gamma\log(n))}{\bar{\sigma}\sqrt{z\sqrt{\beta\log(n)}+\gamma\log(n)+1} } \\
& = \lim_{n\to\infty} \frac{ \left(\frac{1}{\alpha} - \bar{\nu}\gamma \right) \log(n) - \bar{\nu} z\sqrt{\beta\log(n)}}{\bar{\sigma}\sqrt{\gamma\log(n)} } = -\frac{\bar{\nu}\sqrt{\beta}}{\bar{\sigma}\sqrt{\gamma}}z = -z,
\end{align*}
whence
$$
\lim_{n\to\infty} \mathbb{P}\left( \widetilde{S}_{k_n(z)+1} \leq \varrho_n(u) - y ( \log(n) )^{1/4} \right) = \Phi(-z).
$$
For $y=0$ this yields that $(I_n)_{n\in\mathbb{N}}$ converges in distribution to a random variable $I$ with $\mathbb{P}(I=1)=1-\mathbb{P}(I=0)=\Phi(-z)$. From the case $y=1$ we obtain that the random variables $(I_n')_{n\in\mathbb{N}}$ with
$$
I_n' = \mathbf{1} \left\{ \widetilde{S}_{k_n(z)+1} \leq \varrho_n(u) - ( \log(n) )^{1/4}  \right\}
$$
also converge in distribution to $I$. 

For any constant $c\in\mathbb{R}$ we have
$$
\mathbb{E}[ I_n V_n] = \mathbb{E}[ (I_n-I_n') V_n] + \mathbb{E}\left[ I_n' \left(V_n-c\right)\right] + \mathbb{E}\left[ I_n' c\right].
$$
From \eqref{eqn:bounded_V}, $I_n'\leq I_n$ and $V_n\geq 0$ for $n\in\mathbb{N}$, and the convergence in distribution of $(I_n)_{n\in\mathbb{N}}$ and $(I_n')_{n\in\mathbb{N}}$ to $I$, we derive
$$
0\leq \lim_{n\to\infty} \mathbb{E}[ (I_n-I_n') V_n] \leq C \lim_{n\to\infty} (\mathbb{E}[I_n] - \mathbb{E}[I_n']) = 0
$$
and
$$
\lim_{n\to\infty} \mathbb{E}\left[ I_n' c\right] = c \mathbb{E}[I] = c \Phi(-z),
$$
whereas
\begin{multline*}
\left| \mathbb{E}\left[ I_n' \left(V_n-c\right)\right] \right| \\
 \leq \mathbb{E}\left[ \mathbf{1}\left\{ (\log(n))^{1/4} \leq \varrho_n(u) - \widetilde{S}_{k_n(z)+1} \right\} \left| \frac{V\left( \varrho_n(u)  -\widetilde{S}_{k_n(z)+1}\right)}{e^{\alpha \left( \varrho_n(u)  -\widetilde{S}_{k_n(z)+1}\right)}} - c \right| \right].
\end{multline*}
The right-hand side is bounded by
$$
\sup_{t\geq (\log(n))^{1/4}} \left| \frac{V(t)}{e^{\alpha t}} - c \right| \quad \text{and} \quad \sup_{k\in\mathbb{N}_0: kM\geq (\log(n))^{1/4}} \left| \frac{V(kM)}{e^{\alpha kM}} - c \right|
$$
in the non-arithmetic and the arithmetic cases, respectively. For the choices $c = \frac{1}{\alpha \bar{\nu}}$ and $c=\frac{M}{(1-e^{-\alpha M})\bar{\nu}}$ these expressions vanish as $n\to\infty$ by \eqref{eqn:limit_V_non_arithmetic} and \eqref{eqn:limit_V_arithmetic}. Thus, we have shown that
$$
\lim_{n\to\infty} \mathbb{E}[ I_n V_n] = \frac{\Phi(-z)}{\alpha \bar{\nu}}
$$
in the non-arithmetic case and
$$
\lim_{n\to\infty} \mathbb{E}[ I_n V_n] = \frac{M}{1-e^{-\alpha M} } \frac{\Phi(-z)}{\bar{\nu}}
$$
in the arithmetic case. Together with $\frac{e^{\alpha \varrho_n(u)}}{n} =e^{\alpha u}$ for the non-arithmetic case and \eqref{eqn:limit_prefactor} for the arithmetic case, we obtain from \eqref{eqn:representation_U_n} that
$$
\lim_{n\to\infty} U_n = \frac{e^{\alpha u}}{\alpha \bar{\nu}}\Phi(-z)
$$
in the non-arithmetic case and that
$$
\lim_{n\to\infty} U_n =  \frac{M }{1-e^{-\alpha M} } \frac{e^{\alpha M\lfloor u/M\rfloor}}{\bar{\nu}}\Phi(-z)
$$
in the arithmetic case. Combining these identities with \eqref{eqn:V_infinity_non_arithmetic}, \eqref{eqn:V_infinity_arithmetic} and 
$$
\Lambda((-\infty,z]\times(-\infty,u]) = \begin{cases} \frac{e^{\alpha u}}{\alpha \bar{\nu}}\Phi(z), & \quad \text{$X$ is non-arithmetic},\\ \frac{M }{1-e^{-\alpha M} } \frac{e^{\alpha M\lfloor u/M\rfloor}}{\bar{\nu}}\Phi(z), & \quad \text{$X$ is arithmetic with span $M$}, \end{cases}
$$
completes the proof of part b).
\end{proof}

\subsection{Counting short paths: first moments}\label{sec:first_moments}

We now move on to the calculation of the moments of the prelimit point processes, starting with the first moment. For distinct vertices $u$ and $v$ in $G(n,\lambda/n)$ we denote the existence of an edge between $u$ and $v$ by $u\leftrightarrow v$ and by $X_{\{u,v\}}$ the weight of this edge. For any set $A$ and $k\in\mathbb{N}$, we write $A^k_{\not=}$ for the set of $k$-tuples of pairwise distinct elements of $A$. We define $\mathscr{P}_1^n=\{(v_0,\hdots,v_\ell): \ell\in\{1,\hdots,n-1\}, (v_0,\hdots,v_\ell)\in[n]^{\ell+1}_{\neq}, v_0=1, v_{\ell}=n \}$, which is the set of all potential paths from $1$ to $n$ in the complete graph with vertex set $[n]$. Recall that $\mathcal{P}_1^n$ is the set of all paths from $1$ to $n$ in $G(n,\lambda/n)$. Thus, for $p=(v_0,\hdots,v_\ell)\in\mathscr{P}_1^n$ we have $p\in\mathcal{P}_1^n$ if all edges $\{v_0,v_1\},\hdots,\{v_{\ell-1},v_\ell\}$ are present in $G(n,\lambda/n)$. Then, the hopcount is $H(p)=\ell$ and the total weight $L(p)$ is the sum of the weights assigned to $\{v_0,v_1\},\hdots,\{v_{\ell-1},v_\ell\}$.

\begin{lemma} \label{lem:first_moment}
For $\Psi_n$ defined in \eqref{eq:def_psi_n} and all $z,u\in\mathbb{R}$,
\begin{equation}\label{eqn:limit_first_moment_I}
\lim_{n\to\infty} \tau_n \mathbb{E}[\Psi_n((-\infty,z]\times(-\infty,u])] = \Lambda((-\infty,z]\times(-\infty,u])
\end{equation}
and
\begin{equation}\label{eqn:limit_first_moment_II}
\lim_{n\to\infty} \tau_n \mathbb{E}[\Psi_n(\mathbb{R}\times(-\infty,u])] = \Lambda(\mathbb{R}\times(-\infty,u]),
\end{equation}
where $\tau_n$ and $\Lambda$ are defined by \eqref{eq:def_tau_n} and \eqref{eq:def_Lambda}, respectively.
\end{lemma}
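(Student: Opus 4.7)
\emph{Proof plan.} My strategy is to expand the expectation by linearity over the set $\mathscr{P}_1^n$ of potential paths, exploit the independence of the edge indicators from the edge weights, and then invoke Lemma \ref{lem:renewal}. For any fixed $p=(v_0,\ldots,v_\ell)\in\mathscr{P}_1^n$, the event $\{p\in\mathcal{P}_1^n\}$ has probability $(\lambda/n)^\ell$ and, independently, the total weight $L(p)$ has the same distribution as $S_\ell$. Since the number of potential paths of length $\ell$ from $1$ to $n$ is $(n-2)_{\ell-1}$ (the count of ordered $(\ell-1)$-tuples of distinct interior vertices drawn from $\{2,\ldots,n-1\}$), grouping by $\ell$ yields the exact identity
\begin{equation*}
\tau_n\mathbb{E}[\Psi_n((-\infty,z]\times(-\infty,u])] = \frac{\tau_n}{n}\sum_{\ell=1}^{k_n(z)}\frac{(n-2)_{\ell-1}}{n^{\ell-1}}\lambda^\ell\mathbb{P}(S_\ell\leq\varrho_n+u),
\end{equation*}
together with an analogous identity for $\tau_n\mathbb{E}[\Psi_n(\mathbb{R}\times(-\infty,u])]$ where the sum runs up to $\ell=n-1$.

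For \eqref{eqn:limit_first_moment_I}, since $k_n(z)=O(\log n)$, a direct expansion of the logarithm gives
\begin{equation*}
\frac{(n-2)_{\ell-1}}{n^{\ell-1}}=\prod_{j=2}^{\ell}\Big(1-\frac{j}{n}\Big)=1+O\big((\log n)^2/n\big)\quad\text{uniformly in }1\leq\ell\leq k_n(z).
\end{equation*}
Lemma \ref{lem:renewal}~a) bounds $(\tau_n/n)\sum_{\ell\geq 1}\lambda^\ell\mathbb{P}(S_\ell\leq\varrho_n+u)$ by $Ce^{\alpha u}$, so the combinatorial prefactor may be replaced by $1$ at the cost of an $o(1)$ additive error. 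The remaining sum converges to $\Lambda((-\infty,z]\times(-\infty,u])$ by Lemma \ref{lem:renewal}~b), which proves \eqref{eqn:limit_first_moment_I}.

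For \eqref{eqn:limit_first_moment_II} I would sandwich the expression from both sides. For the upper bound, the estimate $(n-2)_{\ell-1}/n^{\ell-1}\leq 1$ shows that the full sum up to $\ell=n-1$ is at most $(\tau_n/n)(V(\varrho_n+u)-1)$, where $V$ is the function introduced in the proof of Lemma \ref{lem:renewal}; the asymptotics \eqref{eqn:limit_V_non_arithmetic} and \eqref{eqn:limit_V_arithmetic}, combined with the identity $\tau_n e^{\alpha\varrho_n}/n=1$ valid in both the non-arithmetic and the arithmetic case, show that this upper bound converges precisely to $\Lambda(\mathbb{R}\times(-\infty,u])$. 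For the matching lower bound, the monotonicity $\mathbb{E}[\Psi_n(\mathbb{R}\times(-\infty,u])]\geq\mathbb{E}[\Psi_n((-\infty,z]\times(-\infty,u])]$ and \eqref{eqn:limit_first_moment_I} give $\liminf_{n\to\infty}\tau_n\mathbb{E}[\Psi_n(\mathbb{R}\times(-\infty,u])]\geq\Lambda((-\infty,z]\times(-\infty,u])$ for every $z\in\mathbb{R}$, and the right-hand side rises to $\Lambda(\mathbb{R}\times(-\infty,u])$ as $z\to\infty$ by monotone convergence.

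The only subtle steps I anticipate are the uniform estimate of the combinatorial prefactor on the logarithmic window $\ell\leq k_n(z)$ and the realisation that the upper bound in the sandwich argument for \eqref{eqn:limit_first_moment_II} must draw on the renewal asymptotics \eqref{eqn:limit_V_non_arithmetic}--\eqref{eqn:limit_V_arithmetic} established inside the proof of Lemma \ref{lem:renewal}, rather than on the statement of Lemma \ref{lem:renewal}~b) whose cutoff $k_n(z)$ is too restrictive to cover all lengths $\ell\leq n-1$.
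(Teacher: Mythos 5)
Your proposal is correct and mirrors the paper's own proof almost step for step: same expansion over $\mathscr{P}_1^n$, same combinatorial identity with prefactor $(n-2)_{\ell-1}/n^{\ell-1}$, the same appeal to Lemma \ref{lem:renewal}~b) for \eqref{eqn:limit_first_moment_I}, and the same sandwich (upper bound via $(n-2)_{\ell-1}/n^{\ell-1}\le 1$ together with the renewal asymptotics \eqref{eqn:limit_V_non_arithmetic}--\eqref{eqn:limit_V_arithmetic}, lower bound via monotonicity in $z$) for \eqref{eqn:limit_first_moment_II}. The only cosmetic difference is that you control the prefactor with a uniform multiplicative error $1+O((\log n)^2/n)$ while the paper uses the explicit two-sided bound $1\ge (n-2)_{\ell-1}/n^{\ell-1}\ge (1-k_n(z)/n)^{k_n(z)-1}\to 1$; both are equally valid.
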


\begin{proof}
For $n\in\mathbb{N}$ we recall the definition \eqref{eq:def_of_k_n(z)} of $k_n(z)$. Note that
\begin{align*}
& \mathbb{E}[\Psi_n((-\infty,z]\times(-\infty,u])] \\
& = \mathbb{E} \sum_{\ell=1}^{k_n(z)} \sum_{(v_0,\hdots,v_\ell)\in \mathscr{P}_1^n } \mathbf{1}\Big\{v_0\leftrightarrow v_1 \leftrightarrow \hdots \leftrightarrow v_{\ell-1}\leftrightarrow v_\ell, \sum_{i=1}^\ell X_{\{v_{i-1},v_i\}} \leq \varrho_n + u \Big\}.
\end{align*}
For fixed vertices $v_0,\hdots,v_\ell$ we have, by the construction of the Erd\H{o}s-R\'enyi graph and its independence from the weights,
\begin{align*}
& \mathbb{P}\Big(v_0\leftrightarrow v_1 \leftrightarrow \hdots \leftrightarrow v_{\ell-1}\leftrightarrow v_\ell, \sum_{i=1}^\ell X_{\{v_{i-1},v_i\}} \leq \varrho_n + u \Big) \\
& = \mathbb{P}(v_0\leftrightarrow v_1 \leftrightarrow \hdots \leftrightarrow v_{\ell-1}\leftrightarrow v_\ell) \mathbb{P}\Big(\sum_{i=1}^\ell X_{\{v_{i-1},v_i\}} \leq \varrho_n + u \Big) = \bigg( \frac{\lambda}{n}\bigg)^{\ell} \mathbb{P}\Big(S_\ell \leq  \varrho_n + u \Big),
\end{align*}
where $S_\ell$ is the sum of $\ell$ independent copies of $X$. With the observation that there are $(n-2)_{\ell-1}$ ways to choose $v_1,\hdots,v_{\ell-1}$ (as $v_0=1$ and $v_\ell=n$), this yields
$$
\mathbb{E}[\Psi_n((-\infty,z]\times(-\infty,u])] = \sum_{\ell=1}^{k_n(z)} \frac{(n-2)_{\ell-1}}{n^{\ell}} \lambda^\ell \mathbb{P}\left(S_\ell \leq  \varrho_n + u\right).
$$
Together with the observations that, for $\ell\in \{1,\hdots, k_n(z)\}$ and $k_n(z)\leq n$,
$$
1\geq \frac{(n-2)_{\ell-1}}{n^{\ell-1}} \geq \frac{(n-2)_{k_n(z)-1}}{n^{k_n(z)-1}} \geq \bigg( 1 - \frac{k_n(z)}{n} \bigg)^{k_n(z)-1} \, \text{ and} \,  \bigg( 1 - \frac{k_n(z)}{n} \bigg)^{k_n(z)-1} \to1
$$
as $n\to\infty$, where we used $\lim_{n\to\infty} k_n(z)^2/n=0$, this implies
$$
\lim_{n\to\infty} \tau_n \mathbb{E}[\Psi_n((-\infty,z]\times(-\infty,u])] = \lim_{n\to\infty} \frac{\tau_n}{n}  \sum_{\ell=1}^{k_n(z)} \lambda^\ell \mathbb{P}\left(S_\ell \leq  \varrho_n + u \right).
$$
Now Lemma \ref{lem:renewal} b) completes the proof of \eqref{eqn:limit_first_moment_I}. 

To derive \eqref{eqn:limit_first_moment_II}, we can replace $k_n(z)$ by $\infty$ everywhere in the steps above until we bound $(n-2)_{\ell-1}/n^{\ell-1}$. This leads to
$$
\tau_n \mathbb{E}[\Psi_n(\mathbb{R}\times(-\infty,u])] \leq \frac{\tau_n}{n}  \sum_{\ell=1}^\infty \lambda^\ell \mathbb{P}\left(S_\ell \leq  \varrho_n + u \right)
$$
so that, by \eqref{eqn:V_infinity_non_arithmetic} and \eqref{eqn:V_infinity_arithmetic},
$$
\limsup_{n\to\infty} \tau_n \mathbb{E}[\Psi_n(\mathbb{R}\times(-\infty,u])] \leq \Lambda(\mathbb{R}\times(-\infty,u]).
$$
Together with \eqref{eqn:limit_first_moment_I} and the monotonicity in $z$ there, this proves \eqref{eqn:limit_first_moment_II}.
\end{proof}

To compute higher-order factorial moments of $\Psi_n$, one would have to study the joint behaviour of several short paths. This is difficult since the paths could intersect each other in an intricate way. In order to avoid this issue, we next introduce a point process only taking so-called uncrossed paths into account.

We say that a path crosses another path if there exists a maximal joint segment including neither $1$ nor $n$. Here, maximal means that the segment is not part of a larger joint segment and even a single vertex is considered as a segment. For examples of crossing and non-crossing paths we refer to Figure \ref{fig:crossings}.

\begin{figure}
\begin{center}
\begin{subfigure}{.5\textwidth}
\centering
\begin{tikzpicture}[scale=0.75]

\node[shape=circle,draw=black, minimum size=0.4cm] (Node1) at (0,0) {{\tiny $1$}};

\node[shape=circle,draw=black, minimum size=0.4cm] (Node11) at (1,1) {};

\node[shape=circle,draw=black, minimum size=0.4cm] (Node12) at (1,-1) {};

\node[shape=circle,draw=black, minimum size=0.4cm] (Node111) at (2,1.5) {};

\node[shape=circle,draw=black, minimum size=0.4cm] (Node112) at (2,0.5) {};

\node[shape=circle,draw=black, minimum size=0.4cm] (Node121) at (2,-1) {};

\node[shape=circle,draw=black, minimum size=0.4cm] (Node1211) at (3,-0.5) {};

\node[shape=circle,draw=black, minimum size=0.4cm] (Node1212) at (3,-1.5) {};

\node[shape=circle,draw=black, minimum size=0.4cm] (Node1111) at (5,1.5) {};

\node[shape=circle,draw=black, minimum size=0.4cm] (Node1121) at (5,0.5) {};

\node[shape=circle,draw=black, minimum size=0.4cm] (Node12111) at (5,-0.5) {};

\node[shape=circle,draw=black, minimum size=0.4cm] (Node12121) at (5,-1.5) {};

\node[shape=circle,draw=black, minimum size=0.4cm] (Node11111) at (6,1) {};

\node[shape=circle,draw=black, minimum size=0.4cm] (Node121211) at (6.5,-1) {};

\node[shape=circle,draw=black, minimum size=0.4cm] (Node111111) at (7,0.5) {};

\node[shape=circle,draw=black, minimum size=0.4cm] (Noden) at (8,0.5) {{\tiny $n$}};

\path [-, color=green, line width=0.05cm] (Node1.35) edge node[left] {} (Node11.235);

\path [-, color=green, line width=0.05cm] (Node11) edge node[left] {} (Node112);

\path [-, color=green, line width=0.05cm, dashed] (Node112) edge node[left] {} (Node1121);

\path [-, color=green, line width=0.05cm] (Node1121) edge node[left] {} (Node11111);

\path [-, color=green, line width=0.05cm] (Node11111.-25) edge node[left] {} (Node111111.165);

\path [-, color=green, line width=0.05cm] (Node111111) edge node[left] {} (Noden.180);

\path [-, color=blue, line width=0.05cm] (Node1.55) edge node[left] {} (Node11.215);

\path [-, color=blue, line width=0.05cm] (Node11) edge node[left] {} (Node111);

\path [-, color=blue, line width=0.05cm, dashed] (Node111) edge node[left] {} (Node1111);

\path [-, color=blue, line width=0.05cm] (Node1111) edge node[left] {} (Node11111);

\path [-, color=blue, line width=0.05cm] (Node11111.-5) edge node[left] {} (Node111111.145);

\path [-, color=blue, line width=0.05cm] (Node111111.20) edge node[left] {} (Noden.160);

\path [-, color=cyan, line width=0.05cm] (Node1.-35) edge node[left] {} (Node12.125);

\path [-, color=cyan, line width=0.05cm] (Node12.10) edge node[left] {} (Node121.170);

\path [-, color=cyan, line width=0.05cm] (Node121) edge node[left] {} (Node1211);

\path [-, color=cyan, line width=0.05cm, dashed] (Node1211) edge node[left] {} (Node12111);

\path [-, color=cyan, line width=0.05cm] (Node12111) edge node[left] {} (Node111111);

\path [-, color=cyan, line width=0.05cm] (Node111111.-20) edge node[left] {} (Noden.200);

\path [-, color=red, line width=0.05cm] (Node1.-55) edge node[left] {} (Node12.145);

\path [-, color=red, line width=0.05cm] (Node12.-10) edge node[left] {} (Node121.190);

\path [-, color=red, line width=0.05cm] (Node121) edge node[left] {} (Node1212);

\path [-, color=red, line width=0.05cm, dashed] (Node1212) edge node[left] {} (Node12121);

\path [-, color=red, line width=0.05cm] (Node12121) edge node[left] {} (Node121211);

\path [-, color=red, line width=0.05cm] (Node121211) edge node[left] {} (Noden);

\end{tikzpicture}
\end{subfigure}%
\begin{subfigure}{.5\textwidth}
\centering
\begin{tikzpicture}[scale=0.75]

\node[shape=circle,draw=black, minimum size=0.4cm] (Node1) at (0,0) {{\tiny $1$}};

\node[shape=circle,draw=black, minimum size=0.4cm] (Node11) at (1,1) {};

\node[shape=circle,draw=black, minimum size=0.4cm] (Node12) at (1,-0.5) {};

\node[shape=circle,draw=black, minimum size=0.4cm] (Node111) at (2,1.5) {};

\node[shape=circle,draw=black, minimum size=0.4cm] (Node112) at (2,0.5) {};

\node[shape=circle,draw=black, minimum size=0.4cm] (Node121) at (2,-0.5) {};

\node[shape=circle,draw=black, minimum size=0.4cm] (Node112l) at (3,0.5) {};

\node[shape=circle,draw=black, minimum size=0.4cm] (Node112r) at (4,0.5) {};

\node[shape=circle,draw=black, minimum size=0.4cm] (Node1111) at (5,1.5) {};

\node[shape=circle,draw=black, minimum size=0.4cm] (Node1121) at (5,0.5) {};

\node[shape=circle,draw=black, minimum size=0.4cm] (Node12111) at (5,-0.5) {};

\node[shape=circle,draw=black, minimum size=0.4cm] (Node11111) at (6,1) {};

\node[shape=circle,draw=black, minimum size=0.4cm] (Node111111) at (7,0.5) {};

\node[shape=circle,draw=black, minimum size=0.4cm] (Noden) at (8,0.5) {{\tiny $n$}};

\path [-, color=green, line width=0.05cm] (Node1.35) edge node[left] {} (Node11.235);

\path [-, color=green, line width=0.05cm] (Node11) edge node[left] {} (Node112);

\path [-, color=green, line width=0.05cm, dashed] (Node112) edge node[left] {} (Node112l);

\path [-, color=green, line width=0.05cm] (Node112l.10) edge node[left] {} (Node112r.170);

\path [-, color=green, line width=0.05cm, dashed] (Node112r) edge node[left] {} (Node1121);

\path [-, color=green, line width=0.05cm] (Node1121) edge node[left] {} (Node11111);

\path [-, color=green, line width=0.05cm] (Node11111.-25) edge node[left] {} (Node111111.165);

\path [-, color=green, line width=0.05cm] (Node111111) edge node[left] {} (Noden.180);

\path [-, color=blue, line width=0.05cm] (Node1.55) edge node[left] {} (Node11.215);

\path [-, color=blue, line width=0.05cm] (Node11) edge node[left] {} (Node111);

\path [-, color=blue, line width=0.05cm, dashed] (Node111) edge node[left] {} (Node1111);

\path [-, color=blue, line width=0.05cm] (Node1111) edge node[left] {} (Node11111);

\path [-, color=blue, line width=0.05cm] (Node11111.-5) edge node[left] {} (Node111111.145);

\path [-, color=blue, line width=0.05cm] (Node111111.20) edge node[left] {} (Noden.160);

\path [-, color=red, line width=0.05cm] (Node1) edge node[left] {} (Node12);

\path [-, color=red, line width=0.05cm] (Node12) edge node[left] {} (Node121);

\path [-, color=red, line width=0.05cm, dashed] (Node121) edge node[left] {} (Node112l);

\path [-, color=red, line width=0.05cm] (Node112l.-10) edge node[left] {} (Node112r.190);

\path [-, color=red, line width=0.05cm, dashed] (Node112r) edge node[left] {} (Node12111);

\path [-, color=red, line width=0.05cm] (Node12111) edge node[left] {} (Node111111);

\path [-, color=red, line width=0.05cm] (Node111111.-20) edge node[left] {} (Noden.200);

\end{tikzpicture}
\end{subfigure}
\caption{In the left panel, no paths cross each other. In contrast, the two lower paths (in green and red) cross in the panel on the right.}
\label{fig:crossings}
\end{center}
\end{figure}

For $a,w\in\mathbb{R}$ define an $(a,w)$-uncrossed path to be a path between $1$ and $n$ such that for any other path between $1$ and $n$ whose hopcount and total weight do not exceed $\sqrt{\beta\log(n)} a +\gamma \log(n)$ and $\frac{1}{\alpha} \log(n) + w$, respectively, any maximal joint segment of these two paths contains either $1$ or $n$. We introduce the point process
\begin{equation}\label{eq:def_Psi_n_a,w}
\Psi_n^{(a,w)} = \sum\limits_{p \in \mathcal{P}_1^n,\, p \text{ is } (a,w)\text{-uncrossed}} \delta_{\left(\frac{H(p) - \gamma \log(n)}{\sqrt{\beta \log(n)}}, L(p) - \varrho_n \right)}.
\end{equation}
Our next lemma allows us to approximate the original point process $\Psi_n$ by the new point process $\Psi_n^{(a,w)}$, which we will focus on in the sequel.

\begin{lemma} \label{lem:good_paths} For all $a,w,z,u \in \R$ with $z\leq a$ and $u\leq w$,
 $$
\lim_{n\to\infty} \E\left[\big|\Psi_n((-\infty, z] \times (-\infty,u]) - \Psi_n^{(a,w)}((-\infty, z] \times (-\infty,u]) \big| \right] = 0,
 $$
where $\Psi_n$ is defined by \eqref{eq:def_psi_n}.
\end{lemma}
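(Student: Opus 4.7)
The plan is to bound the non-negative difference $\Psi_n((-\infty,z]\times(-\infty,u]) - \Psi_n^{(a,w)}((-\infty,z]\times(-\infty,u])$ in expectation by the expected number of ordered pairs $(p,p')$ of distinct paths in $\mathcal{P}_1^n$ such that $H(p) \le k_n(z)$, $L(p) \le \varrho_n+u$, $H(p') \le k_n(a)$, $L(p') \le \tfrac{1}{\alpha}\log(n)+w$ (or the arithmetic analogue), and $p$ and $p'$ share at least one internal vertex. Every path contributing to the difference witnesses at least one such pair, so this overcount is enough.

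The central step is a structural parametrisation of each pair. The intersection of $p$ and $p'$ decomposes into $k$ maximal joint segments; since both paths share the endpoints $1,n$ and additionally at least one internal vertex, one has $k \ge 3$. Writing $e_{\text{sh}}$ for the number of shared edges and $\ell_1, \ell_1'$ for the non-shared edge counts of $p$ and $p'$, so that $H(p) = \ell_1 + e_{\text{sh}}$ and $H(p') = \ell_1' + e_{\text{sh}}$, the union $U = p \cup p'$ has $\ell_1 + \ell_1' + e_{\text{sh}} - k$ internal vertices and $\ell_1 + \ell_1' + e_{\text{sh}}$ distinct edges. For any fixed combinatorial shape of $U$, the number of labelled realisations is at most $n^{\ell_1+\ell_1'+e_{\text{sh}}-k}$ and the probability that all required edges are present in $G(n,\lambda/n)$ is $(\lambda/n)^{\ell_1+\ell_1'+e_{\text{sh}}}$; their product yields the decisive factor $n^{-k}\lambda^{\ell_1+\ell_1'+e_{\text{sh}}}$, and $k \ge 3$ is what will drive the estimate to zero.

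For the joint weight probability I would decompose $L(p) = W_p + W_{\text{sh}}$ and $L(p') = W_{p'} + W_{\text{sh}}$, where $W_p, W_{p'}, W_{\text{sh}}$ are independent and distributed as $S_{\ell_1}, S_{\ell_1'}, S_{e_{\text{sh}}}$ respectively, condition on $W_{\text{sh}}$, and apply Lemma \ref{lem:renewal}\,a) separately to the two remaining sums to obtain factors $Ce^{\alpha(\varrho_n+u)}e^{-\alpha W_{\text{sh}}}$ and $Ce^{\alpha(\log(n)/\alpha+w)}e^{-\alpha W_{\text{sh}}}$. Taking expectation in $W_{\text{sh}}$ produces $\mathbb{E}[e^{-2\alpha X}]^{e_{\text{sh}}}$, and because $\lambda\mathbb{E}[e^{-2\alpha X}]<1$ (as established in the proof of Lemma \ref{lem:exp_moments_W}), the sum over $e_{\text{sh}}\ge 0$ converges even after multiplication by polylogarithmic combinatorial multiplicities. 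Since $e^{\alpha(\varrho_n+\log(n)/\alpha)} = O(n^2)$, the total contribution from structures with a given $k$ is $O((\log(n))^{c_k} n^{-(k-2)})$, dominated by the $k=3$ term $(\log(n))^{O(1)}/n$, which vanishes. The main obstacle will be the bookkeeping of the structural parametrisation: carefully enumerating the arrangements of the $k$ shared and $k-1$ non-shared segments along $p$ and along $p'$ (including possible permutations of the middle segments) and verifying that the resulting multiplicities remain polynomial in $\ell, \ell' = O(\log(n))$, so they do not overwhelm the $n^{-(k-2)}$ decay extracted from the structural constraints.
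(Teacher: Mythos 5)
Your overall strategy mirrors the paper's: drop the absolute value, overcount by ordered pairs of short paths witnessing the failure of being uncrossed, parametrise the union $p\cup p'$, bound the number of labelled realisations by $n$ raised to the number of free internal vertices and the edge probability by $(\lambda/n)$ raised to the number of distinct edges, decouple the two weight conditions by conditioning on the shared weight and applying Lemma~\ref{lem:renewal}\,a) twice, and observe that the shape multiplicity is polylogarithmic so that the factor $n^{-k}$ extracted from the counting/edge-probability trade-off drives the bound to zero. Your parametrisation by $k$, the number of maximal joint segments, is a harmless reparametrisation of the paper's, which uses $q$, the number of excursions of the crossing path $\tilde p$ away from $p$; these satisfy $k=q+1$, and the decisive power of $n$ is the same.

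There is, however, a genuine gap in the derivation of $k\ge 3$. You overcount by pairs $(p,p')$ that merely \emph{share an internal vertex}, and then claim that sharing $1$, $n$ and an additional internal vertex forces $k\ge 3$. That implication is false: for $p=(1,v,a,n)$ and $p'=(1,v,b,n)$ with $a\neq b$ the two paths share the internal vertex $v$, yet the maximal joint segments are $\{1,v\}$ (containing $1$) and $\{n\}$, so $k=2$. A shared internal vertex may simply lie in the joint segment attached to $1$ or to $n$; sharing an internal vertex is necessary but not sufficient for crossing. Including the $k=2$ pairs is fatal for your bound: running your own estimate with $k=2$ gives $n^{-2}\cdot O\big(e^{\alpha\varrho_n}\cdot n\big)$, which is $O(1)$ and bounded away from zero, not $o(1)$. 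Indeed the $k=2$ pairs are exactly the non-crossing pairs, and Lemma~\ref{lem:not_crossing} shows that their normalised expected number converges to the strictly positive constant $(M^{(2)})^2$ times a product of $\Lambda$-measures, so they genuinely do not vanish. The fix is to retain the crossing condition in the overcount, which the definition of $(a,w)$-uncrossed supplies directly: if $p'$ crosses $p$, there is a maximal joint segment containing neither $1$ nor $n$, and together with the joint segments at $1$ and at $n$ this gives $k\ge 3$ (equivalently, $q\ge 2$ in the paper's notation). With that correction your plan is sound and essentially reproduces the paper's argument.
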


\begin{proof}
Since every point of $\Psi^{(a,w)}_n$ is also a point of $\Psi_n$, we may drop the absolute value in the expectation. We have that
\begin{align*}
& \E\left[\Psi_n((-\infty,z] \times (-\infty,u] ) - \Psi_n^{(a,w)}((-\infty,z] \times (-\infty,u]) \right] \\
& = \mathbb{E} \sum\limits_{p \in \mathcal{P}_1^n} \mathbf{1}\Big\{ H(p)\leq k_n(z), L(p) \leq \varrho_n +u,  p \text{ is not } (a,w)\text{-uncrossed}\Big\} \\
& \leq \mathbb{E}\sum\limits_{(p, \tilde{p}) \in (\mathcal{P}_1^n)^2_{\neq}} \mathbf{1}\Big\{ H(p),H(\tilde{p})\leq k_n(a), L(p), L(\tilde{p}) \leq \varrho_n +w,  \tilde{p} \text{ crosses } p\Big\} \\
& =  \sum\limits_{(p, \tilde{p}) \in (\mathscr{P}_1^n)^2_{\neq}} \mathbb{P}\Big( H(p),H(\tilde{p})\leq k_n(a), L(p), L(\tilde{p}) \leq \varrho_n +w,  \tilde{p} \text{ crosses } p\Big). 
\end{align*}
In the following we let $p$ and $\tilde{p}$ have hopcounts $\ell\leq k_n(a)$ and $\tilde{\ell}\leq k_n(a)$, respectively, and assume that they have exactly $\overline{\ell}$ joint edges. Note that $\overline{\ell}\leq \min\{\ell,\tilde{\ell}\}-1$ as one of the paths cannot contain all edges of the other path. Moreover, we suppose that $\tilde{p}$ leaves and joins $p$ in total $q\geq 2$ times. The number of potential paths $p\in\mathscr{P}_1^n$ with hopcount $\ell$ is bounded by $n^{\ell-1}$. In order to find a path $\tilde{p}\in\mathscr{P}_1^n$ for given $p$, we have to choose $q$ pairs of vertices where $\tilde{p}$ leaves and joins $p$ and to determine their positions in $p$. This number of choices is bounded by $k_n(a)^{4q}$. Since then $\tilde{\ell}-\overline{\ell}-q$ vertices remain to choose, the total number of paths $\tilde{p}$ for given $p$ is bounded by $k_n(a)^{4q} n^{\tilde{\ell}-\overline{\ell}-q}$. The number of choices for $p$ and $\tilde{p}$ is, thus, bounded by $k_n(a)^{4q} n^{\ell+\tilde{\ell}-\overline{\ell}-q-1}$. Since they involve $\ell+\tilde{\ell}-\overline{\ell}$ edges, the probability to observe paths $p$ and $\tilde{p}$ in $G(n,\lambda/n)$ is $(\lambda/n)^{\ell+\tilde{\ell}-\overline{\ell}}$. Moreover, we have that
$$
\mathbb{P}\Big( L(p), L(\tilde{p}) \leq \varrho_n +w\Big) = \mathbb{P}\Big(S_{\ell-\overline{\ell}} + \overline{S}_{\overline{\ell}}, \widetilde{S}_{\tilde{\ell}-\overline{\ell}} + \overline{S}_{\overline{\ell}} \leq \varrho_n + w\Big),
$$
where $S_{\ell-\overline{\ell}}$, $\overline{S}_{\overline{\ell}}$ and $\widetilde{S}_{\tilde{\ell}-\overline{\ell}}$ are independent and have the distributions of the sums of $\ell-\overline{\ell}$, $\overline{\ell}$ and $\tilde{\ell}-\overline{\ell}$ independent copies of $X$, respectively. Here one can think of $\overline{S}_{\overline{\ell}}$ as the sum of weights of shared edges, while $S_{\ell-\overline{\ell}}$ and $\widetilde{S}_{\tilde{\ell}-\overline{\ell}}$ represent the sums of weights of edges only being part of either $p$ or $\tilde{p}$. Combining the considerations from above we see that
\begin{align*}
& \E\left[\Psi_n((-\infty,z] \times (-\infty,u]) - \Psi_n^{(a,w)}((-\infty,z] \times (-\infty,u]) \right] \\
& \leq \sum_{\ell=1}^{k_n(a)} \sum_{\tilde{\ell}=1}^{k_n(a)} \sum_{\overline{\ell}=0}^{\min\{\ell,\tilde{\ell}\}-1} \sum_{q=2}^{k_n(a)} k_n(a)^{4q} n^{\ell+\tilde{\ell}-\overline{\ell}-q-1} \frac{\lambda^{\ell+\tilde{\ell}-\overline{\ell}}}{n^{\ell+\tilde{\ell}-\overline{\ell}}} \mathbb{P}\left(S_{\ell-\overline{\ell}} + \overline{S}_{\overline{\ell}}, \widetilde{S}_{\tilde{\ell}-\overline{\ell}} + \overline{S}_{\overline{\ell}} \leq \varrho_n + w\right) \\
& \leq \frac{1}{n} \sum_{\ell=1}^{k_n(a)} \sum_{\tilde{\ell}=1}^{k_n(a)} \sum_{\overline{\ell}=0}^{\min\{\ell,\tilde{\ell}\}-1} \sum_{q=2}^{k_n(a)} \bigg(\frac{k_n(a)^4}{n}\bigg)^{q} \lambda^{\ell+\tilde{\ell}-\overline{\ell}} \mathbb{P}\left(S_{\ell-\overline{\ell}} + \overline{S}_{\overline{\ell}}, \widetilde{S}_{\tilde{\ell}-\overline{\ell}} + \overline{S}_{\overline{\ell}} \leq \varrho_n + w\right) \\
& \leq \frac{k_n(a)^8}{n-k_n(a)^4} \frac{1}{n^2} \sum_{j_1=1}^\infty \sum_{j_2=1}^\infty \sum_{j_3=0}^\infty \lambda^{j_1+j_2+j_3} \mathbb{P}\left(S_{j_1}+\overline{S}_{j_3}, \widetilde{S}_{j_2}+\overline{S}_{j_3} \leq \varrho_n + w\right) \\
& \leq \frac{k_n(a)^8}{n-k_n(a)^4} \mathbb{E} \sum_{j_3=0}^\infty \lambda^{j_3}  \frac{1}{n} \sum_{j_1=1}^\infty \lambda^{j_1} \mathbb{P}\left( S_{j_1}\leq \varrho_n + w - \overline{S}_{j_3} | \overline{S}_{j_3}\right) \\
& \hspace{4cm} \times \frac{1}{n} \sum_{j_2=1}^\infty \lambda^{j_2} \mathbb{P}\left( \widetilde{S}_{j_2}\leq \varrho_n + w - \overline{S}_{j_3} | \overline{S}_{j_3}\right).
\end{align*}
It follows from Lemma \ref{lem:renewal} a) that there exists a constant $C$ such that the right-hand side is bounded by
\begin{align*}
\frac{e^{2\alpha \varrho_n}}{n^2} \frac{ C^2 k_n(a)^8}{n-k_n(a)^4}\sum_{j_3=0}^\infty \lambda^{j_3} \mathbb{E}[ e^{2\alpha w - 2\alpha \widetilde{S}_{j_3}}] & = \frac{e^{2\alpha \varrho_n}}{n^2} \frac{ C^2 e^{2\alpha w} k_n(a)^8}{n-k_n(a)^4}\sum_{j=0}^\infty \big(\lambda \mathbb{E}[e^{- 2\alpha X}] \big)^j \\
& = \frac{e^{2\alpha \varrho_n}}{n^2} \frac{ C^2 e^{2\alpha w} k_n(a)^8}{n-k_n(a)^4} \frac{1}{1-\lambda \mathbb{E}[e^{- 2\alpha X}]},
\end{align*}
where we used $\lambda\E[e^{-2\alpha X}]<1$. Since $e^{\alpha \varrho_n}/n$ is bounded, the right-hand side converges to zero as $n\to\infty$.
\end{proof}

Lemma \ref{lem:good_paths} basically ensures that when studying several short paths it is sufficient to assume that they are joint at the beginning and the end and run separately in between. A similar observation is made in \cite{hof24} for some variance calculations for the number of paths with a fixed hopcount in inhomogeneous random graphs, where only pairs of paths with this behaviour are asymptotically relevant (see also \cite{bhk15,hk17} for similar arguments for configuration models). We emphasise that these paths counting arguments differ significantly from ours as they do not include any weights and cover only paths with a fixed hopcount.

\subsection{Some random variables associated with trees}\label{sec:trees}

In the following, we are interested in non-crossing paths as in the left panel of Figure \ref{fig:crossings} where the edges in the neighbourhoods of $1$ and $n$ that are used by more than one path form trees. To describe such situations, we introduce in this subsection marked trees and associated random variables. The latter will arise in the limits of the factorial moments of $\Psi_n^{(a,w)}$ in the next subsection. Here, we also show that they coincide with the moments of $W$.


Define, for $r \ge 2$, the set $\mathcal{T}_r$ of finite marked trees such that marks of all vertices are subsets of $[r]$ with a cardinality of at least $2$, the mark of the root is $[r]$, and marks of children are pairwise disjoint subsets of the mark of their parent. Note that this definition allows for a vertex to have exactly one child that inherits its exact mark. Figure \ref{fig:trees} shows two elements of $\mathcal{T}_4$.

\begin{figure}
\begin{center}
\begin{tikzpicture}[scale=0.7]

\node[shape=circle,draw=black, minimum size=1cm] (NodeTop) at (0,0) {{\footnotesize $1, 2, 3, 4$}};

\node[shape=circle,draw=black, minimum size=1cm] (NodeLeft) at (-2,-2) {{\footnotesize $1, 2$}};

\node[shape=circle,draw=black, minimum size=1cm] (NodeRight) at (2,-2) {{\footnotesize $3, 4$}};

\node[shape=circle,draw=black, minimum size=1cm] (NodeLeftLeft) at (-2,-4.5) {{\footnotesize $1, 2$}};

\path [-, line width=0.05cm] (NodeTop) edge node[left] {} (NodeLeft);

\path [-, line width=0.05cm] (NodeTop) edge node[left] {} (NodeRight);

\path [-, line width=0.05cm] (NodeLeft) edge node[left] {} (NodeLeftLeft);

\node[shape=circle,draw=black, minimum size=1cm] (NodeTopII) at (8,0) {{\footnotesize $1, 2, 3, 4$}};

\node[shape=circle,draw=black, minimum size=1cm] (NodeCenterII) at (8,-2.5) {{\footnotesize $2, 3, 4$}};

\node[shape=circle,draw=black, minimum size=1cm] (NodeBottomII) at (8,-5) {{\footnotesize $3, 4$}};

\path [-, line width=0.05cm] (NodeTopII) edge node[left] {} (NodeCenterII);

\path [-, line width=0.05cm] (NodeCenterII) edge node[left] {} (NodeBottomII);

\end{tikzpicture}

\caption{Two trees from $\mathcal{T}_4$ describing the behaviour of the non-crossing paths from the left panel of Figure \ref{fig:crossings} in the neighbourhoods of $1$ and $n$.}

\label{fig:trees}
\end{center}
\end{figure}

For $r$ non-crossing paths from $1$ to $n$ two trees from $\mathcal{T}_r$ arise in the following way. They contain all vertices in the neigbourhood of $1$ or $n$, respectively, that are traversed by more than one path and the marks of a vertex are the numbers of these paths. The trees in Figure \ref{fig:trees} describe the behaviour of the paths from the left panel of Figure \ref{fig:crossings} in the neighbourhoods of $1$ (left) and $n$ (right), where the paths are numbered from the bottom to the top.

For a given tree $T\in \mathcal{T}_r$ with $r\geq 2$ we associate with every edge an independent copy of $X$. For $I\subseteq [r]$ with $|I|\geq 2$ we denote by $S_I$ the sum of such random variables belonging to edges whose vertex with the larger distance to the root has mark $I$. 

The following lemma, where $\alpha$ and $\lambda$ are as throughout the rest of the paper, ensures the existence of some sums associated with the marked trees that will play a crucial role in the sequel and provides an important recursive formula.

\begin{lemma} \label{lem:recursion_Mr}
For $r\geq 2$ the series
\begin{equation} \label{eq:definition_Mr}
M^{(r)} := \sum_{T \in \mathcal{T}_r} \lambda^{|T|-1} \E\left[e^{-\alpha \sum\limits_{I \subseteq [r], \, |I|\ge 2} |I| S_I} \right]
\end{equation}
converges. Moreover, one has for any $r\ge 2$ the recursion
\begin{equation} \label{eq:recursion_Mr}
M^{(r)} = \frac{1}{1-\lambda \E\left[e^{-r \alpha X} \right]} \sum\limits_{\pi \in \Pi(r) \setminus \{[r]\}} \prod_{I \in \pi} \lambda \E\left[e^{-\alpha |I] X}\right] M^{(|I|)}
\end{equation}
with $M^{(1)}:=1$.
\end{lemma}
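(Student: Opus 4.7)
The plan is to proceed by induction on $r$, establishing convergence and the recursion simultaneously. Three ingredients drive the argument: (i) the edge-weights along $T$ are independent, so $\mathbb{E}[e^{-\alpha \sum_{I} |I| S_I}]$ factors as $\prod_{v \text{ non-root of } T} \mathbb{E}[e^{-\alpha |I(v)| X}]$, indexed by non-root vertices $v$ (each carrying one edge to its parent); (ii) $\lambda \mathbb{E}[e^{-\alpha X}] = 1$ by the definition of $\alpha$, so setting $M^{(1)} = 1$ makes singletons contribute a trivial factor $\lambda \mathbb{E}[e^{-\alpha X}] M^{(1)} = 1$; and (iii) $\lambda \mathbb{E}[e^{-r\alpha X}] < 1$ for $r \geq 2$, since $R$ is strictly decreasing (the weights are non-degenerate and non-negative), so that the denominator in the recursion is positive.

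For the base case $r=2$, every mark must equal $\{1,2\}$, so $\mathcal{T}_2$ consists of linear chains and $M^{(2)} = \sum_{n \geq 1}(\lambda \mathbb{E}[e^{-2\alpha X}])^{n-1} = (1 - \lambda \mathbb{E}[e^{-2\alpha X}])^{-1}$, matching the claimed recursion (the only non-trivial partition being $\{\{1\},\{2\}\}$). For the inductive step, I would partition $\mathcal{T}_r = \mathcal{T}_r^A \sqcup \mathcal{T}_r^B$, where $\mathcal{T}_r^A$ consists of trees whose root has exactly one child inheriting the mark $[r]$, and $\mathcal{T}_r^B$ is its complement. Every $T \in \mathcal{T}_r$ decomposes uniquely as an initial chain $v_0, v_1, \dots, v_{m-1}$ of vertices with mark $[r]$ (each, except possibly $v_{m-1}$, having a single $[r]$-child) followed by a subtree $T^\ast \in \mathcal{T}_r^B$ rooted at $v_{m-1}$. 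Collecting the $(m-1)$ chain-edges yields a factor $(\lambda \mathbb{E}[e^{-r\alpha X}])^{m-1}$, so summing over $m \geq 1$ gives
\[
M^{(r)} \;=\; \frac{1}{1 - \lambda \mathbb{E}[e^{-r\alpha X}]} \; M^{(r)}_B,
\]
where $M^{(r)}_B$ is the (not yet shown finite) sum over $\mathcal{T}_r^B$.

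The factorisation argument over subtrees then gives, for $T \in \mathcal{T}_r^B$ with root having children of marks $\mathcal{I} = \{I_1, \dots, I_k\} \neq \{[r]\}$ (pairwise disjoint, each $|I_j| \geq 2$, possibly $k=0$), that the subtree rooted at the $j$-th child is an element of $\mathcal{T}_{|I_j|}$ after relabelling. Summing contributions and recombining produces
\[
M^{(r)}_B \;=\; \sum_{\mathcal{I} \neq \{[r]\}} \prod_{I \in \mathcal{I}} \lambda \mathbb{E}[e^{-\alpha|I|X}] \, M^{(|I|)},
\]
where $\mathcal{I}$ ranges over all (possibly empty) collections of pairwise disjoint subsets of $[r]$ of size at least $2$. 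Every $|I|$ in the product satisfies $|I| \leq r-1$, so each $M^{(|I|)}$ is finite by the inductive hypothesis and $M^{(r)}_B$ is a finite sum; together with the geometric-series identity above, this establishes convergence of $M^{(r)}$.

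To reach the recursion in the form \eqref{eq:recursion_Mr}, I would note that collections $\mathcal{I}$ of pairwise disjoint subsets of $[r]$ with parts of size $\geq 2$ are in bijection with partitions $\pi \in \Pi(r)$ via adjoining the singletons $\{j\}$ for $j \in [r] \setminus \bigcup \mathcal{I}$. Because $\lambda \mathbb{E}[e^{-\alpha X}] M^{(1)} = 1$, each singleton contributes $1$ to the product, so the sum over $\mathcal{I} \neq \{[r]\}$ equals the sum over $\pi \in \Pi(r) \setminus \{[r]\}$ of the full product over all blocks; substituting into the expression for $M^{(r)}$ yields \eqref{eq:recursion_Mr}. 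The main obstacle is purely bookkeeping—tracking that the vertex/edge counts match under the root-decomposition and that the singleton identity legitimately reconciles the tree-sum (which only allows marks of size $\geq 2$) with the partition-sum (which includes singletons); once these are set up carefully, no analytic subtleties remain beyond the strict inequality $\lambda \mathbb{E}[e^{-r\alpha X}] < 1$.
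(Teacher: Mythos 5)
Your proof is correct, and its recursive core is the same as the paper's: split each tree in $\mathcal{T}_r$ into an initial chain of $[r]$-marked vertices (yielding the geometric factor $(\lambda\E[e^{-r\alpha X}])^{m-1}$) followed by the subtree rooted at the last chain vertex, factorise that subtree over the subtrees below the root's children (each isomorphic, after relabelling, to an element of some $\mathcal{T}_{|I|}$ with $|I|\le r-1$), and pass from subpartitions $\mathcal{I}$ to full partitions $\pi$ by adjoining singletons, each contributing $\lambda\E[e^{-\alpha X}]M^{(1)}=1$. Where you genuinely diverge is the convergence claim. The paper dispatches finiteness of $M^{(r)}$ first and independently, by observing that a tree is uniquely determined by its mark-count tuple $(N(T,I))_I$ (so the map $T\mapsto(N(T,I))_I$ is injective) and bounding the sum over $\mathcal{T}_r$ by the finite product $\prod_{|I|\ge2}\sum_{j\ge0}(\lambda\E[e^{-\alpha|I|X}])^j\le\bigl(\sum_{j\ge0}(\lambda\E[e^{-2\alpha X}])^j\bigr)^{2^r}$. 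You instead fold convergence into the same induction on $r$ that yields the recursion. This avoids the combinatorial uniqueness observation entirely and is fully rigorous: all terms of \eqref{eq:definition_Mr} are non-negative, so the rearrangements into chain length, root's children and subtrees are justified by Tonelli with no prior convergence estimate needed, and the inductive hypothesis makes each $M^{(|I|)}$, $|I|\le r-1$, finite. Both routes work; yours is slightly more self-contained, while the paper's separates the analysis (convergence) cleanly from the algebra (recursion). One cosmetic remark: when introducing $\mathcal{T}_r^B$ it is worth saying explicitly that the root having no single child of mark $[r]$ is equivalent to no child having mark $[r]$ at all (children carry pairwise disjoint marks), so every mark in $\mathcal{I}$ has size at most $r-1$ — this is exactly what makes the inductive hypothesis applicable and is the real content of $\mathcal{I}\ne\{[r]\}$.
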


\begin{proof}
We first prove that the series in \eqref{eq:definition_Mr} is (absolutely) convergent. For a tree $T\in\mathcal{T}_r$ and $I\subseteq [r]$ with $|I| \ge 2$ let $N(T,I)$ be the number of vertices of $T$ with mark $I$ that are different from the root. By the independence of the random variables associated with the edges, we have 
$$
\E\left[e^{-\alpha \sum\limits_{I \subseteq [r], \, |I|\ge 2} |I| S_I} \right] = \prod_{I\subseteq [r], |I|\geq 2} \E\left[e^{-\alpha |I| S_I} \right] = \prod_{I\subseteq [r], |I|\geq 2} \E\left[e^{-\alpha |I| X} \right]^{N(T,I)}.
$$
Note that a tree $T\in\mathcal{T}_r$ is uniquely determined by $(N(T,I))_{I\subseteq[r], |I|\geq 2}$, although there does not necessarily exist a tree for any particular choice of these numbers. Thus, we obtain
\begin{align*}
\sum_{T \in \mathcal{T}_r} \lambda^{|T|-1} \E\left[e^{-\alpha \sum\limits_{I \subseteq [r], \, |I|\ge 2} |I| S_I} \right] &\leq \prod_{I\subseteq [r], |I|\geq 2} \sum_{j=0}^\infty \lambda^j \E\left[e^{-\alpha |I| X} \right]^{j}\\
& \leq \bigg( \sum_{j=0}^\infty \lambda^j \E\left[e^{-2\alpha X} \right]^{j}  \bigg)^{2^r}.
\end{align*}
Because $\lambda \E\big[ e^{-2\alpha X}\big]<1$ the geometric series on the right-hand side converges, whence the series in \eqref{eq:definition_Mr} is also convergent.

Next we establish the recursive relation \eqref{eq:recursion_Mr}. Recall that a tree $T\in\mathcal{T}_r$ has $N(T,[r])$ vertices with mark $[r]$ that are different from the root. We consider the vertex with mark $[r]$ that does not have a child with mark $[r]$. The marks of its children form a subpartition $\pi^*$ of $[r]$ where each block has at least the size two and at most $r-1$. We write $\Pi^*_{\geq 2}(r)$ for these subpartitions of $[r]$. Note that in contrast to a partition the union of all blocks of a subpartition does not need to be the whole set. For each $J\in \pi^*$ the child with mark $J$ is the root of a tree that is isomorphic to a tree from $\mathcal{T}_{|J|}$. Thus every tree $T\in\mathcal{T}_r$ is given by the length of the path at the beginning, whose vertices all have mark $[r]$, a subpartition $\pi^*\in\Pi^*_{\geq 2}(r)$ and trees $T_J\in\mathcal{T}_{|J|}$ for $J\in\pi^*$. This implies
\begin{align*}
M^{(r)} & = \sum_{T \in \mathcal{T}_r} \lambda^{|T|-1} \E\left[e^{-\alpha \sum\limits_{I \subseteq [r], \, |I|\ge 2} |I| S_I} \right] \\
& = \sum_{k=0}^\infty \lambda^k \E\big[e^{-\alpha r X} \big]^k \sum_{\pi^*\in\Pi_{\geq 2}^*(r)} \prod_{J\in\pi^*} \lambda \E\big[e^{-\alpha |J| X} \big] \sum_{T_J \in \mathcal{T}_{|J|}} \lambda^{|T_J|-1} \E\left[e^{-\alpha \sum\limits_{I \subseteq J, \, |I|\ge 2} |I| S_I} \right].
\end{align*}
Using the formula for the geometric series together with $\lambda \E\big[e^{-\alpha r X} \big]<1$ and the observation that the last sum is $M^{(|J|)}$, we obtain
$$
M^{(r)} = \frac{1}{1 - \lambda \E\big[e^{-\alpha r X} \big]} \sum_{\pi^*\in\Pi^*_{\geq 2}(r)} \prod_{J\in\pi^*} \lambda \E\big[e^{-\alpha |J| X} \big] M^{(|J|)}.
$$
We can extend each subpartition $\pi^*\in\Pi^*_{\geq 2}(r)$ to a partition $\pi\in\Pi(r)\setminus\{[r]\}$ by adding the remaining elements of $[r]$ as singletons. Since the additional factors in the product above are all one, we see that 
$$
M^{(r)} = \frac{1}{1 - \lambda \E\big[e^{-\alpha r X} \big]} \sum_{\pi\in\Pi(r)\setminus\{[r]\}} \prod_{J\in\pi} \lambda \E\big[e^{-\alpha |J| X} \big] M^{(|J|)},
$$
which completes the proof.
\end{proof}

The next result, which links the quantities $M^{(r)}$, $r\in\mathbb{N}$, and the moments of $W$, will be crucial see to that the limits of the factorial moments of $\Psi_n^{(a,w)}$ expressed in terms of $M^{(r)}$ are the same as the factorial moments of the limiting Cox processes derived in Section \ref{sec:moments_limit}, which involve moments of $W$.

\begin{coro}\label{cor:W_M_r}
Let $W$ be as in Lemma \ref{lem:exp_moments_W}. For any $r\in\mathbb{N}$,
$$
\E[W^r] = M^{(r)}.
$$
\end{coro}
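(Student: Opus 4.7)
The proof proceeds by induction on $r$, exploiting the fact that the sequences $(\E[W^r])_{r\in\mathbb{N}}$ and $(M^{(r)})_{r\in\mathbb{N}}$ satisfy the same recursion with the same initial condition.

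The base case $r=1$ is immediate: $\E[W^1] = 1$ by Lemma \ref{lem:exp_moments_W} (and by the definition $\E[W]=1$ of the fixed point), while $M^{(1)} = 1$ by the convention adopted in Lemma \ref{lem:recursion_Mr}. For the inductive step, suppose $r \ge 2$ and $\E[W^k] = M^{(k)}$ for every $k \in \{1,\dots,r-1\}$. The crucial combinatorial observation is that every partition $\pi \in \Pi(r) \setminus \{[r]\}$ has all blocks $I \in \pi$ of size $|I| \le r-1$, so the right-hand sides of the recursions in Lemmas \ref{lem:recursion_moments_W} and \ref{lem:recursion_Mr} depend only on $\E[W^{|I|}]$ and $M^{(|I|)}$ for $|I| < r$, respectively. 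By the induction hypothesis these factors agree block by block, and hence
\begin{align*}
\E[W^r] &= \frac{1}{1-\lambda \E[e^{-r\alpha X}]} \sum_{\pi \in \Pi(r) \setminus \{[r]\}} \prod_{I \in \pi} \lambda \E[e^{-|I|\alpha X}] \E[W^{|I|}] \\
&= \frac{1}{1-\lambda \E[e^{-r\alpha X}]} \sum_{\pi \in \Pi(r) \setminus \{[r]\}} \prod_{I \in \pi} \lambda \E[e^{-|I|\alpha X}] M^{(|I|)} = M^{(r)}.
\end{align*}

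There is no real obstacle: the two lemmas have already done all the work, and the only thing to check is that the recursions are \emph{identical} and that they express $\E[W^r]$ (respectively $M^{(r)}$) purely in terms of strictly smaller-order quantities, which is guaranteed by the exclusion of the trivial partition $[r]$. The denominator $1-\lambda\E[e^{-r\alpha X}]$ is strictly positive for all $r \ge 2$ because $\lambda \E[e^{-r\alpha X}] \le \lambda \E[e^{-2\alpha X}] < 1$ by the definition of $\alpha$ and the strict monotonicity of $R$, so no division-by-zero issue arises.
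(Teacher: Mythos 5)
Your proof is correct and takes essentially the same approach as the paper: both observe that Lemmas \ref{lem:recursion_moments_W} and \ref{lem:recursion_Mr} give the same recursion for $(\E[W^r])_{r}$ and $(M^{(r)})_{r}$ with the same initial value, and conclude by induction. You merely spell out the induction and the positivity of the denominator more explicitly than the paper does.
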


\begin{proof}
By Lemma \ref{lem:recursion_moments_W} and Lemma \ref{lem:recursion_Mr}, $(\E[W^r])_{r\in\mathbb{N}}$ and $(M^{(r)})_{r\in\mathbb{N}}$ satisfy the same recursion. Together with $M^{(1)}=\E[W]=1$ this completes the proof.
\end{proof}

\subsection{Counting short paths: factorial moments}\label{sec:higher_moments}

In this subsection we compute higher order factorial moments of $\Psi_n^{(a,w)}$. Recall that two paths $p_1,p_2\in\mathscr{P}_1^n$ cross if they have a maximal joint segment containing neither $1$ nor $n$, i.e., if there exists a vertex $v$ that is neither $1$ nor $n$ and belongs to both $p_1$ and $p_2$ such that the segments from $1$ to $v$ in $p_1$ and $p_2$ are not identical and such that the segments from $v$ to $n$ in $p_1$ and $p_2$ are not identical. For more than two paths we say that they cross if any two of them cross. The next lemma is a crucial step for the computation of the factorial moments of $\Psi_n^{(a,w)}$.

\begin{lemma}\label{lem:not_crossing}
For $r\in\mathbb{N}$ and $z_1,\hdots,z_r, u_1,\hdots,u_r\in\mathbb{R}$,
\begin{align*}
& \lim_{n\to\infty} \tau_n^r \mathbb{E} \sum_{(p_1,\hdots,p_r)\in(\mathcal{P}_1^n)^r_{\neq}} \mathbf{1} \{p_1,\hdots,p_r \text{ do not cross and } H(p_i)\leq k_n(z_i)\text{ for } i\in\{1,\hdots,r\}\} \\
& \hspace{3cm} \times \mathbf{1}\left\{ L(p_i) \leq \varrho_n + u_i \text{ for } i\in\{1,\hdots,r\} \right\} \\
& = \big( M^{(r)} \big)^2 \prod_{i=1}^r \Lambda((-\infty,z_i]\times(-\infty,u_i])
\end{align*}
with $M^{(r)}$ as in Lemma \ref{lem:recursion_Mr}, and where $\varrho_n$, $\tau_n$, $\Lambda$ and $k_n$ are as defined in \eqref{eq:def_rho} and \eqref{eq:def_tau_n}--\eqref{eq:def_of_k_n(z)}, respectively.
\end{lemma}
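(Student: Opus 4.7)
The strategy is to decompose $r$ non-crossing paths according to their combinatorial shape at the two endpoints. Given non-crossing paths $p_1, \ldots, p_r$ from $1$ to $n$, their shared initial segments form a marked tree $T_1 \in \mathcal{T}_r$ rooted at vertex $1$: a vertex $v$ belongs to $T_1$ precisely when at least two of the paths traverse $v$ and, moreover, the segments from $1$ to $v$ along those paths coincide, the mark of $v$ being the set of such indices. A symmetric construction near vertex $n$ yields $T_2 \in \mathcal{T}_r$. The non-crossing hypothesis ensures that any vertex shared by two paths lies in $T_1 \cup T_2$, so each path decomposes uniquely as $p_i = q_i^{(1)} \circ m_i \circ q_i^{(2)}$ with middle segments $m_1, \ldots, m_r$ pairwise vertex-disjoint and disjoint from $T_1 \cup T_2$ except at their endpoints. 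The expectation in the statement therefore rewrites as a sum over all admissible configurations $(T_1, T_2, \text{vertex labellings}, m_1, \ldots, m_r)$ satisfying the hopcount and weight constraints.

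For a fixed pair $(T_1, T_2) \in \mathcal{T}_r \times \mathcal{T}_r$, the number of labellings of the two trees with roots $1, n$ is $(n-2)_{|T_1|-1}(n-|T_1|-1)_{|T_2|-1} \sim n^{|T_1|+|T_2|-2}$, and the probability that all tree-edges are present in $G(n, \lambda/n)$ equals $(\lambda/n)^{|T_1|+|T_2|-2}$, giving a combined factor $\lambda^{|T_1|+|T_2|-2}$. Writing $X_i^{(1)}$ and $X_i^{(2)}$ for the sums of edge weights along $q_i^{(1)}$ and $q_i^{(2)}$, the constraint $L(p_i) \leq \varrho_n + u_i$ becomes, after conditioning on the tree weights, a bound on the middle-weight sum $S_{h_i}$. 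The number of labellings of the middle segment $m_i$ of hopcount $h_i$ is $\sim n^{h_i-1}$ (up to a $1+o(1)$ factor from enforcing mutual disjointness, since the total number of relevant vertices is $O(\log n)$) with edge-presence probability $(\lambda/n)^{h_i}$; summing over $h_i$ subject to $h_i \leq k_n(z_i) - O(1)$ (as the trees have bounded size) yields
\[
\frac{1}{n}\sum_{h_i=1}^{k_n(z_i) - O(1)} \lambda^{h_i}\, \mathbb{P}\!\left(S_{h_i} \leq \varrho_n + u_i - X_i^{(1)} - X_i^{(2)} \;\big|\; \text{tree weights}\right).
\]
Multiplying by $\tau_n$ and letting $n \to \infty$, Lemma \ref{lem:renewal} b) (together with the observation that $O(1)$ shifts of the cut-off $k_n$ are immaterial in the limit) identifies the limit as $\Lambda((-\infty, z_i] \times (-\infty, u_i - X_i^{(1)} - X_i^{(2)}])$.

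In both the non-arithmetic case and the arithmetic case (in the latter, since tree weights lie in $M\mathbb{Z}$ almost surely) the measure $\Lambda$ enjoys the shift-factorisation $\Lambda((-\infty, z] \times (-\infty, u - y]) = e^{-\alpha y} \Lambda((-\infty, z] \times (-\infty, u])$ for $y \geq 0$, whence
\[
\prod_{i=1}^r \Lambda\!\left((-\infty, z_i] \times (-\infty, u_i - X_i^{(1)} - X_i^{(2)}]\right) = e^{-\alpha \sum_{i=1}^r (X_i^{(1)} + X_i^{(2)})} \prod_{i=1}^r \Lambda\!\left((-\infty, z_i] \times (-\infty, u_i]\right).
\]
The identity $\sum_{i=1}^r X_i^{(j)} = \sum_{I \subseteq [r],\, |I| \geq 2} |I|\, S_I(T_j)$ (direct from the construction of $S_I$ in Section \ref{sec:trees}) together with independence of tree-edge weights factorises the tilting expectation as $\mathbb{E}[e^{-\alpha \sum |I| S_I(T_1)}] \cdot \mathbb{E}[e^{-\alpha \sum |I| S_I(T_2)}]$. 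Hence the contribution of $(T_1, T_2)$ to $\tau_n^r\, \mathbb{E}[\,\cdot\,]$ converges to
\[
\lambda^{|T_1|-1}\, \mathbb{E}\!\left[e^{-\alpha \sum |I| S_I(T_1)}\right]\, \lambda^{|T_2|-1}\, \mathbb{E}\!\left[e^{-\alpha \sum |I| S_I(T_2)}\right] \prod_{i=1}^r \Lambda\!\left((-\infty, z_i] \times (-\infty, u_i]\right),
\]
and summation over $T_1, T_2 \in \mathcal{T}_r$ yields precisely $(M^{(r)})^2 \prod_{i=1}^r \Lambda((-\infty, z_i] \times (-\infty, u_i])$.

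The main technical obstacle is justifying the interchange of the limit $n \to \infty$ with the infinite sum over pairs $(T_1, T_2)$. This is addressed by a dominated convergence argument using the uniform-in-$n$ bound $\frac{\tau_n}{n}\sum_\ell \lambda^\ell \mathbb{P}(S_\ell \leq x) \leq C e^{\alpha x}$ from Lemma \ref{lem:renewal} a), which provides an envelope of the form $C^r e^{-\alpha \sum_i (X_i^{(1)} + X_i^{(2)})}$ for the middle contributions; combined with the factors $\lambda^{|T_j|-1}$ and with the mark-size bound $|I| \geq 2$, this produces exactly the summable envelope that underpins the convergence of $M^{(r)}$ in Lemma \ref{lem:recursion_Mr}. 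Secondary concerns include verifying that the pairwise-disjointness of middle segments (and their disjointness from the trees) incurs only a $1 + o(1)$ factor, which holds since the total number of involved vertices grows only like $\log n$, and that degenerate configurations such as a $T_1$-leaf coinciding with a $T_2$-leaf contribute only $O(1/n)$ and may be absorbed into the error.
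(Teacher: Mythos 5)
Your proposal follows essentially the same strategy as the paper's own proof: decompose the $r$ non-crossing paths into two marked trees $T_1, T_2 \in \mathcal{T}_r$ of shared edges near vertices $1$ and $n$ plus pairwise-disjoint middle segments, apply Lemma~\ref{lem:renewal}~b) to the middle sums conditionally on the tree weights, use the shift-factorisation $\Lambda((-\infty,z]\times(-\infty,u-y]) = e^{-\alpha y}\Lambda((-\infty,z]\times(-\infty,u])$ (valid in the arithmetic case because tree weights lie in $M\mathbb{Z}$), and control the exchange of $n\to\infty$ with the sum over $(T_1,T_2)$ by dominated convergence via the uniform envelope from Lemma~\ref{lem:renewal}~a), summing finally to $(M^{(r)})^2\prod_i\Lambda(\cdot)$. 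The argument, including the identification $\sum_i X_i^{(j)} = \sum_{|I|\geq 2}|I|S_I(T_j)$ and the observation that the $O(1)$ shift in the hopcount cutoff is immaterial, matches the paper's proof in structure and substance.
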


\begin{proof}
For $r=1$ the statement follows from Lemma \ref{lem:first_moment}, whence we assume $r\geq 2$ for the rest of the proof. We have that
\begin{align*}
\mathscr{M}_n & := \tau_n^r \mathbb{E} \sum_{(p_1,\hdots,p_r)\in(\mathcal{P}_1^n)^r_{\neq}} \mathbf{1} \{p_1,\hdots,p_r \text{ do not cross and } H(p_i)\leq k_n(z_i)\text{ for } i\in\{1,\hdots,r\}\} \\
& \hspace{3cm} \times \mathbf{1}\left\{ L(p_i) \leq \varrho_n + u_i \text{ for } i\in\{1,\hdots,r\} \right\} \\
& = \tau_n^r \sum_{(p_1,\hdots,p_r)\in(\mathscr{P}_1^n)^r_{\neq}} \mathbf{1} \{p_1,\hdots,p_r \text{ do not cross and } H(p_i)\leq k_n(z_i)\text{ for } i\in\{1,\hdots,r\}\} \\
& \hspace{3cm} \times \mathbb{P}(p_1,\hdots,p_r\in\mathcal{P}_1^n) \mathbb{P}\left( L(p_i) \leq \varrho_n + u_i \text{ for } i\in\{1,\hdots,r\} \right).
\end{align*}
Next we use that $p_1,\hdots,p_r$ are distinct paths from $1$ to $n$ that do not cross each other. This means that any two of them run together at the beginning, then become disjoint and finally are again together. Note that every path $p_1,\hdots,p_r$ must have an edge that is not part of one of the other paths since otherwise two of the other paths cross each other. Thus, $p_1,\hdots,p_r$ first split, then they are all disjoint and finally they again rejoin (see Figure \ref{fig:crossings} for an example). The splitting and rejoining can be described by trees $T',T''\in\mathcal{T}_r$ as discussed in the previous subsection. More precisely, the subgraph that is induced by the shared edges and contains the vertex $1$ is isomorphic to $T'$, while the subgraph that is induced by the shared edges and contains the vertex $n$ is isomorphic to $T''$ (see Figure \ref{fig:trees} for the example of Figure \ref{fig:crossings}). The vertices corresponding to the vertices of $T'$ and $T''$ are denoted by $\mathbf{v}'=(v_1',\hdots,v_{|T'|}')$ and $\mathbf{v}''=(v_1'',\hdots,v_{|T''|}'')$, where $v_1'=1$ and $v_1''=n$ correspond to the roots of $T'$ and $T''$.

If a $v\in[n]$ corresponds to a vertex in $T'$ or $T''$ with marks $I\subseteq [r]$, this means that $v$ belongs to exactly the paths $p_i$ with $i\in I$. The $i$-th path becomes disjoint from all other paths in the vertices corresponding to the vertices of $T'$ and $T''$ that have $i$ as an element of their mark and maximise the distance to the root. For $i\in[r]$ let $\ell_i$ be the number of edges of $p_i$ that are not shared with other paths and let $\mathbf{v}^{(i)}=(v_1^{(i)},\hdots,v_{\ell_i-1}^{(i)})$ be the corresponding vertices, while $\ell_i'$ and $\ell_i''$ are the numbers of shared edges at the beginning and the end. By $v_0^{(i)}$ and $v_{\ell_i}^{(i)}$ we denote the vertices where $p_i$ starts and ends being disjoint from the other paths. The sums of the weights of shared edges of $p_i$ are $S_i'$ and $S_i''$ and the sum of the weights of the not shared edges is $S_i$. We use the abbreviations $\ell_0=|T'|$ and $\ell_{r+1}=|T''|$, where $|T'|$ and $|T''|$ are the numbers of vertices of $T'$ and $T''$.

Using the notation introduced above we see that
\begin{align*}
\mathscr{M}_n & = \tau_n^r \sum_{T',T''\in\mathcal{T}_r} \sum_{\ell_1=1}^{k_n(z_1)-\ell_1'-\ell_1''}\hdots\sum_{\ell_r=1}^{k_n(z_r)-\ell_r'-\ell_r''} \sum_{\substack{(\mathbf{v}',\mathbf{v}'',\mathbf{v}^{(1)},\hdots,\mathbf{v}^{(r)})\in [n]_{\neq}^{\ell_0+\hdots+\ell_{r+1}-r}\\ v_1'=1, v_1''=n}} \\
& \hspace{2cm} \times \mathbb{E} \mathbf{1}\{ v'_{i}\leftrightarrow v'_{j} \text{ for all } i,j\in\{1,\hdots,|T'|\} \text{ with $i\leftrightarrow j$ in $T'$}\} \\
& \hspace{2.75cm} \times \mathbf{1}\{ v''_{i}\leftrightarrow v''_{j} \text{ for all } i,j\in\{1,\hdots,|T''|\} \text{ with $i\leftrightarrow j$ in $T''$}\} \\
& \hspace{2.75cm} \times \mathbf{1}\{ v^{(i)}_0 \leftrightarrow v^{(i)}_1 \leftrightarrow \hdots\leftrightarrow v^{(i)}_{\ell_i-1} \leftrightarrow v^{(i)}_{\ell_i} \text{ for } i\in[r] \} \\
& \hspace{2.75cm} \times \mathbf{1}\left\{ S_i'+S_i''+S_i\leq \varrho_n + u_i \text{ for } i\in[r] \right\}. 
\end{align*}
Obviously the first three indicators under the expectation are independent from the fourth. The probability that all required edges are present in the Erd\H{o}s--R\'{e}nyi graph $G(n,\lambda/n)$ is given by $(\lambda/n)^{|T|'+|T''|-2+\sum_{j=1}^r \ell_j}$ since the exponent is the total number of edges occurring in the paths. Note that the probability of the event in the last indicator does not depend on the particular edges but only on $T',T''$ and $\ell_1,\hdots,\ell_r$. This yields
\begin{align*}
\mathscr{M}_n & = \tau_n^r \sum_{T',T''\in\mathcal{T}_r} \sum_{\ell_1=1}^{k_n(z_1)-\ell_1'-\ell_1''}\hdots\sum_{\ell_r=1}^{k_n(z_r)-\ell_r'-\ell_r''} (n-2)_{\ell_0+\hdots+\ell_{r+1}-2-r} \bigg( \frac{\lambda}{n} \bigg)^{\ell_0+\hdots+\ell_{r+1}-2} \\
& \hspace{4cm} \times  \mathbb{P}\left( S_i'+S_i''+S_i\leq \varrho_n + u_i \text{ for } i\in[r] \right)\\
& =  \sum_{T',T''\in\mathcal{T}_r} \sum_{\ell_1=1}^{k_n(z_1)-\ell_1'-\ell_1''}\hdots\sum_{\ell_r=1}^{k_n(z_r)-\ell_r'-\ell_r''} \frac{(n-2)_{\ell_0+\hdots+\ell_{r+1}-2-r}}{n^{\ell_0+\hdots+\ell_{r+1}-2-r}} \lambda^{|T'|+|T''|-2} \\
& \hspace{2.5cm} \times\mathbb{E} \prod_{i=1}^r \frac{\tau_n}{n} \lambda^{\ell_i} \mathbb{P}\left( S_i\leq \varrho_n + u_i - S_i' -S_i''| S_i',S_i'' \right).
\end{align*}
As the ratio of the descending factorial and the power of $n$ is at most one, for fixed $T',T''\in\mathcal{T}_r$ the inner sums are bounded by 
\begin{align*}
\lambda^{|T'|+|T''|-2} \mathbb{E} \prod_{i=1}^r \frac{\tau_n}{n} \sum_{\ell_i=1}^{\infty} \lambda^{\ell_i} \mathbb{P}\left( S_i\leq \varrho_n + u_i - S_i' -S_i''| S_i',S_i'' \right).
\end{align*}
From Lemma \ref{lem:renewal} a) we know that each of the factors in the product is at most
$$
\frac{\tau_n e^{\alpha\varrho_n}}{n} C e^{\alpha (u_i - S_i' - S_i'')} = C e^{\alpha (u_i - S_i' - S_i'')},
$$
for some constant $C$. Hence the summand for $T',T''\in \mathcal{T}_r$ is bounded by
\begin{align*}
& C^r e^{\alpha \sum_{i=1}^r u_i} \lambda^{|T'|+|T''|-2} \mathbb{E}\left[e^{-\alpha \sum_{i=1}^r S_i'+S_i''}\right] \\
& = C^r e^{\alpha \sum_{i=1}^r u_i} \lambda^{|T'|+|T''|-2} \mathbb{E}\left[ e^{-\alpha \sum_{I\subseteq[r], |I|\geq 2} |I| S_{T',I}}\right] \mathbb{E}\left[ e^{-\alpha \sum_{I\subseteq[r], |I|\geq 2} |I| S_{T'',I}}\right],
\end{align*}
where, for $T\in \{T',T''\}$ and $I\subseteq [r]$, $S_{T,I}$ is the sum of the weights of edges of $T$ whose vertex with the larger distance to the root has mark $I$. Thus, by \eqref{eq:definition_Mr} in Lemma \ref{lem:recursion_Mr}, the series over all $T',T''$ converges, whence we can apply the dominated convergence theorem to $\mathscr{M}_n$. For $T',T''\in\mathcal{T}_r$ we obtain
\begin{align*}
& \lim_{n\to\infty} \sum_{\ell_1=1}^{k_n(z_1)-\ell_1'-\ell_1''}\hdots\sum_{\ell_r=1}^{k_n(z_r)-\ell_r'-\ell_r''} \frac{(n-2)_{\ell_0+\hdots+\ell_{r+1}-2-r}}{n^{\ell_0+\hdots+\ell_{r+1}-2-r}} \lambda^{|T'|+|T''|-2} \\
& \hspace{4.5cm} \times\mathbb{E} \prod_{i=1}^r \frac{\tau_n}{n} \lambda^{\ell_i} \mathbb{P}\left( S_i\leq \varrho_n + u_i - S_i' -S_i''| S_i',S_i'' \right) \\
& = \lim_{n\to\infty} \sum_{\ell_1=1}^{k_n(z_1)-\ell_1'-\ell_1''}\hdots\sum_{\ell_r=1}^{k_n(z_r)-\ell_r'-\ell_r''} \mathbb{E} \prod_{i=1}^r \frac{\tau_n}{n} \lambda^{\ell_i} \mathbb{P}\left( S_i\leq \varrho_n + u_i - S_i' -S_i''| S_i',S_i'' \right),
\end{align*}
where we used that, for all $\ell_0,\hdots,\ell_{r+1}$ in the sums and $n$ sufficiently large,
\begin{align*}
1 & \geq \frac{(n-2)_{\ell_0+\hdots+\ell_{r+1}-2-r}}{n^{\ell_0+\hdots+\ell_{r+1}-2-r}} \\
& \geq \bigg( 1 - \frac{k_n(z_1)+\hdots+k_n(z_r)+\ell_0+\ell_{r+1}}{n}  \bigg)^{k_n(z_1)+\hdots+k_n(z_r)+\ell_0+\ell_{r+1}}
\end{align*}
and the right-hand side converges to one as $n\to\infty$. From Lemma \ref{lem:renewal} b) it follows that
\begin{align*}
\lim_{n\to\infty} & \frac{\tau_n}{n} \sum_{\ell_i=1}^{k_n(z_i)-\ell_i'-\ell_i''} \lambda^{\ell_i} \mathbb{P}\left( S_i\leq \varrho_n + u_i - S_i' -S_i''| S_i',S_i'' \right)\\
 & = \Lambda((-\infty,z_i]\times (-\infty,u_i-S_i'-S_i'']) \\
& = e^{-\alpha(S_i'+S_i'')} \Lambda((-\infty,z_i]\times (-\infty,u_i])
\end{align*}
for $i\in\{1,\hdots,r\}$. For the non-arithmetic case the latter identity is obvious, while in the arithmetic case one uses that $S_i'+S_i''$ belongs to $M\mathbb{Z}$. Thus, we obtain by the dominated convergence theorem and the definition of $M^{(r)}$ in \eqref{eq:definition_Mr} that
\begin{align*}
\lim_{n\to\infty} \mathscr{M}_n & = \sum_{T',T''\in\mathcal{T}_r} \lambda^{|T'|+|T''|-2} \mathbb{E} \prod_{i=1}^r e^{-\alpha(S_i'+S_i'')} \Lambda((-\infty,z_i]\times(-\infty,u_i]) \\
& = \sum_{T',T''\in\mathcal{T}_r} \lambda^{|T'|+|T''|-2} \mathbb{E} \Big[ e^{-\alpha \sum_{I\subseteq [r], |I|\geq 2} S_{T',I}+S_{T'',I}} \Big] \prod_{i=1}^r \Lambda((-\infty,z_i]\times(-\infty,u_i]) \\
& = \bigg( \sum_{T\in\mathcal{T}_r} \lambda^{|T|-1} \mathbb{E}\Big[ e^{-\alpha \sum_{I\subseteq [r], |I|\geq 2} S_{T,I}}\Big] \bigg)^2 \prod_{i=1}^r \Lambda((-\infty,z_i]\times(-\infty,u_i]) \\
& = \big( M^{(r)} \big)^2 \prod_{i=1}^r \Lambda((-\infty,z_i]\times (-\infty,u_i]) ,
\end{align*}
which completes the proof.
\end{proof}

We say that a path $\tilde{p}$ crosses paths $p_1,\hdots,p_r$ if $\tilde{p}$ crosses at least one of $p_1,\hdots,p_r$. The expression considered in the previous lemma is not a factorial moment of $\Psi_n^{(a,w)}$ since we do not require that $p_1,\hdots,p_r$ are all $(a,w)$-uncrossed. To take care of that issue, we control the situation that at least one of them is crossed by the following lemma.

\begin{lemma} \label{lem:contribution_crossed}
For $r\in\mathbb{N}$ and $z,u\in\mathbb{R}$,
\begin{align*}
& \lim_{n\to\infty} \mathbb{E} \sum_{(p_1,\hdots,p_r,\tilde{p})\in(\mathcal{P}_1^n)^{r+1}_{\neq}} \mathbf{1}\{p_1,\hdots,p_r\text{ do not cross},\, \tilde{p}\text{ crosses } p_1,\hdots,p_r\}\\
& \hspace{3.75cm} \times \mathbf{1}\{H(p_1),\hdots,H(p_r),H(\tilde{p})\leq k_n(z)\}\\
& \hspace{3.75cm} \times \mathbf{1}\left\{L(p_1),\hdots,L(p_r),L(\tilde{p})\leq \varrho_n +u\right\} =0,
\end{align*}
where $\varrho_n$ and $k_n$ are as defined in \eqref{eq:def_rho} and \eqref{eq:def_of_k_n(z)}, respectively.
\end{lemma}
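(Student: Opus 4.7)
The approach will extend the pair-counting argument of Lemma \ref{lem:good_paths} to handle $r+1$ paths simultaneously. Since ``$\tilde p$ crosses at least one of $p_1,\hdots,p_r$'' means that there exists some index $i$ with $\tilde p$ crossing $p_i$, I would first apply a union bound over $i$ to reduce the claim to the case where $\tilde p$ crosses a fixed path, say $p_1$. Dropping the non-crossing constraint among $p_2,\hdots,p_r$ then only enlarges the sum, so it suffices to show
\begin{multline*}
\mathbb{E}\sum_{(p_1,\tilde p,p_2,\hdots,p_r)\in(\mathscr P_1^n)^{r+1}_{\neq}}\mathbf{1}\{p_1,\tilde p,p_2,\hdots,p_r\in\mathcal P_1^n,\\ H(p_j),H(\tilde p)\le k_n(z),\,L(p_j),L(\tilde p)\le\varrho_n+u\text{ for all }j,\,\tilde p\text{ crosses }p_1\}\to 0.
\end{multline*}

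I would then expand this as a sum over deterministic tuples of probabilities and exploit the independence of edges and weights. The probability that the given sequences are all present in $G(n,\lambda/n)$ is $(\lambda/n)^{|E|}$, where $|E|$ is the number of distinct edges in their union. For the weight bounds, letting $W_j$ denote the sum of weights of those edges of $p_j$ that are not contained in $p_1\cup\tilde p\cup p_2\cup\hdots\cup p_{j-1}$ for $j\ge 2$, non-negativity of weights yields $\{L(p_j)\le\varrho_n+u\}\subseteq\{W_j\le\varrho_n+u\}$. The variables $L(p_1),L(\tilde p),W_2,\hdots,W_r$ then involve disjoint edge sets and are hence independent, so their joint weight-bound probability factors. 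The sum over $(p_1,\tilde p)$ with $\tilde p$ crossing $p_1$ would be bounded exactly as in the proof of Lemma \ref{lem:good_paths}, producing a vanishing factor of order $k_n(z)^{C}/n$ coming from the requirement that $\tilde p$ leaves and rejoins $p_1$ at least $q\ge 2$ times at vertices different from $1$ and $n$.

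For each additional path $p_j$ with $j\ge 2$, I would control $\sum_{p_j}(\lambda/n)^{\ell_j^+}\mathbb{P}(S_{\ell_j^+}\le\varrho_n+u)$ uniformly in the earlier configuration, decomposing $p_j$ by its hopcount $\ell_j$ and the number $\bar\ell_j$ of its edges shared with previously selected paths. The number of placements of $p_j$ is bounded by $(r\,k_n(z))^{C\bar\ell_j}\cdot n^{\ell_j^+-1}$ for a fixed constant $C$, and Lemma \ref{lem:renewal} a) applied to the sum over $\ell_j^+$ produces a factor of order $e^{\alpha(\varrho_n+u)}/n=O(1)$, since $\tau_n=ne^{-\alpha\varrho_n}$ is bounded. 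The summability in $\bar\ell_j$ follows from $\lambda\mathbb{E}[e^{-2\alpha X}]<1$, so each additional $p_j$ contributes only a bounded factor. Multiplying the $r-1$ bounded contributions by the vanishing factor from $(p_1,\tilde p)$ delivers the claim. The main technical obstacle will be to verify that the combinatorial cost of the shared edges between the additional paths and the previously chosen ones is absorbed by the geometric summability coming from \eqref{eqn:probability_weight_zero}; this is delicate because $\bar\ell_j$ can be as large as $k_n(z)$, and the union of previously selected edges grows with $r$, so the repeated attachment counts must be dominated carefully by the product of the ``renewal factors'' coming from Lemma \ref{lem:renewal} a).
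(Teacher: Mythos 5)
Your approach takes a genuinely different route from the paper, and it contains a gap in the combinatorial counting of the additional paths that, as written, makes the argument fail.

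First the difference in strategy. The paper keeps the non-crossing constraint on $p_1,\hdots,p_r$, so that the sum over them is bounded via Lemma \ref{lem:not_crossing}, and then analyses the maximal segments of $\tilde p$ lying in the \emph{union} $p_1\cup\hdots\cup p_r$. This forces a careful case split between $q\geq 2$ segments (which gives the $n^{-(q-1)}$ decay directly) and the delicate $q=1$ case, where the decay instead comes from the observation that $\tilde p$ must traverse the private part of some $p_j$, giving the factor $e^{-\alpha S_j}$ that is absorbed by an extra sum over $p_r$. You instead reduce by a union bound to a fixed crossed path $p_1$, drop the non-crossing constraint on $p_2,\hdots,p_r$, and factor the remaining paths independently. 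This sidesteps the $q=1$ case entirely (for a single pair $p_1,\tilde p$ of distinct paths from $1$ to $n$ one always has $q\geq 2$, as in Lemma \ref{lem:good_paths}), which is a genuine simplification. In exchange you no longer have Lemma \ref{lem:not_crossing} available for the bulk $(p_1,\hdots,p_r)$ sum and must control each extra path by hand.

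The gap is in that control. You write that the number of placements of $p_j$ is bounded by $(r\,k_n(z))^{C\bar\ell_j}\cdot n^{\ell_j^+-1}$, where $\bar\ell_j$ is the number of edges of $p_j$ shared with previously placed paths, and you claim the sum over $\bar\ell_j$ converges because $\lambda\,\mathbb{E}[e^{-2\alpha X}]<1$. This cannot work. The factor $(r\,k_n(z))^{C\bar\ell_j}$ is exponential in $\bar\ell_j$, and $\bar\ell_j$ may be as large as $k_n(z)\asymp\log n$, so the factor can be super-polynomial in $n$. At the same time, you have thrown away the shared-edge weights via $\{L(p_j)\leq\cdot\}\subseteq\{W_j\leq\cdot\}$, so the probability term carries no decay in $\bar\ell_j$ whatsoever; the geometric factor $(\lambda\mathbb{E}[e^{-2\alpha X}])^{\bar\ell_j}$ only arises when a shared edge's weight is counted in \emph{two} constraints simultaneously (as in Lemma \ref{lem:good_paths}), which your bounding step explicitly destroys. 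Hence the ``summability in $\bar\ell_j$'' does not follow, and the bound diverges.

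The correct way to count is by the number $q_j$ of maximal \emph{new segments} of $p_j$, not by the number of shared edges. Given the previously placed paths, the cost of choosing the $2q_j$ attachment vertices and the shared sub-paths connecting them is of order $(r\,k_n(z))^{2q_j}c_r^{q_j+1}$ (a constant $c_r$ bounding the number of routes inside the fixed-size union of the earlier paths), and the cost of the new vertices is $n^{\ell_j^+-q_j}$. Summing over $\ell_j^+$ with $(\lambda/n)^{\ell_j^+}\mathbb{P}(S_{\ell_j^+}\leq\varrho_n+u)$ and using Lemma \ref{lem:renewal}~a) gives a contribution of order $\sum_{q_j\geq 1}(r\,k_n(z))^{2q_j}c_r^{q_j+1}n^{-q_j}\cdot Ce^{\alpha(\varrho_n+u)}$. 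The dominant term is $q_j=1$, which yields $O((\log n)^{C})$ rather than $O(1)$. Multiplying $r-1$ such factors by the $O((\log n)^{C'}/n)$ bound from the crossing pair $(p_1,\tilde p)$ still gives $O((\log n)^{(r-1)C+C'}/n)\to 0$, so the conclusion survives once the counting is repaired, but neither the stated combinatorial bound nor the claimed $O(1)$ per-path contribution is correct as written. A further small slip: $L(p_1)$ and $L(\tilde p)$ are not independent (the crossing pair shares edges by construction); what is true, and what you need, is that the pair $(L(p_1),L(\tilde p))$ is jointly independent of $W_2,\hdots,W_r$.
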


\begin{proof}
For $r=1$ the statement was already shown in the proof of Lemma \ref{lem:good_paths}, whence we can assume $r\geq 2$ in the sequel. We have
\begin{align}
& \mathbb{E} \sum_{(p_1,\hdots,p_r,\tilde{p})\in(\mathcal{P}_1^n)^{r+1}_{\neq}} \mathbf{1}\{p_1,\hdots,p_r\text{ do not cross},\, \tilde{p}\text{ crosses } p_1,\hdots,p_r\} \notag\\
& \hspace{2.25cm} \times \mathbf{1}\left\{H(p_1),\hdots,H(p_r),H(\tilde{p})\leq k_n(z), L(p_1),\hdots,L(p_r),L(\tilde{p})\leq \varrho_n +u\right\} \notag\\
& = \sum_{(p_1,\hdots,p_r)\in(\mathscr{P}_1^n)^{r}_{\neq}} \mathbf{1}\{p_1,\hdots,p_r\text{ do not cross},\, H(p_1),\hdots,H(p_r)\leq k_n(z) \} \notag\\
& \hspace{1.25cm} \times \sum_{\tilde{p}\in\mathscr{P}_1^n\setminus\{p_1,\hdots,p_{r}\}} \mathbf{1}\left\{\tilde{p}\text{ crosses } p_1,\hdots,p_r, H(\tilde{p})\leq k_n(z) \right\} \mathbb{P}(p_1,\hdots,p_r,\tilde{p}\in\mathcal{P}_1^n) \notag \\
& \hspace{5cm} \times \mathbb{P}\left(L(p_1),\hdots,L(p_r),L(\tilde{p})\leq \varrho_n + u\right). \label{eqn:inner_sum}
\end{align}
In the following we bound the inner sum for given $p_1,\hdots,p_r$. We denote by $\tilde{\ell}$ the number of edges of $\tilde{p}$. Let $s_0,\hdots,s_q$ for $q\geq 1$ be the maximal segments of $\tilde{p}$ consisting of vertices and edges that also occur in $p_1,\hdots,p_r$. We write $\overline{\ell}$ for the sum of the numbers of edges of $s_0,\hdots,s_q$ and $\overline{S}_{\overline{\ell}}$ for the sum of their weights. In the following let $p_1,\hdots,p_r$ and $s_0,\hdots,s_q$ be fixed. Let $\mathscr{P}_1^n(s_0,\hdots,s_q)$ be the set of all paths $\tilde{p}\in\mathscr{P}_1^n$ whose maximal segments consisting of vertices and edges from $p_1,\hdots,p_r$ are $s_0,\hdots,s_q$. For $\tilde{p}\in \mathscr{P}_1^n(s_0,\hdots,s_q)$ we observe that
$$
\mathbb{P}(\tilde{p}\in\mathcal{P}_1^n| p_1,\hdots,p_r\in\mathcal{P}_1^n) = \bigg( \frac{\lambda}{n}\bigg)^{\tilde{\ell}-\overline{\ell}}
$$
and
$$
\mathbb{P}\left( L(\tilde{p})\leq \varrho_n + u | \overline{S}_{\overline{\ell}}\right) = \mathbb{P}\left( T_{\tilde{\ell}-\overline{\ell}} \leq \varrho_n + u - \overline{S}_{\overline{\ell}} | \overline{S}_{\overline{\ell}} \right),
$$
where $T_{\tilde{\ell}-\overline{\ell}}$ is the sum of $\tilde{\ell}-\overline{\ell}$ independent copies of $X$. Moreover, the number of all paths $\tilde{p}\in \mathscr{P}_1^n(s_0,\hdots,s_q)$ with $\tilde{\ell}$ edges can be bounded by $n^{\tilde{\ell}-1-\overline{\ell}-(q-1)}= n^{\tilde{\ell}-\overline{\ell}-q}$. Combining the previous estimates and Lemma \ref{lem:renewal} a) we obtain
\begin{align*}
& \sum_{\tilde{p}\in\mathscr{P}_1^n(s_0,\hdots,s_{q})} \mathbf{1}\left\{\tilde{p}\text{ crosses } p_1,\hdots,p_r, H(\tilde{p})\leq k_n(z) \right\} \mathbb{P}(\tilde{p}\in\mathcal{P}_1^n | p_1,\hdots,p_r\in\mathcal{P}_1^n)\\
&\hspace{5cm}\times \mathbb{P}\left(L(\tilde{p})\leq \varrho_n + u | \overline{S}_{\overline{\ell}} \right) \notag \\
& \leq \sum_{\tilde{\ell}=\overline{\ell}+q}^{k_n(z)} n^{\tilde{\ell}-\overline{\ell}-q} \bigg( \frac{\lambda}{n}\bigg)^{\tilde{\ell}-\overline{\ell}} \mathbb{P}\left( T_{\tilde{\ell}-\overline{\ell}} \leq \varrho_n + u - \overline{S}_{\overline{\ell}} | \overline{S}_{\overline{\ell}} \right) \leq \frac{1}{n^q} \sum_{\ell=1}^\infty \lambda^{\ell} \mathbb{P}\left( T_{\ell} \leq \varrho_n + u - \overline{S}_{\overline{\ell}} | \overline{S}_{\overline{\ell}} \right)  \\
& \leq \frac{C}{n^q} e^{\alpha (\varrho_n+u - \overline{S}_{\overline{\ell}})} \leq \frac{C}{n^{q-1}} e^{\alpha u} e^{-\alpha \overline{S}_{\overline{\ell}}}=:M_q(p_1,\hdots,p_r).
\end{align*}
For $q\geq2$ we have the upper bound
$$
M_q(p_1,\hdots,p_r)\le\frac{C e^{\alpha u}}{n^{q-1}}.
$$
The only situation that $\tilde{p}$ crosses one of $p_1,\hdots,p_r$ and $q=1$ is if $\tilde{p}$ leaves and joins in the part where paths run together (see Figure \ref{fig:q=1} for an example). Thus, $\tilde{p}$ must follow one of $p_1,\hdots,p_r$ to come from the multiple edges in the neighbourhood of $1$ to that in the neighbourhood of $n$ as otherwise we would have $q\geq 2$. The latter implies that $\overline{S}_{\overline{\ell}}\geq S_j$ for some $j\in[r]$, where $S_j$ is the sum of the weights of edges only included in $p_j$ but not in any other of the paths $p_i$, $i\neq j$. This yields
$$
M_q(p_1,\hdots,p_r)\leq C e^{\alpha u} \sum_{j\in[r]} e^{-\alpha S_j}
$$
for $q=1$.

\begin{figure}
\begin{center}
\begin{tikzpicture}[scale=0.75]

\node[shape=circle,draw=black, minimum size=0.4cm] (Node1) at (0,0) {{\tiny $1$}};

\node[shape=circle,draw=black, minimum size=0.4cm] (Node11) at (1,1) {};

\node[shape=circle,draw=black, minimum size=0.4cm] (Node12) at (1,-1) {};

\node[shape=circle,draw=black, minimum size=0.4cm] (Node121) at (2,-1) {};

\node[shape=circle,draw=black, minimum size=0.4cm] (Node111) at (2,1) {};

\node[shape=circle,draw=black, minimum size=0.4cm] (Node1111) at (3,1.5) {};

\node[shape=circle,draw=black, minimum size=0.4cm] (Node1112) at (3,0.5) {};

\node[shape=circle,draw=black, minimum size=0.4cm] (Node11111) at (5,1.5) {};

\node[shape=circle,draw=black, minimum size=0.4cm] (Node11121) at (5,0.5) {};

\node[shape=circle,draw=black, minimum size=0.4cm] (Node1211) at (5,-1) {};

\node[shape=circle,draw=black, minimum size=0.4cm] (Noden) at (6,0) {{\tiny $n$}};

\path [-, color=blue, line width=0.05cm] (Node1.55) edge node[left] {} (Node11.215);

\path [-, color=blue, line width=0.05cm] (Node11.20) edge node[left] {} (Node111.160);

\path [-, color=blue, line width=0.05cm] (Node111) edge node[left] {} (Node1111);

\path [-, color=blue, line width=0.05cm, dashed] (Node1111) edge node[left] {} (Node11111);

\path [-, color=blue, line width=0.05cm] (Node11111) edge node[left] {} (Noden);

\path [-, color=green, line width=0.05cm] (Node1.35) edge node[left] {} (Node11.235);

\path [-, color=green, line width=0.05cm] (Node11.0) edge node[left] {} (Node111.180);

\path [-, color=green, line width=0.05cm] (Node111.0) edge node[left] {} (Node1112.160);

\path [-, color=green, line width=0.05cm, dashed] (Node1112.10) edge node[left] {} (Node11121.170);

\path [-, color=green, line width=0.05cm] (Node11121.0) edge node[left] {} (Noden.160);

\path [-, color=cyan, line width=0.05cm] (Node1.-35) edge node[left] {} (Node12.125);

\path [-, color=cyan, line width=0.05cm] (Node12) edge node[left] {} (Node121);

\path [-, color=cyan, line width=0.05cm, dashed] (Node121) edge node[left] {} (Node1211);

\path [-, color=cyan, line width=0.05cm] (Node1211) edge node[left] {} (Noden);

\path [-, color=red, line width=0.05cm] (Node1.-55) edge node[left] {} (Node12.145);

\path [-, color=red, line width=0.05cm] (Node11.-20) edge node[left] {} (Node111.200);

\path [-, color=red, line width=0.05cm] (Node12) edge node[left] {} (Node11);

\path [-, color=red, line width=0.05cm] (Node111.-20) edge node[left] {} (Node1112.180);

\path [-, color=red, line width=0.05cm, dashed] (Node1112.-10) edge node[left] {} (Node11121.190);

\path [-, color=red, line width=0.05cm] (Node11121.-20) edge node[left] {} (Noden.180);

\end{tikzpicture}

\caption{In this example for $q=1$ the highest path (in blue) is crossed by the lowest path leaving $1$ (in red).}

\label{fig:q=1}
\end{center}
\end{figure}

Next we derive an upper bound for the number of choices for $s_0,\hdots,s_q$. For each of the $2q$ vertices that are starting and end points of $s_0,\hdots,s_q$ and distinct from $1$ and $n$ we have less than $r k_n(z)$ choices. For two given vertices on $p_1,\hdots,p_r$ the number of paths between the two vertices only using edges of $p_1,\hdots,p_r$ is bounded by a constant $c_r$ only depending on $r$. In fact the number of vertices where $p_1,\hdots,p_r$ leave and join each other is bounded by a constant depending on $r$ and a path between the two given vertices is completely determined by these vertices that are passed and their order. In total this gives us at most 
$$
c_r^{q+1} r^{2q} k_n(z)^{2q}
$$ 
possibilities to choose $s_0,\hdots,s_q$. Together with the bounds for $M_q(p_1,\hdots,p_r)$ this implies that
\begin{align*}
& \sum_{\tilde{p}\in\mathscr{P}_1^n\setminus\{p_1,\hdots,p_{r}\}} \mathbf{1}\left\{\tilde{p}\text{ crosses } p_1,\hdots,p_r, H(\tilde{p})\leq k_n(z) \right\} \mathbb{P}(p_1,\hdots,p_r,\tilde{p}\in\mathcal{P}_1^n) \notag \\
& \hspace{5cm} \times \mathbb{P}\left(L(p_1),\hdots,L(p_r),L(\tilde{p})\leq \varrho_n + u\right) \\
& \leq \mathbb{P}(p_1,\hdots,p_r\in\mathcal{P}_1^n)\hspace{-2.5pt} \sum_{q=1}^{k_n(z)} c_r^{q+1} r^{2q} k_n(z)^{2q} \mathbb{E}\left[M_q(p_1,\hdots,p_r) \mathbf{1}\left\{L(p_1),\hdots,L(p_r)\leq \varrho_n + u\right\} \right] \\
& \leq C e^{\alpha u} c_r^2 r^{2} k_n(z)^{2}  \mathbb{P}(p_1,\hdots,p_r\in\mathcal{P}_1^n) \mathbb{E}\left[ \sum_{j=1}^r e^{-\alpha S_j} \mathbf{1}\{L(p_1),\hdots,L(p_r)\leq \varrho_n + u\} \right] \\
& \quad  + C e^{\alpha u} \mathbb{P}(p_1,\hdots,p_r\in\mathcal{P}_1^n) \mathbb{P}\left(L(p_1),\hdots,L(p_r)\leq \varrho_n + u\right) \sum_{q=2}^{k_n(z)} \frac{c_r^{q+1} r^{2q} k_n(z)^{2q}}{n^{q-1}}.
\end{align*}
Now we put this bound into \eqref{eqn:inner_sum}. First we consider
\begin{align*}
J_1 := & C e^{\alpha u} c_r^2 r^{2} k_n(z)^{2}\hspace{-4.51pt} \sum_{(p_1,\hdots,p_r)\in(\mathscr{P}_1^n)^{r}_{\neq}} \mathbf{1}\{p_1,\hdots,p_r\text{ do not cross},\, H(p_1),\hdots,H(p_r)\leq k_n(z) \} \\
& \hspace{2.75cm} \times \mathbb{P}(p_1,\hdots,p_r\in\mathcal{P}_1^n) \mathbb{E}\left[ \sum_{j=1}^r e^{-\alpha S_j} \mathbf{1}\{L(p_1),\hdots,L(p_r)\leq \varrho_n + u\} \right] \\
= & C e^{\alpha u} c_r^2 r^{3} k_n(z)^{2}\hspace{-4.51pt}\sum_{(p_1,\hdots,p_r)\in(\mathscr{P}_1^n)^{r}_{\neq}} \mathbf{1}\{p_1,\hdots,p_r\text{ do not cross},\, H(p_1),\hdots,H(p_r)\leq k_n(z) \} \\
& \hspace{3cm} \times \mathbb{P}(p_1,\hdots,p_r\in\mathcal{P}_1^n) \mathbb{E}\left[e^{-\alpha S_r} \mathbf{1}\{L(p_1),\hdots,L(p_r)\leq \varrho_n + u\} \right]. 
\end{align*}
We can obtain the path $p_r$ by choosing the two vertices belonging to $p_1,\hdots,p_{r-1}$, where it starts and ends to be disjoint from these paths and connecting them via $i$ edges. Letting $T_i$ be the sum of $i$ independent copies of $X$ we obtain this way that $J_1$ is at most
\begin{align*}
& C e^{\alpha u} c_r^2 r^{3} k_n(z)^{2}\hspace{-8.4pt} \sum_{(p_1,\hdots,p_{r-1})\in(\mathscr{P}_1^n)^{r-1}_{\neq}} \hspace{-8.4pt}\mathbf{1}\{p_1,\hdots,p_{r-1}\text{ do not cross},\, H(p_1),\hdots,H(p_{r-1})\leq k_n(z) \} \\
& \hspace{4cm} \times \mathbb{P}(p_1,\hdots,p_{r-1}\in\mathcal{P}_1^n) \mathbb{P}(L(p_1),\hdots,L(p_{r-1})\leq \varrho_n + u) \\
& \hspace{4cm} \times (2+(r-1)k_n(z))^2 \sum_{i=1}^{k_n(z)} \frac{\lambda^i}{n} \mathbb{E}\left[\mathbf{1}\left\{T_i\leq \varrho_n+u\right\} e^{-\alpha T_i} \right].
\end{align*}
Let $\tilde{T}_i$ be a sum of $i$ i.i.d.\ random variables with distribution as in \eqref{eqn:F_tilde}. Since
$$
\lambda^i \mathbb{E}\left[\mathbf{1}\left\{T_i\leq \varrho_n+u\right\} e^{-\alpha T_i} \right] = \mathbb{P}(\tilde{T}_i\leq \varrho_n + u )\leq 1,
$$
we have
\begin{align*}
J_1 & \leq \frac{C e^{\alpha u} c_r^2 r^3 (r+1)^2 k_n(z)^5 }{n} \sum_{(p_1,\hdots,p_{r-1})\in(\mathscr{P}_1^n)^{r-1}_{\neq}} \mathbf{1}\{p_1,\hdots,p_{r-1}\text{ do not cross}\} \\
& \hspace{4.75cm} \times  \mathbf{1}\{H(p_1),\hdots,H(p_{r-1})\leq k_n(z) \} \\
& \hspace{4.75cm} \times  \mathbb{P}(p_1,\hdots,p_{r-1}\in\mathcal{P}_1^n) \mathbb{P}(L(p_1),\hdots,L(p_{r-1})\leq \varrho_n + u).
\end{align*}
For $n\to\infty$ the first factor tends to zero, while the sum is bounded due to Lemma \ref{lem:not_crossing}, whence $J_1\to0$ as $n\to\infty$. Moreover, the term
\begin{align*}
J_2 := & C e^{\alpha u} \sum_{(p_1,\hdots,p_r)\in(\mathscr{P}_1^n)^{r}_{\neq}} \mathbf{1}\{p_1,\hdots,p_r\text{ do not cross},\, H(p_1),\hdots,H(p_r)\leq k_n(z) \} \\
& \hspace{2cm} \times \mathbb{P}(p_1,\hdots,p_r\in\mathcal{P}_1^n) \mathbb{P}(L(p_1),\hdots,L(p_{r})\leq \varrho_n + u) \sum_{q=2}^{k_n(z)} \frac{c_r^{q+1} r^{2q} k_n(z)^{2q}}{n^{q-1}}
\end{align*}
also vanishes as $n\to\infty$ since the second factor converges to zero and the remaining sum is again bounded. Now the convergence of $J_1$ and $J_2$ to zero proves the statement of the lemma.
\end{proof}

Let $\mathcal{U}$ be the set of all finite unions of the form $\bigcup_{i=1}^m (a^{(1)}_i,b_i^{(1)}]\times (a_i^{(2)},b_i^{(2)}]$ with $m\in\mathbb{N}$ and $a^{(1)}_i,a^{(2)}_i,b^{(1)}_i,b_i^{(2)}\in\mathbb{R}$ for $i\in\{1,\hdots,m\}$. Next we combine the previous two lemmas to compute the limits of the factorial moments of $\Psi_n^{(a,w)}$, as defined in \eqref{eq:def_Psi_n_a,w}.

\begin{lemma} \label{lem:factorial_moments_prelimit_via_Mr}
Let $a,w\in\mathbb{R}$. For any $A \in \mathcal{U}$ such that $A\subseteq (-\infty,a]\times(-\infty,w]$ and $r\in\mathbb{N}$,
$$
\lim_{n \to \infty} \tau_n^r \E\left[\left(\Psi_n^{(a,w)}(A)\right)_{r} \right] = \big(M^{(r)}\big)^2 \Lambda(A)^r,
$$
with $M^{(r)}$ as in Lemma \ref{lem:recursion_Mr}, and where $\tau_n$ and $\Lambda$ are as defined in \eqref{eq:def_tau_n} and \eqref{eq:def_Lambda}, respectively.
\end{lemma}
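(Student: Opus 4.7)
The plan is to sandwich the factorial moment between an easy upper bound and the same bound minus a negligible defect. Define
\begin{equation*}
S_n(A) := \E\!\!\sum_{(p_1,\ldots,p_r)\in(\mathcal{P}_1^n)^r_{\neq}}\!\!\!\mathbf{1}\{p_1,\ldots,p_r\text{ pairwise non-crossing}\}\prod_{i=1}^r \mathbf{1}_A\!\Bigl(\tfrac{H(p_i)-\gamma\log n}{\sqrt{\beta\log n}},\,L(p_i)-\varrho_n\Bigr).
\end{equation*}
Because $A\subseteq(-\infty,a]\times(-\infty,w]$, any path $p_i$ whose point falls in $A$ automatically satisfies $H(p_i)\le\sqrt{\beta\log n}\,a+\gamma\log n$ and $L(p_i)\le\alpha^{-1}\log n+w$, which are exactly the bounds appearing in the definition of $(a,w)$-uncrossedness. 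Hence in any $r$-tuple of distinct $(a,w)$-uncrossed paths with points in $A$, the $(a,w)$-uncrossedness of $p_i$ forbids each of the other $p_j$ from crossing $p_i$, so such a tuple is pairwise non-crossing; this yields $\E[(\Psi_n^{(a,w)}(A))_r]\le S_n(A)$. Conversely, the defect $S_n(A)-\E[(\Psi_n^{(a,w)}(A))_r]$ counts pairwise non-crossing $r$-tuples with points in $A$ for which some $p_i$ is crossed by an extra path $\tilde p$ satisfying the $(a,w)$-bounds, and after dropping the point-in-$A$ restriction in favour of the weaker $(a,w)$-bounds this is majorised by the expression in Lemma \ref{lem:contribution_crossed} with $z=a$, $u=w$, which tends to zero. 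Since $\tau_n$ stays bounded ($\tau_n\equiv 1$ in the non-arithmetic case and $\tau_n\in[1,e^{\alpha M})$ in the arithmetic case via $\varrho_n\in(\alpha^{-1}\log n-M,\alpha^{-1}\log n]$), multiplication by $\tau_n^r$ preserves this, so $\tau_n^r(S_n(A)-\E[(\Psi_n^{(a,w)}(A))_r])\to 0$.

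It therefore suffices to show $\tau_n^r S_n(A)\to(M^{(r)})^2\Lambda(A)^r$, and for this I would exploit multilinearity in $A$. After rewriting $A\in\mathcal{U}$ as a finite disjoint union of rectangles contained in $(-\infty,a]\times(-\infty,w]$, two-dimensional inclusion-exclusion represents $\mathbf{1}_A$ as a signed combination $\mathbf{1}_A=\sum_\ell s_\ell\,\mathbf{1}_{H_\ell}$ with $s_\ell\in\{-1,+1\}$ and $H_\ell=(-\infty,y_\ell^{(1)}]\times(-\infty,y_\ell^{(2)}]$ satisfying $y_\ell^{(1)}\le a$, $y_\ell^{(2)}\le w$. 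Expanding the product of indicators gives
\begin{equation*}
S_n(A)=\sum_{(\ell_1,\ldots,\ell_r)}\Bigl(\prod_{i=1}^r s_{\ell_i}\Bigr)\,\E\!\!\sum_{(p_1,\ldots,p_r)\in(\mathcal{P}_1^n)^r_{\neq}}\!\!\!\mathbf{1}\{\text{non-crossing}\}\prod_{i=1}^r\mathbf{1}_{H_{\ell_i}}(\text{point of }p_i),
\end{equation*}
and Lemma \ref{lem:not_crossing} identifies the limit of $\tau_n^r$ times each inner summand as $(M^{(r)})^2\prod_{i=1}^r\Lambda(H_{\ell_i})$. Collecting the signed contributions and factorising yields
\begin{equation*}
\lim_{n\to\infty}\tau_n^r S_n(A)=(M^{(r)})^2\prod_{i=1}^r\Bigl(\sum_{\ell_i}s_{\ell_i}\Lambda(H_{\ell_i})\Bigr)=(M^{(r)})^2\Lambda(A)^r,
\end{equation*}
where the last equality uses that integrating $\mathbf{1}_A=\sum_\ell s_\ell\mathbf{1}_{H_\ell}$ against $\Lambda$ produces $\Lambda(A)$.

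The main subtlety I anticipate is that the $(a,w)$-uncrossedness condition and the set indicator interact nonlinearly, so one cannot apply inclusion-exclusion directly to $\E[(\Psi_n^{(a,w)}(A))_r]$; the sandwich must be performed first, converting the problem to non-crossing tuples, and only then can the $\mathbf{1}_A$ decomposition into half-infinite indicators be invoked. Fortunately the correction term from Lemma \ref{lem:contribution_crossed} depends only on the enclosing bounds $(a,w)$ and not on the internal structure of $A$, which allows the two steps to separate cleanly: Lemma \ref{lem:contribution_crossed} handles the removal of $(a,w)$-uncrossedness, while Lemma \ref{lem:not_crossing} together with the signed expansion evaluates the surviving non-crossing sum.
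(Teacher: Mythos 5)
Your proof is correct and follows essentially the same route as the paper: both rely on Lemma \ref{lem:not_crossing} to evaluate the pairwise non-crossing sum on half-infinite corners and on Lemma \ref{lem:contribution_crossed} to control the gap between ``pairwise non-crossing'' and ``$(a,w)$-uncrossed'', with a multilinear inclusion--exclusion over $\mathcal{U}$ handling general $A$. The only difference is the order of operations---you sandwich first and then apply inclusion--exclusion to the non-crossing quantity $S_n(A)$, whereas the paper decomposes $A$ into half-infinite corners first and then sandwiches each piece---and this reordering makes no substantive difference.
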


\begin{proof}
Note that the assertion follows from
$$
\lim_{n\to\infty} \tau_n^r \E \sum_{(x_1,\hdots,x_r)\in(\Psi^{(a,w)}_n)^r_{\neq}} \mathbf{1}\{ x_1\in A_1,\hdots,x_r\in A_r \} = \big(M^{(r)}\big)^2 \prod_{j=1}^r \Lambda(A_j)
$$
for $A_1,\hdots,A_r\in\mathcal{U}$. Since each of the sets $A_1,\hdots,A_r$ can be written as a disjoint union of Cartesian products of the form $(a^{(1)},b^{(1)}]\times (a^{(2)},b^{(2)}]$ with $a^{(1)}<a^{(2)}\leq a$ and $b^{(1)}<b^{(2)}\leq w$, this is equivalent to
$$
\lim_{n\to\infty} \tau_n^r  \E \sum_{(x_1,\hdots,x_r)\in(\Psi^{(a,w)}_n)^r_{\neq}} \mathbf{1}\{ x_1\in Q_1,\hdots,x_r\in Q_r \} = \big(M^{(r)}\big)^2 \prod_{j=1}^r \Lambda(Q_j)
$$
for all $Q_1,\hdots,Q_r$ such that $Q_i=(\underline{z}_i,\overline{z}_i]\times (\underline{u}_i,\overline{u}_i]$ with $\underline{z}_i<\overline{z}_i\leq a$ and $\underline{u}_i<\overline{u}_i\leq w$ for $i\in\{1,\hdots,r\}$. The latter follows from
\begin{align*}
& \lim_{n\to\infty} \tau_n^r  \E \sum_{(x_1,\hdots,x_r)\in(\Psi^{(a,w)}_n)^r_{\neq}} \mathbf{1}\{ x_1\in (-\infty,z_1]\times(-\infty,u_1],\hdots,x_r\in (-\infty,z_r]\times (-\infty,u_r] \} \\
& = \big(M^{(r)}\big)^2 \prod_{j=1}^r \Lambda((-\infty,z_j]\times(-\infty,u_j])
\end{align*}
for $z_1,\hdots,z_r\in (-\infty,a]$ and $u_1,\hdots,u_r\in (-\infty,w]$, which we show in the sequel. Because of
\begin{align*}
&  \sum_{(x_1,\hdots,x_r)\in(\Psi^{(a,w)}_n)^r_{\neq}} \mathbf{1}\{ x_1\in (-\infty,z_1]\times(-\infty,u_1],\hdots,x_r\in (-\infty,z_r]\times (-\infty,u_r] \} \\
& = \sum_{(p_1,\hdots,p_r)\in (\mathcal{P}_1^n)^r_{\neq}} \mathbf{1}\{ p_1,\hdots,p_r \text{ do not cross and are $(a,w)$-uncrossed}  \} \\
& \hspace{2.5cm} \times \mathbf{1}\{ H(p_i)\leq k_n(z_i) \text{ and } L(p_i)\leq \varrho_n+ u_i \text{ for } i\in[r]\},
\end{align*}
we obtain
\begin{align*}
& \bigg| \tau_n^r \E \sum_{(x_1,\hdots,x_r)\in(\Psi^{(a,w)}_n)^r_{\neq}} \mathbf{1}\{ x_1\in (-\infty,z_1]\times(-\infty,u_1],\hdots,x_r\in (-\infty,z_r]\times (-\infty,u_r] \} \\
& \quad - \tau_n^r  \E \sum_{(p_1,\hdots,p_r)\in (\mathcal{P}_1^n)^r_{\neq}} \mathbf{1}\{ p_1,\hdots,p_r \text{ do not cross}  \} \\
& \hspace{2.5cm} \times \mathbf{1}\{ H(p_i)\leq k_n(z_i) \text{ and } L(p_i)\leq \varrho_n+ u_i \text{ for } i\in[r]\}\bigg| \\
& \leq \tau_n^r  \E \sum_{(p_1,\hdots,p_r)\in (\mathcal{P}_1^n)^r_{\neq}} \mathbf{1}\{ p_1,\hdots,p_r \text{ do not cross and one of them is not $(a,w)$-uncrossed}  \} \\
& \hspace{2.5cm} \times \mathbf{1}\{ H(p_i)\leq k_n(z_i) \text{ and } L(p_i)\leq \varrho_n+ u_i \text{ for } i\in[r]\} \\
& \leq \tau_n^r  \E \sum_{(p_1,\hdots,p_r,\tilde{p})\in (\mathcal{P}_1^n)^{r+1}_{\neq}} \mathbf{1}\{ p_1,\hdots,p_r \text{ do not cross, $\tilde{p}$ crosses } p_1,\hdots,p_r \} \\
& \hspace{2.5cm} \times \mathbf{1}\left\{ H(p_1),\hdots,H(p_r),H(\tilde{p})\leq k_n(a), L(p_1),\hdots,L(p_r),L(\tilde{p})\leq \varrho_n+ w \right\}.
\end{align*}
Since $\tau_n^r$ is bounded and the expectation on the right-hand side vanishes by Lemma \ref{lem:contribution_crossed} as $n\to\infty$, Lemma \ref{lem:not_crossing} yields
\begin{align*}
& \lim_{n\to\infty} \tau_n^r  \E \sum_{(x_1,\hdots,x_r)\in(\Psi^{(a,w)}_n)^r_{\neq}} \mathbf{1}\{ x_1\in (-\infty,z_1]\times(-\infty,u_1],\hdots,x_r\in (-\infty,z_r]\times (-\infty,u_r] \} \\
& = \big( M^{(r)} \big)^2 \prod_{i=1}^r \Lambda((-\infty,z_i]\times(-\infty,u_i]),
\end{align*}
which concludes the proof.
\end{proof}

\subsection{Proof of Theorem \ref{thm:main}}\label{sec:proof_completion}

Now it only remains to combine the findings of the previous subsections to derive the desired point process convergences via the method of moments.

\begin{proof}[Proof of Theorem \ref{thm:main}]
Throughout the proof we consider parts a) and b) simultaneously. We let $\Psi_{\mathbb{R}^2}\sim\operatorname{Poisson}(W_1W_2\mathbb{P}_N\otimes K_{\mathbb{R}})$ and $\Psi_{\vartheta,\R\times M\mathbb{Z}}\sim\operatorname{Poisson}(W_1W_2e^{\alpha\vartheta}\mathbb{P}_N\otimes K_{M\mathbb{Z}})$. We first prove that, for any $A\in\mathcal{U}$,
\begin{equation}\label{eqn:convergence_Psi_n}
\Psi_n(A)\overset{d}{\longrightarrow} \Psi_{\R^2}(A) \quad \text{as} \quad n\to\infty \quad \text{and} \quad \Psi_{n_m}(A)\overset{d}{\longrightarrow} \Psi_{\vartheta,\R\times M\mathbb{Z}}(A) \quad \text{as} \quad m\to\infty
\end{equation}
in the non-arithmetic and in the arithmetic cases, respectively. We choose $a,w\in\mathbb{R}$ such that $A\subseteq (-\infty,a]\times(-\infty,w]$. From Lemma \ref{lem:factorial_moments_prelimit_via_Mr}, Corollary \ref{cor:W_M_r} and Lemma \ref{lem:factorial_moments_limit} with $\Lambda=\mathbb{P}_N\otimes K_{\mathbb{R}}$ we obtain that
\begin{align*}
\lim_{n\to\infty} \E[ (\Psi_n^{(a,w)}(A))_r] & = (M^{(r)})^2 ((\mathbb{P}_N\otimes K_{\mathbb{R}})(A))^r \\
& = (\mathbb{E}[W^r])^2 ((\mathbb{P}_N\otimes K_{\mathbb{R}})(A))^r = \E[ (\Psi_{\R^2}(A))_r]
\end{align*}
for the non-arithmetic case. For the arithmetic case the same findings as well as
$$
\lim_{m\to\infty} \tau_{n_m}^r = \lim_{m\to\infty} n_m^r e^{-r\alpha\varrho_{n_m}} = \lim_{m\to\infty} e^{-r\alpha \left(\varrho_{n_m}-\frac{1}{\alpha}\log(n_m)\right)} = e^{-r\alpha\vartheta}
$$
imply that
\begin{align*}
\lim_{m\to\infty} \E[ (\Psi_{n_m}^{(a,w)}(A))_r] & = \lim_{m\to\infty} \frac{1}{\tau^r_{n_m}} (M^{(r)})^2 ((\mathbb{P}_N\otimes K_{M\mathbb{Z}})(A))^r \\
& = (\mathbb{E}[W^r])^2 (e^{\alpha\vartheta}(\mathbb{P}_N\otimes K_{M\mathbb{Z}})(A))^r = \E[ (\Psi_{\vartheta,\R\times M\mathbb{Z}}(A))_r].
\end{align*}
Since the distributions of $\Psi_{\R^2}(A)$ and $\Psi_{\vartheta,\R\times M\mathbb{Z}}(A)$ are uniquely determined by their moments (see Lemma \ref{lem:moments_define_distribution}), the method of moments yields
\begin{equation}\label{eqn:convergence_approximation}
\Psi_n^{(a,w)}(A)\overset{d}{\longrightarrow} \Psi_{\R^2}(A) \quad \text{as} \quad n\to\infty \quad \text{and} \quad \Psi_{n_m}^{(a,w)}(A)\overset{d}{\longrightarrow} \Psi_{\vartheta,\R\times M\mathbb{Z}}(A) \quad \text{as} \quad m\to\infty
\end{equation}
for the non-arithmetic and the arithmetic cases, respectively. From Lemma \ref{lem:good_paths} we know that
$$
\lim_{n\to\infty}  \E\big[|\Psi_n(A) - \Psi_n^{(a,w)}(A)|\big]  = 0 \quad \text{and} \quad \lim_{m\to\infty}  \E\big[|\Psi_{n_m}(A) - \Psi_{n_m}^{(a,w)}(A)|\big]  = 0,
$$
respectively. This means that $(\Psi_n(A))_{n\in\mathbb{N}}$ and $(\Psi_{n_m}(A))_{m\in\mathbb{N}}$ must have the same limiting distributions as $(\Psi_n^{(a,w)}(A))_{n\in\mathbb{N}}$ and $(\Psi_{n_m}^{(a,w)}(A))_{m\in\mathbb{N}}$. Thus, \eqref{eqn:convergence_approximation} implies \eqref{eqn:convergence_Psi_n}.

By Theorem 23.25 of \cite{k21}, choosing $\mathcal{U}$ as semi-ring and dissecting ring, the one-dimensional convergence in distributions for the numbers of points in sets from $\mathcal{U}$ in \eqref{eqn:convergence_Psi_n} is sufficient for convergence in distribution of the point processes. This completes the proof.
\end{proof}

\subsection{Proof of Theorem \ref{thm:shortest_path}} \label{sec:proof_shortest_path}

We denote by $\mathbf{N}_{\mathbb{R}^2}$ the set of locally finite counting measures on $\mathbb{R}^2$, which we can interpret as point configurations. We let $\mathbf{N}_{\mathbb{R}^2}$ be equipped with the vague topology induced by all maps $\mathbf{N}_{\mathbb{R}^2}\ni \chi \mapsto \int f \, \dint \chi$, where $f:\mathbb{R}^2\to\mathbb{R}$ is a non-negative continuous function with bounded support. This means that a sequence $(\chi_n)_{n\in\mathbb{N}}$ in $\mathbf{N}_{\mathbb{R}^2}$ converges to $\chi\in\mathbf{N}_{\mathbb{R}^2}$ if and only if
$$
\lim_{n\to\infty} \int f \, \dint \chi_n = \int f \, \dint \chi
$$
for all non-negative continuous functions $f:\mathbb{R}^2\to\mathbb{R}$ with bounded support (see e.g.\ \cite[Chapter 23]{k21}). Our goal is to derive Theorem \ref{thm:shortest_path} from Theorem \ref{thm:main} via the continuous mapping theorem. However, the functions of $\Psi_n$ we are interested in are not continuous with respect to the vague topology. For example, we have that $\chi_n=\delta_{(0,0)} + \delta_{(1,-n)}$ converges to $\chi=\delta_{(0,0)}$ as $n\to\infty$, but the smallest second coordinate of $\chi_n$ is $-n$, which does  not converge to $0$, the smallest second coordinate of $\chi$. In order to circumvent such problems, we use some approximation and truncation arguments in the following proof. 

\begin{proof}[Proof of Theorem \ref{thm:shortest_path}]
For $\chi\in \mathbf{N}_{\mathbb{R}^2}$ we define 
$$
l_{\min}(\chi) = \inf_{(x_1,x_2)\in\chi} x_2,
$$
i.e., $l_{\min}(\chi)$ is the smallest second coordinate of a point of $\chi$. Here we use the conventions that $l_{\min}(\chi)=-\infty$ if the second coordinates are unbounded from below and that $l_{\min}(\varnothing)=\infty$. Moreover, we let $p_{\min}(\chi)$ be the number of points of $\chi$ whose second coordinate equals $l_{\min}(\chi)$. Note that $p_{\min}(\chi)=0$ if $l_{\min}(\chi)\in\{-\infty,\infty\}$. We denote by $h(\chi)$ the first coordinates of the points whose second coordinate is $l_{\min}(\chi)$ and think of them as a vector in $\mathbb{R}^{l_{\min}(\chi)}$ ordered by size starting with the smallest element. We put $h(\chi)=\varnothing$ if $p_{\min}(\chi)=0$.

For $u>0$ we define $Q(u)=[-u^2,u^2]\times[-u,u]$ and write $\chi|_{Q(u)}$ for the restriction of $\chi\in\mathbf{N}_{\mathbb{R}^2}$ to $Q(u)$. In the arithmetic case we always assume that $u$ is not a multiple of the span $M$ to ensure that the limiting Cox process does not have points on the boundary of $Q(u)$ almost surely. We have that
\begin{align*}
& \mathbb{P}\left( (l_{\min}(\Psi_n),p_{\min}(\Psi_n),h(\Psi_n))\neq (l_{\min}(\Psi_n|_{Q(u)}),p_{\min}(\Psi_n|_{Q(u)}),h(\Psi_n|_{Q(u)})) \right) \\
& \leq \mathbb{P}(\Psi_n(\mathbb{R}\times (-\infty,u]\setminus Q(u))>0) + \mathbb{P}(\Psi_n(Q(u))=0, \Psi_n\neq\varnothing).
\end{align*}
This statement holds since the functions of the restricted and unrestricted point process coincide if $\Psi_n(Q(u))>0$ and $\Psi_n(\mathbb{R}\times (-\infty,u]\setminus Q(u))=0$ or if $\Psi_n=\varnothing$. From \eqref{eqn:limit_first_moment_II} in Lemma \ref{lem:first_moment} we obtain that
\begin{align*}
\limsup_{n\to\infty} \mathbb{P}(\Psi_n(\mathbb{R}\times (-\infty,u]\setminus Q(u))>0) & \leq \limsup_{n\to\infty} \mathbb{E}[\Psi_n(\mathbb{R}\times (-\infty,u]\setminus Q(u))] \\
& = \limsup_{n\to\infty} \frac{1}{\tau_n} \Lambda(\mathbb{R}\times (-\infty,u]\setminus Q(u)),
\end{align*}
where the limit superior of $(1/\tau_n)_{n\in\mathbb{N}}$ is bounded. As in the proof of Theorem \ref{thm:main} we let $\Psi_{\mathbb{R}^2}\sim\operatorname{Poisson}(W_1W_2\Lambda)$ and $\Psi_{\vartheta,\R\times M\mathbb{Z}}\sim\operatorname{Poisson}(W_1W_2e^{\alpha\vartheta}\Lambda)$. In the non-arithmetic case we have by Theorem \ref{thm:main} and \eqref{eqn:probability_empty_point_process} that
\begin{align*}
\lim_{n\to\infty} \mathbb{P}(\Psi_n(Q(u))=0, \Psi_n\neq\varnothing) & = \lim_{n\to\infty} \mathbb{P}(\Psi_n(Q(u))=0) - \mathbb{P}(\Psi_n=\varnothing) \\
& = \mathbb{P}(\Psi_{\mathbb{R}^2}(Q(u))=0) - \mathbb{P}(W_1,W_2=0) \\
& = \mathbb{E}\left[ \exp\left( - W_1W_2 \Lambda(Q(u)) \right) \right] - \mathbb{P}(W_1,W_2=0) \\
& = \mathbb{E}\left[ \mathbf{1}\{W_1,W_2>0\} \exp\left( - W_1W_2 \Lambda(Q(u)) \right) \right].
\end{align*}
Similarly we obtain for the arithmetic case that
$$
\lim_{m\to\infty} \mathbb{P}(\Psi_{n_m}(Q(u))=0, \Psi_{n_m}\neq\varnothing) = \mathbb{E}\left[ \mathbf{1}\{W_1,W_2>0\} \exp\left( - W_1W_2 e^{\alpha\vartheta} \Lambda(Q(u)) \right) \right].
$$
For $\Psi\in\{\Psi_{\mathbb{R}^2},\Psi_{\vartheta,\R\times M\mathbb{Z}}\}$ we have as for $\Psi_n$ above
\begin{align*}
& \mathbb{P}\left( (l_{\min}(\Psi),p_{\min}(\Psi),h(\Psi))\neq (l_{\min}(\Psi|_{Q(u)}),p_{\min}(\Psi|_{Q(u)}),h(\Psi|_{Q(u)})) \right) \\
& \leq \mathbb{P}(\Psi(\mathbb{R}\times (-\infty,u]\setminus Q(u))>0) + \mathbb{P}(\Psi(Q(u))=0, \Psi\neq\varnothing) \\
& = \mathbb{E}\left[ 1 - \exp\left( W_1W_2c_{\Lambda}\Lambda(\mathbb{R}\times (-\infty,u]\setminus Q(u))\right)\right]\\
&\quad + \mathbb{E}\left[ \mathbf{1}\{W_1,W_2>0\} \exp\left( - c_{\Lambda} W_1W_2 \Lambda(Q(u)) \right) \right]
\end{align*}
with $c_{\Lambda}=1$ for $\Psi=\Psi_{\mathbb{R}^2}$ and $c_{\Lambda}=e^{\alpha\vartheta}$ for $\Psi=\Psi_{\vartheta,\R\times M\mathbb{Z}}$. Together with the observations that $\Lambda(\mathbb{R}\times (-\infty,u]\setminus Q(u))\to 0$ and $\Lambda(Q(u))\to\infty$ as $u\to\infty$, the previous inequalities show that
\begin{align}\label{eqn:truncation_Psi_n}
\nonumber\lim_{u\to\infty} \limsup_{n\to\infty} \mathbb{P}\big( & (l_{\min}(\Psi_n),p_{\min}(\Psi_n),h(\Psi_n))\\ & \neq (l_{\min}(\Psi_n|_{Q(u)}),p_{\min}(\Psi_n|_{Q(u)}),h(\Psi_n|_{Q(u)})) \big)
= 0
\end{align}
in the non-arithmetic case,
\begin{align*}
\lim_{u\to\infty} \limsup_{m\to\infty} \mathbb{P}( & (l_{\min}(\Psi_{n_m}),p_{\min}(\Psi_{n_m}),h(\Psi_{n_m})) \\ & \neq (l_{\min}(\Psi_{n_m}|_{Q(u)}),p_{\min}(\Psi_{n_m}|_{Q(u)}),h(\Psi_{n_m}|_{Q(u)})) ) =0
\end{align*}
in the arithmetic case, and
\begin{equation}\label{eqn:truncation_Psi}
\lim_{u\to\infty} \mathbb{P}\left( (l_{\min}(\Psi),p_{\min}(\Psi),h(\Psi))\neq (l_{\min}(\Psi|_{Q(u)}),p_{\min}(\Psi|_{Q(u)}),h(\Psi|_{Q(u)})) \right)=0.
\end{equation}

Let $u>0$ be fixed. The maps $\chi\mapsto l_{\min}(\chi|_{Q(u)})$, $\chi\mapsto p_{\min}(\chi|_{Q(u)})$ and $\chi\mapsto h(\chi|_{Q(u)})$ are continuous in all $\chi\in\mathbf{N}_{\mathbb{R}^2}$ such that $\chi(\partial Q(u))=0$ and no two points of $\chi$ in $Q(u)$ have the same second coordinate. Since the latter conditions are almost surely satisfied for $\Psi_{\mathbb{R}^2}$, Theorem \ref{thm:main} a) and a generalisation of the continuous mapping theorem (see e.g.\ \cite[Theorem 5.27]{k21}) imply that
\begin{multline*}
(l_{\min}(\Psi_n|_{Q(u)}),p_{\min}(\Psi_n|_{Q(u)}),h(\Psi_n|_{Q(u)}))\\
 \overset{d}{\longrightarrow} (l_{\min}(\Psi_{\mathbb{R}^2}|_{Q(u)}),p_{\min}(\Psi_{\mathbb{R}^2}|_{Q(u)}),h(\Psi_{\mathbb{R}^2}|_{Q(u)}))
\end{multline*}
as $n\to\infty$. Together with \eqref{eqn:truncation_Psi_n} and \eqref{eqn:truncation_Psi}, we obtain
$$
(l_{\min}(\Psi_n),p_{\min}(\Psi_n),h(\Psi_n)) \overset{d}{\longrightarrow} (l_{\min}(\Psi_{\mathbb{R}^2}),p_{\min}(\Psi_{\mathbb{R}^2}),h(\Psi_{\mathbb{R}^2})) \quad \text{as} \quad n\to\infty.
$$
Now the observations that $p_{\min}(\Psi_{\mathbb{R}^2})\in\{0,1\}$ almost surely and that the point of $\Psi_{\mathbb{R}^2}$ with the smallest second coordinate is distributed as in the statement complete the proof of part a).

For the proof of part b) we define $\chi\mapsto \tilde{h}(\chi|_{Q(u)})$ as the first coordinates of all points of $\chi|_{Q(u)}$ such that the second coordinate belongs to $[l_{\min}(\chi),l_{\min}(\chi)+M/3]$. These points are ordered according to size starting with the smallest one. Since the second coordinates of $\Psi_{\vartheta,\R\times M\mathbb{Z}}$ and of $\Psi_n$ for $n\in\mathbb{N}$ are concentrated on multiples of $M$, we have $\tilde{h}(\Psi_n)=h(\Psi_n|_{Q(u)})$ for $n\in\mathbb{N}$ and $\tilde{h}(\Psi_{\vartheta,\R\times M\mathbb{Z}})=h(\Psi_{\vartheta,\R\times M\mathbb{Z}}|_{Q(u)})$. Since $\Psi_{\vartheta,\R\times M\mathbb{Z}}$ is almost surely a continuity point of $\tilde{h}$, we obtain as before from Theorem \ref{thm:main} b) and the continuous mapping theorem
\begin{align*}
& (l_{\min}(\Psi_{n_m}|_{Q(u)}),p_{\min}(\Psi_{n_m}|_{Q(u)}),h(\Psi_{n_m}|_{Q(u)}))\\
& \overset{d}{\longrightarrow} (l_{\min}(\Psi_{\vartheta,\R\times M\mathbb{Z}}|_{Q(u)}),p_{\min}(\Psi_{\mathbb{R}^2}|_{Q(u)}),h(\Psi_{\vartheta,\R\times M\mathbb{Z}}|_{Q(u)}))
\end{align*}
as $m\to\infty$, and
\begin{align*}
 (l_{\min}(\Psi_{n_m}),p_{\min}(\Psi_{n_m}),h(\Psi_{n_m}))
 \overset{d}{\longrightarrow} (l_{\min}(\Psi_{\vartheta,\R\times M\mathbb{Z}}),p_{\min}(\Psi_{\vartheta,\R\times M\mathbb{Z}}),h(\Psi_{\vartheta,\R\times M\mathbb{Z}})) 
\end{align*}
as $m\to\infty$. This implies part b) of the theorem.
\end{proof}

\subsection*{Acknowledgments}
The authors would like to thank the anonymous reviewers for their careful readings of an earlier version of this paper and for suggestions which improved the presentation of the work, as well as the editors for their comments.

\DeclareRobustCommand{\VAN}[3]{#3}
\bibliographystyle{abbrv}
\bibliography{FPPbiblio}

\end{document}